\author{Geoffrey Powell}
\title{On the $\finj$-homology of the injective cogenerators}
\date{}
\thanks{This work was partially supported by the ANR Project {\em ChroK}, {\tt ANR-16-CE40-0003}.}
\keywords{}
\subjclass[2000]{}
\newtheorem{THM}{Theorem}
\newtheorem{PROP}[THM]{Proposition}
\newtheorem{thm}{Theorem}[section]
\newtheorem{prop}[thm]{Proposition}
\newtheorem{cor}[thm]{Corollary}
\newtheorem{lem}[thm]{Lemma}
\theoremstyle{definition}
\newtheorem{defn}[thm]{Definition}
\newtheorem{exam}[thm]{Example}
\theoremstyle{remark}
\newtheorem{rem}[thm]{Remark}
\newtheorem{nota}[thm]{Notation}
\newtheorem{hyp}[thm]{Hypothesis}
\newtheorem{conj}[thm]{Conjecture}
\newcommand{\f}{\mathcal{F}}
\newcommand{\hs}[1]{\widehat{#1}}
\newcommand{\vstrip}[1]{\widetilde{#1}}
\newcommand{\crit}{\mathfrak{Crit}}
\newcommand{\M}{\mathfrak{M}}
\newcommand{\pair}{\mathfrak{P}}
\newcommand{\fbmod}{\mathcal{F}(\fb)}
\newcommand{\fimod}{\mathcal{F}(\finj)}
\newcommand{\conv}{\odot}
\newcommand{\kz}{\mathfrak{Kz}^\finj}
\newcommand{\ori}{\mathrm{Or}}
\newcommand{\hfi}{\hom_\finj}
\newcommand{\aut}{\mathrm{Aut}}
\newcommand{\fsets}{\mathrm{Set}^{\mathrm{f}}}
\newcommand{\vs}{\mathcal{V}_\kring}
\newcommand{\fvs}{\vs^{\mathrm{f}}}
\newcommand{\trans}{^{\mathrm{tr}}}
\renewcommand{\hom}{\mathrm{Hom}}
\newcommand{\sym}{\mathfrak{S}}
\newcommand{\nat}{\mathbb{N}}
\newcommand{\zed}{\mathbb{Z}}
\newcommand{\op}{^\mathrm{op}}
\newcommand{\ob}{\mathrm{Ob}\hspace{2pt}}
\newcommand{\kring}{\mathbbm{k}}
\newcommand{\dash}{\hspace{-2pt}-\hspace{-2pt}}
\newcommand{\modules}{\mathrm{mod}}
\newcommand{\fb}{{\bm{\Sigma}}}
\newcommand{\tr}{\mathrm{Tr}}
\newcommand{\tab}{\mathrm{Tab}}
\newcommand{\rev}{^{\mathrm{rev}}}
\newcommand{\tspec}{\mathbb{T}}
\newcommand{\coeff}{\theta}
\newcommand{\finj}{\mathbf{FI}}
\newcommand{\re}{r^{\epsilon}}
\numberwithin{equation}{section}
\begin{document}
\footnotetext{https://orcid.org/0000-0003-2564-1202}

\begin{abstract}
The purpose of this paper is to give information on the $\finj$-homology of the standard injective cogenerators of the category of $\finj$-modules, where $\finj$ is the category of finite sets and injections.

Working over a field $\kring$ of characteristic zero, a full calculation is given in homological degree zero and a conjectural description in higher homological degree.

The proof of the main theorem reduces to a calculation in representation theory of the symmetric groups, exploiting the Young orthonormal basis. 
\end{abstract}

\maketitle

\section{Introduction}

The category $\finj$ of finite sets and injections is a fundamental tool for relating representations of the symmetric groups, which appear as the automorphism groups of the objects of $\finj$. In particular, for any commutative, unital ring, one can consider $\f (\finj)$, the category of functors from $\finj$ to $\kring$-modules, often known as the category of $\finj$-modules. This provides a framework for studying phenomena such as representation stability, for example. In this paper,  $\kring$ is taken to be a field of characteristic zero, since the proof of the main results exploits the representation theory of the symmetric groups in characteristic zero. 

The category $\f (\finj)$ has enough projectives and enough injectives, which are provided by Yoneda's lemma. For $b \in \nat$, writing $\mathbf{b} := \{ 1, \ldots , b\}$, the associated standard projective generator  is $\kring \hfi (\mathbf{b}, -)$, the $\kring$-linearization of the set-valued functor $\hfi (\mathbf{b}, -)$. Correspondingly, one has the standard injective cogenerator given by $\kring ^{\hfi (-, \mathbf{b})}$, formed by taking set maps with values in $\kring$. Contrary to  the standard projectives, these are {\em torsion} $\finj$-modules; in particular, for  $a \in \nat$, and $\mathbf{a}:= \{1, \ldots , a \}$, $\kring ^{\hfi (-, \mathbf{b})}(\mathbf{a})=\kring ^{\hfi (\mathbf{a}, \mathbf{b})}$, which is zero  for $a>b$. 

For the purposes of this paper (with its application in \cite{P_wall} in view), it is convenient to describe the above injective cogenerator as 
\[
\kring \hfi (-,\mathbf{b})\trans : \finj \rightarrow \kring \dash \modules,
\]
where $\trans$ indicatesthe following `transpose' {\em covariant} structure:  $\kring \hfi (-, \mathbf{b})\trans(\mathbf{a})$ is the $\kring$-vector space with basis $\{ [f] \ | \  f \in \hfi (\mathbf{a}, \mathbf{b}) \}$. Then for $i : \mathbf{a} \hookrightarrow \mathbf{a'}$ a map in $\finj$:
\[
\kring \hfi (i, \mathbf{b}) [f] = \sum _{\substack{f' \in \hfi (\mathbf{a'}, \mathbf{b}) \\
f' \circ i = f }}[f'].
\]
There is more structure, namely one can take into account the action of $\sym_b:= \aut (\mathbf{b})$, so that the above is a functor from $\finj$ to $\kring\sym_b$-modules.  
 
The category $\fb$ of finite sets and bijections identifies  as the maximal subgroupoid of $\finj$. Functors from $\fb$ to $\kring$-vector spaces are denoted by $\f (\fb)$ and there is an extension by zero functor $\f (\fb) \rightarrow \f (\finj)$. This has left adjoint $H_0 ^\finj : \f (\finj) \rightarrow \f (\fb)$ and, by definition, $\finj$-homology 
$H_ * ^\finj$ is given by forming the left derived functors.   

Behaviour of $\finj$-homology on the standard projective is easily understood:
\[
\big( H^\finj_0 \kring \hfi ( \mathbf{b},-) \big) (\mathbf{a}) = 
\left\{
\begin{array}{ll}
\kring \sym_b & a=b \\
0 & \mbox{otherwise}.
\end{array}
\right.  
\]
Clearly higher $\finj$-homology of $\kring \hfi ( \mathbf{b},-)$ vanishes, since these are defined as left derived functors.

{\em A contrario}, the case of the standard injective cogenerators $ \kring \hfi (-,\mathbf{b})\trans $, for $b\in \nat$, is much more complicated.  In particular, $H_0^\finj  \kring \hfi (-,\mathbf{b})\trans $ is not in general supported on $\mathbf{b}$.

The main result of this paper, Theorem \ref{thm:coker}, calculates this $H_0^\finj$; in the statement, for a partition $\lambda$, $S^\lambda$ is the simple $\sym_{|\lambda|}$-module indexed by $\lambda$ (see Section \ref{sect:rep} for some recollections on the representation theory of the symmetric groups and references).

\begin{THM}
\label{THM1}
For $1 \leq a\leq b$, there is an isomorphism of $\sym_b\times \sym_a$-modules:
\[
 (H^\finj_0 \kring \hfi (-, \mathbf{b})\trans ) (\mathbf{a})
\cong 
\bigoplus_{\substack{\lambda \vdash b \\ \lambda_1 =b-a}}
S^\lambda \boxtimes S^{\hs{\lambda}},
\]
where $\hs{\lambda}$ is the partition obtained from $\lambda$ by removing $\lambda_1$.

In particular, this is multiplicity-free.
\end{THM}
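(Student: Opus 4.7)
The plan is to unfold the adjunction defining $H_0^\finj$: for $F\in\fimod$ and $a\geq 1$, $H_0^\finj F(\mathbf{a})$ equals $F(\mathbf{a})/\sum_{i:\mathbf{a-1}\hookrightarrow\mathbf{a}}\mathrm{Im}\,F(i)$, and $\sym_a$-equivariance lets one package the structure maps into a single morphism
\[
\Phi \,:\, \mathrm{Ind}_{\sym_{a-1}}^{\sym_a} F(\mathbf{a-1}) \longrightarrow F(\mathbf{a}).
\]
For $F = \kring\hfi(-,\mathbf{b})\trans$ this $\Phi$ is further $\sym_b$-equivariant, so the theorem reduces to computing the cokernel of an explicit $(\sym_b\times\sym_a)$-equivariant map. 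The target $\kring\hfi(\mathbf{a},\mathbf{b})$ is the transitive permutation module with stabiliser $\sym_{b-a}\times\sym_a$ (the second factor embedded diagonally in $\sym_b\times\sym_a$), and Pieri's rule yields the multiplicity-free decomposition
\[
\kring\hfi(\mathbf{a},\mathbf{b}) \;\cong\; \bigoplus_{\substack{\lambda \vdash b,\ \mu \vdash a\\ \lambda/\mu \text{ a horizontal strip}}} S^\lambda \boxtimes S^\mu.
\]
Combining Pieri and branching for $\sym_{a-1}\subset\sym_a$, the source of $\Phi$ then decomposes with $(\lambda,\mu)$-isotypic multiplicity equal to the number of removable boxes of $\mu$ lying in a column not occupied by $\lambda/\mu$.

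The combinatorial key is then the following. Using the horizontal strip condition $\mu_i\geq\lambda_{i+1}$, a corner of $\mu$ at row $i$ — which sits in column $\mu_i$ — lies in a column used by $\lambda/\mu$ if and only if $\mu_i=\lambda_{i+1}$; and a short descent shows that if this holds at every corner, then $\mu_i=\lambda_{i+1}$ for all $i$, forcing $\mu=\hs{\lambda}$ and hence $\lambda_1=b-a$. Conversely, a transpose calculation shows that $\lambda/\hs{\lambda}$ occupies all $\lambda_1$ columns of $\lambda$, so every corner of $\hs{\lambda}$ is automatically in a used column. Consequently the source of $\Phi$ contains no $(\lambda,\hs{\lambda})$-isotypic component — so these components survive unmolested in the cokernel — while every other admissible $(\lambda,\mu)$ appearing in the target does contribute at least one component to the source.

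The remaining, and genuinely hard, step is to show that $\Phi$ surjects onto each of these other $(\lambda,\mu)$-isotypes. By multiplicity-freeness of the target this reduces, via Schur's lemma, to exhibiting a single non-zero equivariant matrix coefficient of $\Phi$ per such isotype. This is where the characteristic zero hypothesis and the Young orthonormal basis enter: for a valid $\nu\subset\mu$ provided by the combinatorial step, one selects Young basis vectors of $S^\lambda$ and $S^\mu$ compatible with the branching inclusion $S^\nu\subset\mathrm{Res}^{\sym_a}_{\sym_{a-1}}S^\mu$, writes $\Phi$ on an explicit basis element, and verifies that its projection onto the $(\lambda,\mu)$-component of $\kring\hfi(\mathbf{a},\mathbf{b})$ is non-zero. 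Controlling this interaction between the set-theoretic transfer and the Jucys--Murphy eigenbasis is the main technical obstacle, and is the calculation in the representation theory of the symmetric groups to which the statement ultimately reduces.
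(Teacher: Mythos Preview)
Your proposal is correct and follows essentially the same route as the paper: identify $H_0^\finj$ with the cokernel of the induced map $\tr_{a,b}$, decompose source and target via Pieri to see that exactly the $(\lambda,\hs\lambda)$ components with $\lambda_1=b-a$ are absent from the source, and then reduce the remaining surjectivity to the non-vanishing of a single matrix coefficient per isotype using the Young orthonormal basis. The paper's Section~\ref{sect:calc} carries out precisely the step you flag as the main technical obstacle, by passing to the skew representation $S^{\lambda/\kappa}$, isolating an explicit coefficient $\coeff(\lambda,\nu,\kappa)$ of $\Phi_{\tspec}$, and proving it strictly positive via a recursive formula (Proposition~\ref{prop:inductive_theta}) that peels off the leftmost box of the horizontal strip.
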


This result has a striking aspect that can be paraphrased as stating that the above $\finj$-homology in homological degree zero is as small as it can be, given the structure of the representations involved (cf. Lemma \ref{lem:comp_factor_max_hor_strip}). That such a result should be true was suggested by calculations carried out by the author using Sage \cite{sagemath} in January 2022.

The starting point is the identification as representations 
\[
\kring \hfi (\mathbf{a}, \mathbf{b})\trans 
\cong 
\bigoplus_{\substack{\lambda \vdash b, \nu \vdash a \\ \hs{\lambda}\preceq \nu \preceq \lambda}}
S^\lambda \boxtimes S^{\nu}.
\]
Here the condition $\hs{\lambda}\preceq \nu \preceq \lambda$ is equivalent to the skew partition $\lambda/ \nu$ being a `horizontal strip' (i.e., having skew Young diagram with at most one box in each column). 

The proof of Theorem \ref{THM1} boils down to a property of the skew representation $S^{\lambda/\nu}$ when $\nu \neq \hs{\lambda}$, as is explained in Section \ref{subsect:first_reduct}. Since $\lambda /\nu$ is a horizontal strip, $S^{\lambda/\nu}$ is a permutation representation; the difficulty in proving the Theorem stems from the fact that the explicit element which intervenes is defined in terms of the Young orthonormal basis, which is {\em not} a permutation basis.  

This proof takes up most of Section \ref{sect:calc} and exploits elementary, brute force techniques. The author believes that there should be a better proof of this result and suspects that the result should already occur in some form in the literature.

Emboldened by Theorem \ref{THM1}, it is natural to consider the $\finj$-homology in higher homological degree. Proposition \ref{prop:subset_H_finj} gives the following lower bound for this (the notation used in the statement is introduced in Section \ref{sect:homology}):

\begin{PROP}
For $b \geq  a\in \nat$ and homological degree $n$, there is an inclusion of $\sym_b \times \sym_a $-modules:
\[
\bigoplus_{\substack{(\lambda, \mu) \in \crit (a,b) \\
|\mu/(\lambda \cap \mu)|=n }}
S^\lambda \boxtimes S^\mu 
\subset 
H_n ^\finj (\kring \hfi(-, \mathbf{b}) ) (\mathbf{a}).
\]
For $n=0$ this is an isomorphism.
\end{PROP}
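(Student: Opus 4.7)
My plan is to split the problem by the $\sym_b$-isotypic type and then exhibit explicit non-trivial homology classes. The decomposition $\kring\hfi(\mathbf{c},\mathbf{b})\trans\cong\bigoplus_{\hs{\lambda}\preceq\nu\preceq\lambda}S^\lambda\boxtimes S^\nu$ recorded in the introduction is natural in $\mathbf{c}\in\finj$, because the $\sym_b$-action (post-composition) commutes with the $\finj$-structure (pre-composition). Regrouping by $\lambda$ therefore yields a decomposition of $\sym_b$-equivariant $\finj$-modules $\kring\hfi(-,\mathbf{b})\trans\cong\bigoplus_{\lambda\vdash b}S^\lambda\boxtimes F_\lambda$, where $F_\lambda\in\fimod$ satisfies $F_\lambda(\mathbf{a})=\bigoplus_{\hs{\lambda}\preceq\nu\preceq\lambda,\,|\nu|=a}S^\nu$. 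Additivity and $\sym_b$-equivariance of $H_*^\finj$ then reduce the problem to producing, for each $\lambda\vdash b$ and each $\mu$ with $(\lambda,\mu)\in\crit(a,b)$ and $|\mu/(\lambda\cap\mu)|=n$, a canonical copy of $S^\mu$ inside $H_n^\finj F_\lambda(\mathbf{a})$.

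The $n=0$ case will then fall out of Theorem \ref{THM1}: the constraint $|\mu/(\lambda\cap\mu)|=0$ forces $\mu\subseteq\lambda$, and (by the expected definition of $\crit(a,b)$) the critical condition restricts the sum to the pairs $(\lambda,\hs{\lambda})$ with $\lambda_1=b-a$, matching exactly the description in Theorem \ref{THM1}. So once the general inclusion is established, the $n=0$ equality is automatic.

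For $n\geq 1$, I would compute $H_n^\finj F_\lambda(\mathbf{a})$ via a chosen projective resolution of $F_\lambda$ in $\fimod$; applying $H_0^\finj$ termwise yields a chain complex of $\fb$-modules whose differential is dictated by the horizontal-strip restriction maps intrinsic to $F_\lambda$. To each critical pair $(\lambda,\mu)$ with $|\mu/(\lambda\cap\mu)|=n$ I associate an explicit $n$-cycle built from the skew shape $\mu/(\lambda\cap\mu)$, which must itself be a horizontal strip of size $n$ for the construction to make sense; the homology class of this cycle spans a copy of $S^\mu$, and collecting these embeddings over critical pairs provides the required inclusion.

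The main obstacle is certifying that these cycles are non-trivial in homology and mutually independent across critical pairs. I expect this to be controlled by multiplicity considerations: since branching along a single box for symmetric-group representations is multiplicity-free, distinct critical pairs cannot contribute to the same $S^\mu$-isotypic component of a fixed chain degree, which rules out the only potential source of cancellation.
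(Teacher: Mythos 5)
Your decomposition by $\sym_b$-isotype and the $n=0$ reduction to Theorem \ref{THM1} (via Lemma \ref{lem:crit_max_horizontal}) are sound. However, for $n\geq 1$ there is a genuine gap: you propose to build an explicit $n$-cycle from the skew shape $\mu/(\lambda\cap\mu)$ and then certify its non-triviality by a ``branching is multiplicity-free'' argument, but that argument addresses the wrong question. The multiplicity-free branching rules out redundancy \emph{across} distinct critical pairs; it does not rule out the possibility that a single proposed cycle is a boundary. You never explain why your cycle is not in the image of the incoming differential, which is precisely the hard part.

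The paper's argument sidesteps this entirely by using a total-multiplicity count. It identifies, via Lemma \ref{lem:kz_n}/\ref{lem:M_kz}, the total multiplicity of $S^\lambda\boxtimes S^\mu$ across \emph{all} homological degrees of the Koszul complex $\kz_\bullet \kring\hfi(-,\mathbf{b})(\mathbf{a})$ as $|\M(\lambda,\mu)|$, and the whole point of criticality is that $|\M(\lambda,\mu)|=1$. Since the categories in play are semisimple, a simple factor occurring with total multiplicity one in a bounded chain complex necessarily survives to homology: the differentials into and out of it must vanish because the adjacent chain groups contain no copy of it. No explicit cycle needs to be constructed. This observation is essentially the whole proof of the inclusion, and it is missing from your proposal. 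You could salvage your approach by proving that $S^\mu$ occurs with total multiplicity one in the complex computing $H_*^\finj F_\lambda(\mathbf{a})$ (equivalent to $|\M(\lambda,\mu)|=1$), but as written your ``multiplicity considerations'' are not that statement.

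Two smaller points. First, you call $\mu/(\lambda\cap\mu)$ a horizontal strip; it is a \emph{vertical} strip (see Notation \ref{nota:pair} and Lemma \ref{lem:pieri}): the $\mathrm{sgn}_n$ twist coming from $\ori$ forces Pieri's rule to produce vertical strips on the $\mu$ side. Second, the paper uses the explicit Koszul complex $\kz_\bullet$, which is exact as a functor (not a projective resolution) but still computes the derived functors; your projective-resolution approach would also work in principle, but only after you have identified the composition factors in each degree, which is where Lemma \ref{lem:M_kz} again comes in.
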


An optimistic conjecture is that this Proposition actually gives the full $\finj$-homology. Since this is not the main thrust of the paper, the proposed strategy for establishing the conjecture is only outlined here.

The appendix, Section \ref{sect:kz_schur} revisits these structures using the Schur-Weyl correspondence, i.e., by passing to Schur (bi)functors. This allows the Koszul complex that calculates the $\finj$-homology to be made completely explicit (see Theorem \ref{thm:schur_koszul}) and also introduces further  structure (see Corollary \ref{cor:module}).  This is significant in the application in \cite{P_wall}, where it is encoded in the wall categories appearing there.

\subsection{Acknowledgement}
The author is grateful to Christine Vespa for comments which lead to the natural description of the main players.

\section{Ingredients}

The category of finite sets and injective maps is denoted $\finj$ and its maximal subgroupoid by $\fb$, which identifies with the category of finite sets and bijections. 

\begin{nota}
\ 
\begin{enumerate}
\item 
For $t \in \nat$, $\mathbf{t}$ denotes the finite set $\{ 1, \ldots, t \}$, which can be considered as an object of $\fb$ (and hence of $\finj$). By convention, $\mathbf{0}= \emptyset$.
\item 
For $s \leq t$, $\iota_{s,t} \in \hfi (\mathbf{s}, \mathbf{t})$ denotes the canonical inclusion
 $\{ 1, \ldots, s \} \subset \{ 1, \ldots, t \}$.
\end{enumerate}
\end{nota}

Morphisms in $\finj$ yield a functor 
\[
\hfi (-, -) \ : \ \finj\op \times \finj \rightarrow \fsets,
\]
where $\fsets$ is the category of finite sets. The $\kring$-linearization gives a functor $\kring \hfi (-,-)$ to  $\fvs \subset \vs$, the full subcategory of finite-dimensional $\kring$-vector spaces.

In addition,  if one restricts the second variable to  $\fb \subset \finj$, $\kring \hfi (-,-)$ has a {\em covariant} functoriality with respect to the first variable:

\begin{nota}
Denote by $\kring \hfi (-,-)\trans$ the functor $\finj \times \fb \rightarrow \fvs$ given by 
\[
\kring \hfi (-,-)\trans : (\mathbf{a}, \mathbf{b}) \mapsto \kring \hfi (\mathbf{a}, \mathbf{b})
\]
where, for  $i : \mathbf{a} \hookrightarrow \mathbf{a'}$ in $\finj$ and  $f \in \hfi (\mathbf{a}, \mathbf{b})$, $\kring \hfi (i , \mathbf{b}) \trans [f]$ is the sum 
$\sum [f']$, where $f' \in \hfi (\mathbf{a'}, \mathbf{b})$ runs over the set of injective maps such that $f' \circ i =f$. 
\end{nota}

Denote by $\f (\finj)$ the category of functors from $\finj$ to $\kring$-vector spaces. Then, for each $b \in \nat$, the functor $\kring^{\hfi (-, \mathbf{b})}$ is injective in $\f (\finj)$, by Yoneda's lemma. More explicitly, it corepresents the functor $F \mapsto F(\mathbf{b})^\sharp$, for $F \in \ob\f(\finj)$, where $^\sharp$ denotes vector space duality. It follows that $\{ \kring^{\hfi (-, \mathbf{b})} \ | \  b \in \nat \}$ is a set of injective cogenerators of $\f (\finj)$. 

\begin{prop}
\label{prop:iso_trans_inj}
For $b \in \nat$, 
\begin{enumerate}
\item 
$\kring \hfi (-,\mathbf{b})\trans$ is isomorphic to the injective cogenerator $\kring^{\hfi (-, \mathbf{b})}$ of $\f (\finj)$; 
\item 
$\kring \hfi (- , \mathbf{b}) \trans$ yields a functor 
$ 
\kring \hfi (- , \mathbf{b}) \trans
\ : \ 
\finj 
\rightarrow 
\kring \sym_b\dash\modules.
$
\end{enumerate} 
\end{prop}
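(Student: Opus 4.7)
The plan is to exhibit an explicit natural isomorphism, exploiting the fact that both sides have ``the same'' underlying basis. Evaluated at $\mathbf{a}$, the functor $\kring \hfi(-, \mathbf{b})\trans$ has basis $\{[f] : f \in \hfi(\mathbf{a}, \mathbf{b})\}$, while $\kring^{\hfi(-, \mathbf{b})}(\mathbf{a}) = \kring^{\hfi(\mathbf{a}, \mathbf{b})}$ carries the basis of characteristic functions $\{\delta_f : f \in \hfi(\mathbf{a}, \mathbf{b})\}$. I would propose the map $[f] \mapsto \delta_f$, which is a $\kring$-linear isomorphism at each object; the only real thing to check is naturality in the first variable.

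For this, one first unwinds the functoriality of $\kring^{\hfi(-, \mathbf{b})}$: given $i : \mathbf{a} \hookrightarrow \mathbf{a'}$, the contravariant functor $\hfi(-, \mathbf{b}) : \finj\op \to \fsets$ acts by precomposition $f' \mapsto f' \circ i$, and then the contravariant functor $\kring^{(-)}$ reverses direction again, yielding a \emph{covariant} action on $i$. Evaluated on $\delta_f$, this is the function $f' \mapsto \delta_f(f' \circ i)$ on $\hfi(\mathbf{a'}, \mathbf{b})$, which is precisely $\sum_{f' \circ i = f} \delta_{f'}$. This matches the defining formula for $\kring \hfi(i, \mathbf{b})\trans[f]$ term by term, giving naturality and hence part (1).

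For part (2), I would equip $\kring \hfi(\mathbf{a}, \mathbf{b})\trans$ with the $\sym_b$-action by postcomposition on the target, $\sigma \cdot [f] := [\sigma \circ f]$, and then verify that this commutes with the $\finj$-functoriality in the first argument. This is immediate: applying $\kring \hfi(i, \mathbf{b})\trans$ to $[\sigma \circ f]$ gives $\sum_{g' \circ i = \sigma \circ f} [g']$, and the reparametrization $g' = \sigma \circ f'$ (a bijection since $\sigma$ is invertible) identifies this with $\sigma \cdot \kring \hfi(i, \mathbf{b})\trans[f]$. There is no real obstacle: the statement is a bookkeeping check confirming that the transpose description correctly encodes the covariant functoriality of the injective cogenerator, while simultaneously registering the compatible $\sym_b$-action that will be needed later to extract isotypical components.
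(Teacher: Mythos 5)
Your proposal is correct and is essentially the paper's proof: the map $[f]\mapsto\delta_f$ is exactly the canonical isomorphism $\kring X\cong\kring^X$ used there, and the naturality and equivariance checks you spell out are what the paper leaves to the reader with ``corresponds across these isomorphisms'' and ``clear.''
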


\begin{proof}
The first statement follows by using the isomorphism of $\kring$-vector spaces 
\[
\kring X \cong \kring ^X, 
\]
for $X$ a finite set. This corresponds to the pairing $\kring X \otimes \kring X \rightarrow \kring $ sending $[x] \otimes [x']$ to $1$ if $x=x' \in X$ and zero otherwise; this is equivariant with respect to automorphisms of $X$, for the trivial action on $\kring$. The structure of $\kring^{\hfi (-, \mathbf{b})}$ corresponds to that of $\kring \hfi (-,\mathbf{b})\trans$ across these isomorphisms. 

The second statement is clear.
\end{proof}

\begin{rem}
The  description $\kring \hfi (-, \mathbf{b})\trans$ is preferred here, as opposed to  $\kring^{\hfi (-, \mathbf{b})}$, since it is generalized in \cite{P_wall} to a functor on a larger category (in which $b$ is allowed to vary), for which this formulation is more natural.
\end{rem}

For $0 < a \leq b \in \nat$, applying  $\kring \hfi (- , \mathbf{b}) \trans$ to $\iota_{a-1, a} \in \hfi (\mathbf{a-1},\mathbf{a})$ gives a 
$\sym_{a-1}\op \times \sym_b$-equivariant map
\begin{eqnarray}
\label{eqn:a-1_a}
\kring \hfi (\mathbf{a-1}, \mathbf{b} ) 
\rightarrow 
\kring \hfi (\mathbf{a}, \mathbf{b}) \downarrow^{\sym_a}_{\sym_{a-1}}.
\end{eqnarray}
where $\sym_{a-1} \subset \sym_a$ is induced by $\mathbf{a-1} \subset \mathbf{a}$.

As a $\sym_{a-1}\op \times \sym_b$-module, $\kring \hfi (\mathbf{a-1}, \mathbf{b} )$ is generated by $[\iota_{a-1,b}]$. The above map is thus determined by the image of $[\iota_{a-1,b}]$.  From the definition of $\kring \hfi (-,-)\trans$, 
one checks that the following holds:

\begin{lem}
\label{lem:iota_a-1_a}
For $0 < a \leq b \in \nat$, the morphism $\kring \hfi (\iota_{a-1,a}, \mathbf{b})$ is determined as a morphism of $\sym_{a-1}\op \times \sym_b$-modules by: 
\[
[\iota_{a-1,b}] 
\mapsto 
\sum_{z \in \mathbf{b} \backslash \mathbf{a-1}}(a,z)[\iota_{a,b}],
\] 
where the transposition $(a,z)\in \sym_b$ acts via the $\sym_b$-action on $\kring \hfi(\mathbf{a}, \mathbf{b})$.
\end{lem}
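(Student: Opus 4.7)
The plan is to unfold the definition of $\kring \hfi(-, \mathbf{b})\trans$ applied to the morphism $\iota_{a-1,a}$, and to identify the resulting sum as the action of a sum of transpositions.

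First, I would observe that by the defining formula,
\[
\kring \hfi (\iota_{a-1, a}, \mathbf{b})\trans [\iota_{a-1, b}] = \sum_{f'} [f'],
\]
where the sum runs over the set of injections $f' \in \hfi(\mathbf{a}, \mathbf{b})$ satisfying $f' \circ \iota_{a-1,a} = \iota_{a-1, b}$. The condition on $f'$ says exactly that $f'(k) = k$ for all $k \in \mathbf{a-1}$. Since $f'$ is injective, its remaining value $f'(a) = z$ must lie in $\mathbf{b} \setminus \mathbf{a-1}$, and conversely each choice of such $z$ gives a unique admissible $f'$. Hence the sum is naturally indexed by $z \in \mathbf{b} \setminus \mathbf{a-1}$.

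Next, I would identify the $f'$ corresponding to $z$ with $(a,z) \circ \iota_{a,b}$, viewed as an element of $\hfi(\mathbf{a}, \mathbf{b})$ via post-composition by the transposition $(a,z) \in \sym_b$. Indeed, since $z \geq a$, the transposition $(a,z)$ fixes every $k < a$, so $(a,z) \circ \iota_{a,b}$ sends $k < a$ to $k$ and sends $a$ to $z$, which matches $f'$. Under the convention that $\sym_b$ acts on $\kring \hfi(\mathbf{a}, \mathbf{b})$ by post-composition, this translates to $[f'] = (a, z)[\iota_{a,b}]$.

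Summing over $z \in \mathbf{b} \setminus \mathbf{a-1}$ yields the asserted formula. The $\sym_{a-1}\op \times \sym_b$-equivariance of $\kring \hfi (\iota_{a-1, a}, \mathbf{b})\trans$, which was already noted in (\ref{eqn:a-1_a}), together with the fact that $[\iota_{a-1, b}]$ generates $\kring \hfi(\mathbf{a-1}, \mathbf{b})$ as a $\sym_{a-1}\op \times \sym_b$-module, ensures that this value on the generator determines the whole map. There is no genuine obstacle here: the statement is essentially a repackaging of the definition of the transpose covariant structure, the only mild care being the passage from the set-theoretic description of admissible $f'$ to the recognition of each such $f'$ as a transposition acting on the canonical inclusion $\iota_{a,b}$.
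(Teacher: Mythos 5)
Your proof is correct and matches the paper's (implicit) approach: the paper simply asserts the lemma as a routine consequence of the definition of $\kring \hfi(-,-)\trans$, and your write-up is precisely the verification that is being elided — identifying the admissible $f'$, parametrizing them by $z = f'(a) \in \mathbf{b}\setminus\mathbf{a-1}$, and recognizing each such $f'$ as $(a,z)\circ\iota_{a,b}$.
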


\begin{nota}
\label{nota:tr}
For $0 < a \leq b \in \nat$, denote by 
\[
\tr_{a,b} :
\kring \hfi (\mathbf{a-1}, \mathbf{b})  \uparrow _{\sym_{a-1}} ^{\sym_a}
\rightarrow 
\kring \hfi (\mathbf{a}, \mathbf{b}).
\]
the $\sym_a \op \times \sym_b$-equivariant map adjoint to (\ref{eqn:a-1_a}).
 For $a=0$, $\kring \hfi (\mathbf{a-1}, \mathbf{b})$ (and hence the map $\tr_{0,b}$ also) is taken to be $0$.
\end{nota}

\begin{rem}
For $0<a \leq b$, clearly $[\iota_{a-1,b}]$ generates $\kring \hfi (\mathbf{a-1}, \mathbf{b})  \uparrow _{\sym_{a-1}} ^{\sym_a}$ as a $\sym_a \op \times \sym_b$-module, hence $\tr_{a,b}$ is also determined by the image of $[\iota_{a-1,b}] $ given in Lemma \ref{lem:iota_a-1_a}.

\end{rem}

\section{Representation theory of the symmetric groups}
\label{sect:rep}

This Section reviews elements of the representation theory of the symmetric groups that are required later. References are given to the book \cite{MR2643487} so as to have a convenient single reference for the reader. In particular, notation and conventions follow this reference.

Here $\kring$ is taken to be a field of characteristic zero.

\subsection{Background}

\begin{nota}
For $n \in \nat$,
\begin{enumerate}
\item
$\sym_n$ denotes the symmetric group on $n$ letters, which identifies with the group of  automorphisms of $\mathbf{n}$;
\item 
for $\lambda \vdash n$ a partition of $n$, $S^\lambda$ denotes the associated simple representation, indexed so that $S^{(n)}$ is the trivial representation and $S^{(1^n)}$ is the signature representation. 
\end{enumerate}
\end{nota}

The conventions used here follow \cite{MR2643487}. In particular, a partition $\lambda = (\lambda_1, \ldots , \lambda_l)$ has associated Young diagram in which the $i$th row has length $\lambda_i$. For instance, the partition $(4,2,1)$ has Young diagram:

\ytableausetup{smalltableaux}

\begin{ytableau}
\  &   &  &  \\
&   \\
\ 
\end{ytableau}.

Young diagrams are given coordinates via $(\mathrm{rows}, \mathrm{columns})$. For example, in the above case:

\ytableausetup{centertableaux,boxsize={2em}}
\begin{ytableau}
\scriptstyle(1,1) & \scriptstyle(1,2)  & \scriptstyle(1,3)  & \scriptstyle(1,4) \\
\scriptstyle(2,1)  & \scriptstyle(2,2)  \\
\scriptstyle(3,1)
\end{ytableau}

\begin{nota}
Write $\preceq$ for the partial order on the set of partitions defined by $\mu \preceq \lambda$ if and only $\mu_i \leq \lambda_i$ for all $i \in \nat$ (unspecified entries are understood to be zero).  Equivalently, $\mu \preceq \lambda$ if and only if the Young diagram of $\mu$ is contained in that of $\lambda$.

If $\mu \preceq \lambda$, $\lambda/ \mu$ denotes the associated skew diagram; this is viewed as the complement of the Young diagram of $\mu$ in that of $\lambda$. 
\end{nota}

\begin{exam}
If $\mu = (1,1)$ and $\lambda = (4,2,1)$ then $\mu \preceq \lambda$, which can be represented by the diagram 
\ytableausetup{smalltableaux}

\begin{ytableau}
*(lightgray)  &  &  &  \\
*(lightgray) &   \\
\ 
\end{ytableau}
in which the Young diagram of $\mu$ is shaded.

The skew diagram $\lambda /\mu$ is represented by:
\begin{ytableau}
\none &   &  &  \\
\none &   \\
\ 
\end{ytableau}.
\end{exam}

\begin{rem}
\label{rem:partial_order_skew}
For a fixed partition $\lambda$, the partial order $\preceq$ induces a partial order on the skew partitions  such that $\lambda/\mu_1 \preceq \lambda/\mu_2$ if and only if $\mu_2 \preceq \mu_1$ (note the reversal of the order). This corresponds to the inclusion of skew diagrams. 
\end{rem}

\begin{exam}
\label{exam:skew_partitions}
For $\lambda =(4,2,1)$ and $\mu = (1,1)$, $\mu' = (2,1)$, clearly one has $\mu \preceq \mu'$, hence $\lambda/ \mu' \preceq \lambda /\mu$, which corresponds at the level of the skew diagrams to the inclusion of a sub diagram:
\begin{ytableau}
\none & \none   & &\\
\none &  \\
\ 
\end{ytableau}
$\quad \preceq \quad$ 
\begin{ytableau}
\none &   &  &  \\
\none &   \\
\ 
\end{ytableau}.
\end{exam}

\begin{defn}
\label{defn:horizontal_strip}
For partitions $\mu \preceq \lambda$, the skew partition $\lambda / \mu$ is a horizontal strip if each column of the skew diagram contains at most one box.
\end{defn}

This can be reformulated using the following. 

\begin{nota}
\label{nota:hs}
For a partition $\lambda$, let $\hs{\lambda}$ denote the partition obtained by forgetting $\lambda_1$. (In terms of the associated Young diagrams, this corresponds to removing the first row.)
\end{nota}

Clearly $\hs{\lambda} \preceq \lambda$ and $\lambda / \hs{\lambda}$ is a horizontal strip with Young diagram that contains precisely one box in each non-empty column of $\lambda$. Moreover, one has:

\begin{lem}
\label{lem:hs}
For $\mu \preceq \lambda$, $\lambda / \mu$ is a horizontal strip if and only if $\hs{\lambda} \preceq \mu$. In particular, $\lambda/ \hs{\lambda}$ is the maximal element of the poset of horizontal strips for $\lambda$.
\end{lem}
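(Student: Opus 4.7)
The plan is to reduce both sides of the claimed equivalence to the same family of inequalities on the row lengths of $\mu$ and $\lambda$. Writing $\lambda = (\lambda_1, \lambda_2, \ldots)$, I first note that for $\mu \preceq \lambda$, the boxes of $\lambda/\mu$ in row $i$ occupy exactly the columns $\mu_i + 1, \ldots, \lambda_i$. By Definition \ref{defn:horizontal_strip}, $\lambda/\mu$ is a horizontal strip precisely when, for each $i \geq 1$, the column ranges of rows $i$ and $i+1$ are disjoint. Because $\mu$ is a partition one has $\mu_{i+1} \leq \mu_i \leq \lambda_i$, so row $i+1$ cannot lie strictly to the right of row $i$ (and when either row is empty the relevant inequality is automatic). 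The horizontal strip condition therefore reduces to $\lambda_{i+1} \leq \mu_i$ for every $i \geq 1$.

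On the other side, by Notation \ref{nota:hs} one has $\hs{\lambda}_i = \lambda_{i+1}$, so the condition $\hs{\lambda} \preceq \mu$ unwinds directly to the same family of inequalities $\lambda_{i+1} \leq \mu_i$ for all $i \geq 1$. Combined with the standing hypothesis $\mu \preceq \lambda$, this yields both directions of the claimed equivalence.

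For the maximality assertion, I would invoke Remark \ref{rem:partial_order_skew}: on skew partitions $\lambda/\mu$ with $\lambda$ fixed, the partial order is the reverse of $\preceq$ on the $\mu$'s. The maximum horizontal strip thus corresponds to the minimum $\mu$ with $\hs{\lambda} \preceq \mu \preceq \lambda$; since $\hs{\lambda} \preceq \lambda$ is immediate from the construction of $\hs{\lambda}$, this minimum is $\mu = \hs{\lambda}$, and the corresponding maximal horizontal strip is $\lambda/\hs{\lambda}$. The only points warranting care are the index shift $\hs{\lambda}_i = \lambda_{i+1}$ and the direction of the order reversal of Remark \ref{rem:partial_order_skew}; no genuine obstacle is to be expected, as the lemma is essentially a definitional repackaging.
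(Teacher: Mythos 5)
Your proof is correct. The paper actually states this lemma without proof, treating it as a clear definitional fact following the observations about $\hs{\lambda}$ immediately preceding it, so there is no proof in the paper to compare against; your argument supplies the natural, standard justification. One small point worth being explicit about: you pass from "each column contains at most one box" to "rows $i$ and $i+1$ have disjoint column ranges for every $i$" and call these "precisely" equivalent. The forward direction is immediate, but the converse deserves a line: if non-adjacent rows $i < j$ shared a column $c$, then $c \leq \lambda_j \leq \lambda_{i+1}$ and $c > \mu_i \geq \mu_{i+1}$, so row $i+1$ would already contain $c$ — hence adjacent-row disjointness is in fact sufficient. With that sentence added, the reduction to the single family $\lambda_{i+1} \leq \mu_i$ is airtight, and the matching with $\hs{\lambda}_i = \lambda_{i+1}$ plus the order-reversal of Remark~\ref{rem:partial_order_skew} for the maximality statement are exactly right.
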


\begin{exam}
In Example \ref{exam:skew_partitions}, $\hs{\lambda} = \mu'$. Visibly $\lambda / \mu'$ is a horizontal strip, whereas $\lambda / \mu$ is not, which corresponds to the obvious fact that $\mu' \not \preceq \mu$.
\end{exam}

To each skew partition $\lambda/\mu$, one associates a skew representation as follows. Here, for $a \leq b \in \nat$, one uses $\iota_{a,b} : \mathbf{a} \subset \mathbf{b}$ to define the Young subgroup 
$
\sym_a \times \sym_{b-a} \subset \sym_b,
$ 
as usual. Recall the following:

\begin{defn}
\label{defn:skew_representation}
For $\lambda \vdash b$ and $\mu \vdash a$ with $\mu \preceq \lambda$, the skew representation $S^{\lambda/ \mu}$ is the $\sym_{b-a}$-module
\[
S^{\lambda/ \mu }:= 
\hom_{\sym_{a}}(S^\mu, S^\lambda),
\]
where the $\sym_{b-a}$ action is induced by the restriction along $\sym_{b-a} \subset \sym_b$ of the action on $S^\lambda$.
\end{defn}

The importance of the skew representations is illustrated by the following (see \cite[Proposition 3.5.5]{MR2643487}, for example.):

\begin{prop}
\label{prop:restrict_S_lambda}
For $a \leq b \in \nat$ and $\lambda \vdash b$, 
the restriction of $S^\lambda$ to  $\sym_a \times \sym_{b-a}$ identifies as:
\[
S^\lambda \downarrow^{\sym_b} _{\sym_a \times \sym_{b-a}} 
\cong 
\bigoplus_{ \substack {\mu \vdash a\\ \mu \preceq \lambda }}
  S^\mu \boxtimes S^{\lambda /\mu},
\]
where $\boxtimes$ denotes the exterior tensor product.
\end{prop}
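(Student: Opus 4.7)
The plan is to deduce the proposition directly from semisimplicity of $\kring \sym_a$ and the definition of the skew representation given in Definition \ref{defn:skew_representation}. The key observation is that when the definition $S^{\lambda/\mu} := \hom_{\sym_a}(S^\mu, S^\lambda)$ is unpacked, the isomorphism essentially amounts to the canonical isotypical decomposition with respect to $\sym_a$, upgraded to take the commuting $\sym_{b-a}$-action into account.

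First I would restrict $S^\lambda$ to $\sym_a$ alone, forgetting the $\sym_{b-a}$-action. Since $\mathrm{char}(\kring) = 0$, the group algebra $\kring \sym_a$ is semisimple, so the isotypical decomposition gives a canonical isomorphism of $\sym_a$-modules
\[
S^\lambda \downarrow^{\sym_b}_{\sym_a}
\cong
\bigoplus_{\mu \vdash a} S^\mu \otimes_\kring \hom_{\sym_a}(S^\mu, S^\lambda)
=
\bigoplus_{\mu \vdash a} S^\mu \otimes_\kring S^{\lambda/\mu},
\]
where the summand indexed by $\mu$ is realised as the image of the evaluation map $S^\mu \otimes \hom_{\sym_a}(S^\mu, S^\lambda) \to S^\lambda$.

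Next I would reintroduce the $\sym_{b-a}$-action. Within $\sym_b$, the subgroups $\sym_a$ and $\sym_{b-a}$ commute, so the $\sym_{b-a}$-action on $S^\lambda$ commutes with the $\sym_a$-action and hence preserves each isotypical summand. Transported through the evaluation isomorphism, the induced $\sym_{b-a}$-action on $\hom_{\sym_a}(S^\mu, S^\lambda)$ is by post-composition, which is exactly the $\sym_{b-a}$-structure used in Definition \ref{defn:skew_representation}. Thus the previous isomorphism promotes to an isomorphism of $\sym_a \times \sym_{b-a}$-modules
\[
S^\lambda \downarrow^{\sym_b}_{\sym_a \times \sym_{b-a}}
\cong
\bigoplus_{\mu \vdash a} S^\mu \boxtimes S^{\lambda/\mu}.
\]

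Finally, it remains to restrict the indexing set to $\mu \preceq \lambda$. For $\mu \not \preceq \lambda$, the vanishing $S^{\lambda/\mu} = 0$ is obtained by iterating the classical branching rule for $\sym_{c-1} \subset \sym_c$ from $c=b$ down to $c=a+1$: each such step removes a single box from the Young diagram, so the simple $\sym_a$-constituents of $S^\lambda \downarrow^{\sym_b}_{\sym_a}$ are exactly the $S^\mu$ with $\mu$ obtained from $\lambda$ by successive removals of boxes, namely those with $\mu \preceq \lambda$. The only ingredient that must be invoked rather than derived is this classical single-step branching rule, which I expect to be the main (and only) non-formal step; the rest of the argument is a textbook application of semisimplicity.
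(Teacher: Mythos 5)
Your proof is correct, and it is the standard argument for this result. Note, however, that the paper does not actually give a proof of Proposition \ref{prop:restrict_S_lambda}: it simply cites \cite[Proposition 3.5.5]{MR2643487} and moves on, so there is no in-paper argument to compare against. The structure of your argument --- the isotypical decomposition with respect to $\kring\sym_a$, the observation that the commuting $\sym_{b-a}$-action transports across the evaluation isomorphism to post-composition on $\hom_{\sym_a}(S^\mu, S^\lambda)$ (which is precisely the $\sym_{b-a}$-module structure of Definition \ref{defn:skew_representation}), and the reduction of the index set via the single-box branching rule --- is exactly what one expects to find in the cited reference. One small point worth making explicit: writing the isotypical decomposition with multiplicity space $\hom_{\sym_a}(S^\mu, S^\lambda)$ and no endomorphism ring appearing uses that $\kring$ is a splitting field for $\sym_a$, which holds because $\mathbb{Q}$ already splits the symmetric groups and $\kring$ has characteristic zero. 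Also, for the Proposition itself you only need the ``if $S^\mu$ occurs then $\mu\preceq\lambda$'' direction of your last step (so that the summands with $\mu\not\preceq\lambda$ vanish); the converse, that every $\mu\preceq\lambda$ of size $a$ actually occurs, would require observing that $\lambda/\mu$ admits a saturated chain of one-box removals, but that is not needed here.
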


\subsection{Tableaux and axial distance}

Recall that, for $\lambda\vdash b$, a Young tableau of shape $\lambda$ is a bijection between the boxes of the Young diagram of $\lambda$ and the set $\mathbf{b}$.

A Young tableau $T$ of shape $\lambda$ is standard if the entries increase both along the rows and down the columns of $T$. The set of standard tableaux of shape $\lambda$ is denoted by $\tab (\lambda)$.

These notions carry over to skew diagrams of the form $\lambda/ \mu$: 
 if $\lambda \vdash a $ and $\mu \vdash b$ with $\mu \preceq \lambda$, then a tableau of shape $\lambda/\mu$ is a bijection between the boxes of the Young diagram and $\mathbf{b-a}$. A tableau of shape $\lambda/\mu$ is standard if entries increase both along the rows and down the columns. The set of standard skew tableaux of shape $\lambda / \mu$ is denote $\tab (\lambda/\mu)$. (Thus, on taking $\mu=(0)$, one recovers $\tab (\lambda)$.)  
  
For $\lambda/ \mu$ a horizontal strip, one has the  distinguished standard skew-tableau:

\begin{nota}
\label{nota:trev}
For $\lambda / \mu$ a horizontal strip, let $T\rev \in \tab (\lambda/ \mu)$ be the standard Young tableau of shape $\lambda/ \mu$ such that numbers increase from left to right.
\end{nota}

\begin{exam}
\label{exam:trev}
Take $\lambda =(5,2,1,1)$ and $\mu= (3,1,1)$, so that $\hs{\lambda} \preceq \mu \preceq \lambda$; then for $\lambda /\mu$,  the tableau $T\rev$ is:
 \ytableausetup{smalltableaux}
\begin{ytableau}
\none & \none & \none &\none & 3 & 4
\\
\none & \none & 2
\\
\none
\\
\none & 1 
\end{ytableau}. 
\end{exam}

We introduce the following:

\begin{nota}
\label{nota:coordinates}
For $i \in \mathbf{b}$ and a Young tableau of shape $\lambda$, denote by $(u_i, v_i)$ the coordinates of $i$.
\end{nota}

Then, given a tableau $T$ of shape $\lambda/ \mu$  one has the notion of axial distance between entries of the tableau:

\begin{defn}
\label{defn:axial}
For $T$ a tableau of shape $\lambda / \mu$, where $\lambda \vdash b$ and $\mu \vdash a$ with $\mu \preceq \lambda$:
\begin{enumerate}
\item 
for $i,j \in \mathbf{b-a}$, the axial distance $a(j,i) = a (j,i)(T)$ from $j$ to $i$ in $T$ is  $(v_{j}-v_i) -  (u_{j} -u_i) = (v_j -u_j) - (v_i-u_i)$;
\item 
for $j \in \mathbf{b-a-1}$, $r_j=r_j(T):= a(j+1,j)(T)$, the axial distance from $j+1$ to $j$ in $T$.  
\end{enumerate} 
\end{defn}

Axial distance satisfies the following evident additivity property:

\begin{lem}
\label{lem:axial_additivity}
For $T$ a tableau of shape $\lambda / \mu$, where $\lambda \vdash b$ and $\mu \vdash a$ with $\mu \preceq \lambda$, and for  $i,j,k \in \mathbf{b-a}$:
\[
a (k,i) (T) = a(k,j) (T) + a (j,i) (T).
\]
In particular, 
 $
a(3,1) (T) = r_1 (T) + r_2(T).
$
\end{lem}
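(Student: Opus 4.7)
The proof is essentially a one-line verification from the definition, so the plan is to reduce the statement to the telescoping identity built into the second formula of Definition \ref{defn:axial}. Specifically, recall that the axial distance is defined as
\[
a(j,i)(T) = (v_j - u_j) - (v_i - u_i),
\]
i.e., it is the difference of the single-entry invariant $c(i) := v_i - u_i$ (the content of the box occupied by $i$) between the two entries. Once written in this form, additivity is immediate since
\[
a(k,i)(T) = c(k) - c(i) = \bigl(c(k) - c(j)\bigr) + \bigl(c(j) - c(i)\bigr) = a(k,j)(T) + a(j,i)(T).
\]

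The first step of the proof would be to record this observation, namely that $a(j,i)(T)$ depends only on the two contents $c(i)$ and $c(j)$ and is their difference. The second step is the telescoping identity above, applied for arbitrary $i,j,k$.

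For the final assertion, one simply specializes to $i=1$, $j=2$, $k=3$ and uses the convention $r_j(T) = a(j+1,j)(T)$ from the second clause of Definition \ref{defn:axial}, giving
\[
a(3,1)(T) = a(3,2)(T) + a(2,1)(T) = r_2(T) + r_1(T).
\]

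There is no real obstacle here: the content is bookkeeping, and the key is simply to observe that $a(j,i)$ factors through the content function $i \mapsto c(i) = v_i - u_i$, after which additivity is the trivial telescoping of a difference.
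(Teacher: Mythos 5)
Your proof is correct and matches what the paper treats as evident: the paper's Definition \ref{defn:axial} already writes $a(j,i)(T) = (v_j - u_j) - (v_i - u_i)$, so additivity is exactly the telescoping of the content function $c(i) = v_i - u_i$ that you describe. The specialization to $i=1,j=2,k=3$ with $r_j = a(j+1,j)$ is likewise immediate.
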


Axial distance gives the following criterion for a skew diagram $\lambda/ \mu$ to be a horizontal strip:

\begin{lem}
\label{lem:Trev}
Suppose $\lambda \vdash b$ and $\mu \vdash a$ with  $\mu \preceq \lambda$. Then $\lambda/ \mu$ is a horizontal strip if and only if there exists $T \in \tab(\lambda/ \mu)$ such that, for all $1 \leq i<j \leq b-a$, the axial distance  $a (j,i)(T)$ from $j$ to $i$ is positive. 

Moreover, such a  standard Young tableau is unique, namely is $T\rev$.
\end{lem}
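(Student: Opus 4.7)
The plan is to recast the positive axial-distance condition as a statement about contents. Recall that the axial distance is $a(j,i)(T) = (v_j - u_j) - (v_i - u_i)$, i.e.\ the difference of contents of the boxes labelled $j$ and $i$. So writing $c_i := v_i - u_i$ for the content of the box of $T$ labelled $i$, the hypothesis ``$a(j,i)(T)>0$ for all $i<j$'' is equivalent to the strict increase of contents $c_1 < c_2 < \cdots < c_{b-a}$.

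For the forward direction, suppose $\lambda/\mu$ is a horizontal strip. Since every column contains at most one box, the boxes of $\lambda/\mu$ admit a unique left-to-right ordering (by strictly increasing column index); labelling them $1,2,\ldots,b-a$ in this order is precisely the definition of $T\rev$. Standardness of $T\rev$ is immediate: within a single row of $\lambda/\mu$ the boxes occupy a contiguous range of columns, so the labels increase along rows, while each column has at most one box, so the column condition is vacuous. The substantive point is to check that the contents strictly increase. For this, I would prove the geometric lemma that if $B_i=(u_i,v_i)$ and $B_{i+1}=(u_{i+1},v_{i+1})$ are consecutive boxes of a horizontal strip in left-to-right order, then $u_{i+1}\leq u_i$. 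This is a short contradiction argument: if $u_{i+1}>u_i$, then $(u_i,v_{i+1})$ lies in the Young diagram of $\lambda$ by convexity; it cannot lie in $\lambda/\mu$ (else column $v_{i+1}$ would contain two boxes of the strip), so it lies in $\mu$, forcing $\mu_{u_i}\geq v_{i+1} > v_i$, which contradicts $B_i\in\lambda/\mu$. Combining $u_{i+1}\leq u_i$ with $v_{i+1}>v_i$ gives $c_{i+1}-c_i\geq 1$, as required.

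For the reverse direction, suppose there exists a standard tableau $T$ with $c_1<\cdots<c_{b-a}$ but that $\lambda/\mu$ is not a horizontal strip. Then some column contains two boxes, say at $(u,c)$ and $(u',c)$ with $u<u'$. Standardness forces the label $i$ of $(u,c)$ to be smaller than the label $j$ of $(u',c)$, yet $c_i = c-u > c-u' = c_j$, contradicting $c_i<c_j$. So $\lambda/\mu$ is a horizontal strip.

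Finally, for uniqueness, the same kind of geometric argument as in the forward direction shows that in a horizontal strip the contents of distinct boxes are always distinct: if $(u_1,v_1)$ and $(u_2,v_2)$ with $u_1<u_2$ had the same content then $v_2-v_1=u_2-u_1\geq 1$, and using $\mu_{u_1}\geq\lambda_{u_1+1}\geq\cdots\geq\lambda_{u_2}\geq v_2>v_1$ contradicts $v_1>\mu_{u_1}$. Hence any $T$ with $c_1<\cdots<c_{b-a}$ must label the boxes in the unique order of increasing content, which by the forward argument coincides with the left-to-right order, i.e.\ $T=T\rev$. The main obstacle is isolating and verifying the geometric lemma that rows weakly decrease along the left-to-right ordering of a horizontal strip; everything else is formal bookkeeping.
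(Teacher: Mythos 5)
Your proof is correct and follows the same approach as the paper, which merely sketches it ("one checks that $T\rev$ is the unique element with the required property") for the horizontal-strip direction while giving essentially your argument (adjacent boxes in a column have axial distance $-1$) for the converse. Recasting the positivity condition as strict increase of contents $c_i=v_i-u_i$, and isolating the geometric observation that consecutive boxes of a horizontal strip in left-to-right order satisfy $u_{i+1}\leq u_i$ (which also yields distinctness of contents and hence uniqueness), is a clean way to make explicit the details the paper elides.
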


\begin{proof}
If $\lambda /\mu$ is not a horizontal strip, there exists a column containing at least two boxes. For $T \in \tab(\lambda/\mu)$, consider such adjacent boxes with labels
 \ytableausetup{smalltableaux}
\begin{ytableau}
i \\
j
\end{ytableau}.
Since $T$ is standard, $i<j$ and $a(j,i)(T)=-1$. 

If $\lambda/ \mu$ is a horizontal strip, then one checks that $T\rev$  is the unique element of $\tab (\lambda /\mu)$ with the required property.
\end{proof}

\subsection{Young's orthogonal form}

Since, for $\lambda \vdash b$, a tableau of shape $\lambda$ is an isomorphism from the set of boxes of the Young diagram of $\lambda$ to $\mathbf{b}$, the symmetric group $\sym_b$ acts by post-composition. There is a subtlety, if $T \in \tab(\lambda)$  and $\sigma \in \sym_b$, $\sigma T$ is not necessarily standard. 

We focus upon the Coxeter generators of the symmetric groups:
 
\begin{nota}
\label{nota:coxeter}
For $n \in \nat$, the set of Coxeter generators of $\sym_n$ is  the set of transpositions $s_i:= (i, i+1)$, for $1\leq i < n$. (The value of $n$ is usually understood, so is not included in the notation.)
\end{nota}

If $T \in \tab (\lambda)$ and $1 \leq j <b$, then $s_j T$ is standard unless the entries $j$, $j+1$ occur in adjacent boxes 
\ytableausetup{centertableaux}
\begin{ytableau}
\ & \ 
\end{ytableau} or 
\begin{ytableau}
\ \\
\ 
\end{ytableau}; 
 equivalently $|r_j|= |r_j(T)|=1$.

In order to use Young's orthogonal form, we work over $\kring = \mathbb{R}$. For $\lambda \vdash b$  
(see \cite[Theorem 3.4.4]{MR2643487}) Young's orthonormal basis of $S^\lambda$ is
$$
\{ w_T | T \in \tab (\lambda) \}
$$
with the $\sym_b$-action determined by the following action of the Coxeter generators:
\begin{eqnarray*}
s_j w_T &=& \frac{1}{r_j} w_T + \sqrt{ 1 - \frac{1}{r_j^2}}\  w_{s_jT}
\\
s_j w_{s_jT}&=& - \frac{1}{r_j} w_{s_jT} + \sqrt{ 1 - \frac{1}{r_j^2}} \ w_{T},
\end{eqnarray*}
(assuming that $s_jT$ is standard for the second equation).

As above, $s_jT$ is non-standard if and only if  $|r_j|=1$ (when $\frac{1}{r_j}= r_j$), in which case the first equation reduces to 
\begin{eqnarray*}
s_j w_T &=& r_j w_T. 
\end{eqnarray*}

The Young orthonormal basis adapts to working with skew diagrams (see \cite[Theorem 3.5.5]{MR2643487}). For $\lambda \vdash b$ and $\mu \vdash a$ with $\mu \preceq \lambda$, the skew representation $S^{\lambda/\mu}$ has orthonormal basis 
\[
\{ \Phi _T | T \in \tab (\lambda/ \mu) \}
\]
where the Coxeter generators of $\sym_{b-a}$ act by 
\begin{eqnarray*}
s_j \Phi_T &=& \frac{1}{r_j} \Phi_T + \sqrt{ 1 - \frac{1}{r_j^2}}\  \Phi_{s_jT}.
\end{eqnarray*}

\begin{rem}
If $\lambda/\mu$ is a horizontal strip, then $s_jT$ is not standard if and only if $r_j=1$, in which case the above action reduces to $s_j \Phi_T = \Phi_T$.
\end{rem}

\subsection{First applications}
\label{subsect:first_apps}

For partitions $\lambda, \lambda'$ of $b$, since the contragredient of $S^\lambda$ is isomorphic to $S^\lambda$, one has
\begin{eqnarray}
\label{eqn:invt_tensor_product_simples}
(S^\lambda \otimes S^{\lambda'})^{\sym_b} \cong \left\{ 
\begin{array}{ll}
\kring & \lambda = \lambda' \\
0 & \mbox{otherwise}
\end{array}
\right.
\end{eqnarray}
where $\sym_b$ acts diagonally. The Young orthonormal basis allows one to identify an explicit generator of these invariants in the case $\lambda = \lambda'$:

\begin{lem}
\label{lem:invt_prod_tens}
For $\lambda \vdash b$,
the element $\sum_{T \in \tab(\lambda)} w_T \otimes w_T \in S^\lambda \otimes S^\lambda$ is a  generator of $(S^\lambda \otimes S^\lambda )^{\sym_b}= \kring$. 
\end{lem}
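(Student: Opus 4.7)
The plan is to show the element is non-zero (which is immediate, since it is a linear combination of distinct basis vectors of $S^\lambda \otimes S^\lambda$ all with coefficient $1$) and that it lies in $(S^\lambda \otimes S^\lambda)^{\sym_b}$. Given (\ref{eqn:invt_tensor_product_simples}), any nonzero invariant is a generator, so these two facts suffice.

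For invariance, I will appeal to the fact that $\sym_b$ acts orthogonally on $S^\lambda$ with respect to the inner product for which $\{w_T : T \in \tab(\lambda)\}$ is orthonormal. This symmetric, non-degenerate, $\sym_b$-invariant bilinear form yields a $\sym_b$-equivariant isomorphism $S^\lambda \xrightarrow{\cong} (S^\lambda)^\sharp$ sending $w_T$ to its dual basis vector $w_T^\sharp$. Under the resulting equivariant identification
\[
S^\lambda \otimes S^\lambda \;\cong\; S^\lambda \otimes (S^\lambda)^\sharp \;\cong\; \mathrm{End}(S^\lambda),
\]
the element $\sum_{T} w_T \otimes w_T$ corresponds to $\sum_T w_T \otimes w_T^\sharp$, namely the identity endomorphism, which is manifestly $\sym_b$-equivariant. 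Hence the preimage is $\sym_b$-invariant.

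If a more hands-on verification is preferred, one can instead check directly that each Coxeter generator $s_j$ fixes the sum (Notation \ref{nota:coxeter}). Partition $\tab(\lambda)$ according to the action of $s_j$: tableaux $T$ with $|r_j(T)|=1$ contribute terms with $s_j w_T = r_j w_T$, so $s_j(w_T \otimes w_T) = r_j^2\, w_T \otimes w_T = w_T \otimes w_T$; the remaining tableaux come in pairs $\{T, s_j T\}$ with $r_j(s_j T) = -r_j(T)$, on which $s_j$ acts via the orthogonal matrix $\bigl(\begin{smallmatrix} 1/r & \sqrt{1-1/r^2} \\ \sqrt{1-1/r^2} & -1/r \end{smallmatrix}\bigr)$ (with $r=r_j(T)$). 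A direct expansion using Young's orthogonal form then shows that $s_j\bigl(w_T \otimes w_T + w_{s_jT}\otimes w_{s_jT}\bigr) = w_T \otimes w_T + w_{s_jT}\otimes w_{s_jT}$, the cross terms cancelling. Since the Coxeter generators generate $\sym_b$, invariance follows.

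No step is a substantial obstacle: the abstract argument is essentially formal once one observes that Young's orthonormal basis exhibits $S^\lambda$ as a self-dual $\sym_b$-representation; the concrete verification amounts to a small $2\times 2$ calculation in the pair $\{T, s_j T\}$ together with the trivial check in the degenerate case $|r_j|=1$.
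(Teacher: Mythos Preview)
Your proposal is correct. Your second, hands-on verification via the Coxeter generators is exactly the paper's own argument (the paper says only that one checks invariance under the diagonal action of the Coxeter generators using the explicit formulas for Young's orthogonal form). Your first argument, identifying $\sum_T w_T\otimes w_T$ with the identity of $\mathrm{End}(S^\lambda)$ via the $\sym_b$-invariant inner product making $\{w_T\}$ orthonormal, is a genuinely different and more conceptual route: it avoids any computation and makes the invariance transparent, at the cost of appealing to the (standard) fact that the Young basis is orthonormal for an invariant form. Both approaches are fine here; the paper's direct check has the small advantage of staying entirely within the explicit formulas already introduced, while yours explains \emph{why} the element should be invariant without touching coefficients.
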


\begin{proof}
It is sufficient to check that this element is invariant under the (diagonal) action of the Coxeter generators. This is shown using the explicit action given above. 
\end{proof}

Likewise, if $\mu \vdash a$ such that $\mu \preceq \lambda$, then 
\[
(S^{\lambda /\mu})^{\sym_{b-a}}
\cong 
\left\{ 
\begin{array}{ll}
\kring & \mbox{$\lambda/ \mu$ is a horizontal strip}
\\
0 &\mbox{ otherwise,}
\end{array}
\right.
\]
(see \cite[Proposition 3.5.12]{MR2643487}, for example) and, moreover, if $\lambda/ \mu$ is a horizontal strip, then $S^{\lambda / \mu}$ is a permutation module. More precisely (see \cite[Proposition 3.5.8]{MR2643487}, for example), if $\lambda/\mu$ is a horizontal strip with rows of lengths $b_1, \ldots , b_t$ (so that $\sum_i b_i=b-a$), then 
\[
S^{\lambda / \mu} \cong \kring\uparrow_{\prod _i \sym_{b_i}} ^{\sym_{b-a}}.  
\]

This can be made explicit using the standard tableau $T\rev \in \tab (\lambda/\mu)$ (see Notation \ref{nota:trev}):

\begin{prop}
\label{prop:permutation}
For $\lambda \vdash b$, $\mu \vdash a$ such that $\hs{\lambda} \preceq \mu \preceq \lambda$, the map $1 \mapsto \Phi_{T\rev}$ induces an isomorphism of $\sym_{b-a}$-modules
\[
\sym_{b-a} \otimes_{\prod _i \sym_{b_i}} \kring 
\stackrel{\cong}{\longrightarrow} 
S^{\lambda/\mu}.
\]
\end{prop}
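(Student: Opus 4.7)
The plan is to first verify that $\Phi_{T\rev}$ is invariant under the Young subgroup $\prod_i \sym_{b_i}$, so that the assignment $1 \mapsto \Phi_{T\rev}$ descends to a well-defined $\sym_{b-a}$-equivariant map $\phi$; since both sides have the same dimension, it then suffices to prove $\phi$ is surjective.

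For invariance: the Young subgroup $\prod_i \sym_{b_i}$, realised inside $\sym_{b-a}$, is generated by those Coxeter transpositions $s_j$ for which $j$ and $j+1$ lie in the same row of $T\rev$. The hypothesis $\hs{\lambda}\preceq \mu$ forces the column ranges of the non-empty rows of $\lambda/\mu$ to be pairwise disjoint and strictly nested, with upper rows occupying strictly larger columns. Reading $\lambda/\mu$ column-by-column from left to right therefore fills $T\rev$ with the entries $1,2,\ldots,b-a$ in consecutive blocks, row by row from bottom to top; in particular, $j$ and $j+1$ in the same row of $T\rev$ occupy adjacent boxes, so $r_j(T\rev)=1$, and the degenerate case of Young's orthogonal form yields $s_j\Phi_{T\rev}=\Phi_{T\rev}$, as required.

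The dimensions match: both are equal to the multinomial $\binom{b-a}{b_1,\ldots,b_t}$, since a standard tableau of horizontal-strip shape amounts to a partition of $\mathbf{b-a}$ into subsets of sizes $b_1,\ldots,b_t$ (one per row, with entries then forced into increasing order). For surjectivity, associate to each $T\in\tab(\lambda/\mu)$ the permutation $\sigma_T\in\sym_{b-a}$ defined by $\sigma_T(i)=T(B_i)$, where $B_1,\ldots,B_{b-a}$ is the left-to-right enumeration of the boxes of $\lambda/\mu$; by Lemma~\ref{lem:Trev}, $\sigma_{T\rev}=e$, and a direct verification gives $\sigma_{s_k T}=s_k\sigma_T$. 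I would then induct on the Coxeter length $\ell(\sigma_T)$: for $T\neq T\rev$, choose a left descent $s_k$ of $\sigma_T$, which amounts to saying that $k+1$ precedes $k$ in the reading order. By the column-range analysis above, $k$ and $k+1$ then lie in distinct rows (hence distinct columns) with $u_{k+1}-u_k\geq 1$ and $v_{k+1}-v_k\leq -1$, so $r_k(T)\leq -2$. Hence $s_k T$ is standard with $|r_k(s_k T)|\geq 2$, and Young's orthogonal formula expresses $\Phi_T$ as a non-degenerate linear combination of $\Phi_{s_k T}$ and $s_k\Phi_{s_k T}$; since $\ell(\sigma_{s_k T})=\ell(\sigma_T)-1$, the inductive hypothesis concludes.

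The main obstacle is the sharp inequality $r_k(T)\leq -2$ at a left descent: ruling out the borderline possibilities $r_k\in\{-1,0\}$ uses essentially both that $\lambda/\mu$ is a horizontal strip (no column contains two boxes) and the strict nesting of column ranges imposed by $\hs{\lambda}\preceq \mu$.
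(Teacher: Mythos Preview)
Your argument is correct. The invariance step matches the paper's: both observe that the Coxeter generators $s_j$ of the Young subgroup $\prod_i \sym_{b_i}$ satisfy $r_j(T\rev)=1$, so $s_j\Phi_{T\rev}=\Phi_{T\rev}$. (A small terminological quibble: the column ranges of the rows are pairwise \emph{disjoint}, not ``nested''; but your subsequent use is correct.)

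Where you diverge is in the surjectivity step. The paper simply appeals to \cite[Proposition 3.5.6]{MR2643487} to conclude that the map is onto, and then invokes the dimension count. You instead give a direct proof: to each $T\in\tab(\lambda/\mu)$ you attach the permutation $\sigma_T$ reading entries in the column order of $T\rev$, and you induct on $\ell(\sigma_T)$. The crucial estimate $r_k(T)\le -2$ at a left descent is justified exactly as you say: $v_{k+1}<v_k$ forces $k,k+1$ into different rows (by standardness), and the horizontal-strip condition $\lambda_{i+1}\le\mu_i$ forces the lower row to lie strictly to the left, giving $u_{k+1}-u_k\ge 1$ and $v_{k+1}-v_k\le -1$. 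Hence $s_kT$ is standard and Young's formula expresses $\Phi_T$ as a linear combination of $\Phi_{s_kT}$ and $s_k\Phi_{s_kT}$, both already in the image by the inductive hypothesis. This is a clean, self-contained argument; its advantage over the paper's citation is that it stays entirely within the orthonormal-basis framework used elsewhere in the paper, at the cost of a few more lines. One minor point: $\sigma_{T\rev}=e$ is immediate from your definition of $\sigma_T$ and does not require Lemma~\ref{lem:Trev}.
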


\begin{proof}
The hypothesis implies that $\lambda/ \mu$ is a horizontal strip.

Using the action of the Coxeter generators, one sees  that the Young subgroup $\prod _i \sym_{b_i} \subset \sym_{b-a}$ stabilizes $\Phi_{T\rev}$, hence one obtains the given morphism of $\sym_b$-modules. This is surjective, as is seen for example by applying \cite[Proposition 3.5.6]{MR2643487}. Hence it is an isomorphism, for dimension reasons.
\end{proof}

Moreover, the Young orthonormal basis allows an explicit generator for the invariants $(S^{\lambda/\mu}) ^{\sym_{b-a}}$ to be exhibited:

\begin{prop}
\label{prop:invt_skew}
If $\hs{\lambda}\preceq \mu \preceq \lambda$, then $(S^{\lambda/\mu}) ^{\sym_{b-a}} \cong \kring$ is generated by the element of the form 
\[
\sum_{T \in \tab(\lambda/ \mu)} \beta_T \Phi_T 
\]
that is uniquely determined by $\beta_{T\rev}=1$. 

The coefficients are determined by the following: if $T', T'' \in \tab(\lambda/\mu)$ such that $T''= s_i T'$, then 
\[
\frac{\beta_{T''}}{\beta_{T'}} 
=  \sqrt{\frac{r_i-1}{r_i+1}},
\]
where $r_i= r_i(T')$, which satisfies $r_i \not \in \{-1, 0,1\}$, by the hypothesis on $T',T''$. In particular,  $\beta_T \in \kring^\times$ for all $T \in \tab (\lambda)$.  
\end{prop}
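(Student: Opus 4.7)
The plan is to impose invariance directly on an expansion $\Psi = \sum_T \beta_T \Phi_T$ in the Young orthonormal basis, using the explicit Coxeter-generator action recalled above. Since $(S^{\lambda/\mu})^{\sym_{b-a}}$ has already been identified as one-dimensional when $\lambda/\mu$ is a horizontal strip, uniqueness is immediate once $\beta_{T\rev}=1$ is imposed; the work lies in deriving the ratio formula and ensuring each $\beta_T$ is nonzero.

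First I would observe that the hypothesis $\hs{\lambda}\preceq \mu$ makes $\lambda/\mu$ a horizontal strip (Lemma \ref{lem:hs}), so no two boxes share a column. Consequently, for every $T \in \tab(\lambda/\mu)$ and every $j$, the value $r_j(T)=-1$, which would require $j$ and $j+1$ to be vertically adjacent, cannot occur. Only two cases remain: $r_j(T)=1$ (horizontally adjacent in a common row of $T$, so $s_j T$ is not standard and $s_j\Phi_T=\Phi_T$) or $|r_j(T)|\geq 2$ (both $T$ and $s_j T$ in $\tab(\lambda/\mu)$).

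Next I would extract the coefficient of $\Phi_{T'}$ in $s_j\Psi$. In the nontrivial case $|r_j(T')|\geq 2$, noting $r_j(s_j T')=-r_j(T')$, this coefficient equals
\[
\frac{\beta_{T'}}{r_j(T')} + \beta_{s_j T'}\sqrt{1-\frac{1}{r_j(T')^2}}.
\]
Equating to $\beta_{T'}$ and simplifying via $1-1/r^2 = (1-1/r)(1+1/r)$ yields the announced ratio $\beta_{s_j T'}/\beta_{T'} = \sqrt{(r_j(T')-1)/(r_j(T')+1)}$, a well-defined positive real since $(r_j-1)/(r_j+1)>0$ whenever $|r_j|\geq 2$. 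The case $r_j(T')=1$ imposes no constraint.

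Finally I would normalise by $\beta_{T\rev}=1$ and propagate along the graph on $\tab(\lambda/\mu)$ with edges $\{T, s_j T\}$ for $|r_j(T)|\geq 2$. Its connectivity is the key point: each connected component $C$ determines a $\sym_{b-a}$-stable subspace $V_C:=\mathrm{span}\{\Phi_T : T\in C\}$, since the action of $s_j$ on $\Phi_T$ keeps one within $V_C$ in both regimes; by Proposition \ref{prop:permutation}, however, $\Phi_{T\rev}$ generates $S^{\lambda/\mu}$ as a $\sym_{b-a}$-module, so there is a single component, containing $T\rev$. Propagation then determines every $\beta_T$, consistency along cycles being forced by the genuine invariance of $\Psi$, and each factor $\sqrt{(r_j-1)/(r_j+1)}$ lies in $\kring^\times$, so $\beta_T\in \kring^\times$ throughout. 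The main subtle step is the connectivity argument; the rest is a direct manipulation of Young's formulas.
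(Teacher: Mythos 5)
Your proof is correct and follows essentially the same approach as the paper: impose invariance under the Coxeter generators, extract the coefficient of $\Phi_{T'}$ in $s_j\Psi$ to obtain the ratio formula, and propagate from $\beta_{T\rev}=1$ using connectivity of the graph on $\tab(\lambda/\mu)$ with edges $\{T,s_jT\}$. The one place you diverge from the paper is in justifying connectivity: the paper cites Corollary~3.1.6 of the reference, whereas you deduce it from Proposition~\ref{prop:permutation} (the fact that $\Phi_{T\rev}$ generates $S^{\lambda/\mu}$ as a $\sym_{b-a}$-module, so any $\sym_{b-a}$-stable span of a component containing $T\rev$ must be everything); this is a nice self-contained alternative that leverages a result already in hand, at the small cost of being logically tied to the horizontal-strip hypothesis rather than the general combinatorial fact.
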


\begin{proof}
Under the hypothesis upon $\lambda/\mu$,  $(S^{\lambda/\mu}) ^{\sym_{b-a}}= \kring$, hence there exists an invariant element of the form $
\sum_{T \in \tab(\lambda/ \mu)} \beta_T \Phi_T$ with not all coefficients zero. Using the action of the Coxeter elements, as explained below, one deduces that {\em all} coefficients must be non-zero. The expression can then be normalized by imposing $\beta_{T\rev}=1$, so that the $\beta_T$ are uniquely determined.

To establish the explicit relation between $\beta_{T'}$ and $\beta_{T''}$, we use that invariance requires that the coefficient of $\Phi_{T'}$ in $
s_i \big( \sum_{T \in \tab(\lambda/ \mu)} \beta_T \Phi_T \big) 
$ must be equal to $\beta_{T'}$. This coefficient is equal to that  of $\Phi_{T'}$ in 
$
s_i (\beta_{T'}\Phi_{T'} + \beta_{T''} \Phi_{T''})
$
which is $\beta_{T'}\frac{1}{r_i} + \beta_{T''}\sqrt{ 1 - \frac{1}{r_i^2}}$. The stated relation follows from the equality 
$\beta_{T'}=\beta_{T'}\frac{1}{r_i} + \beta_{T''}\sqrt{ 1 - \frac{1}{r_i^2}}$.

The rational number $\frac{r_i-1}{r_i+1}$ is defined and strictly positive, since the integer $r_i$ does not belong to $\{ -1, 0, 1 \}$.

This gives the calculation of all of the coefficients $\beta_{T}$ recursively, starting from $\beta_{T\rev}=1$ since the equivalence relation on $\tab(\lambda/\mu)$ generated by $T' \sim T''$  if there exists a Coxeter generator $s_i$ such that $T''= s_i T'$ has a single equivalence class (see \cite[Corollary 3.1.6]{MR2643487}, for example). In particular, one sees that all of the coefficients $\beta_T$ are non-zero.
\end{proof}

\begin{rem}
\label{rem:permutation_versus_Phi}
Comparing Propositions \ref{prop:permutation} and \ref{prop:invt_skew} highlights one of the subtleties here. When $\lambda/ \mu$ is a horizontal strip, $S^{\lambda /\mu}$ is a permutation module with permutation basis 
$ 
\sigma \Phi_{T\rev} 
$ as $\sigma$ ranges over a set of coset representatives for $\sym_{b-a}/ \prod _i \sym_{b_i}$.

However, the Young orthonormal basis of $S^{\lambda/\mu}$ is clearly not a permutation basis (except in degenerate cases). This is witnessed by the appearance of  coefficients $\beta_T \neq 1$ in Proposition \ref{prop:invt_skew}.
\end{rem}

\section{A first analysis of $\kring \hfi (\mathbf{a}, \mathbf{b})$} 
\label{sect:perm}

In this Section, $\kring$ is a field of characteristic zero. In particular, the categories of representations that intervene are all semisimple. 

\subsection{The permutation representation}

For $a,b \in \nat$, $\kring \hfi (\mathbf{a}, \mathbf{b})$ is a $ \sym_a\op\times \sym_b$-module. It is  the permutation module associated to the transitive $ \sym_a\op \times \sym_b$-set $\hfi (\mathbf{a}, \mathbf{b})$; in particular, it is non-zero if and only if $a\leq b$. If $a \leq b$, a generator  is given by the canonical inclusion $\iota_{a,b} : \mathbf{a}= \{1, \ldots ,a \} \subset \mathbf{b} = \{1, \ldots , b\}$.

\begin{rem}
\label{rem:op}
The inverse $g \mapsto g^{-1}$ gives the isomorphism of groups $\sym_a \cong \sym_a \op$, so that  $\hfi (\mathbf{a}, \mathbf{b})$ can be considered as a  $\sym_a \times \sym_b$-set and  $\kring \hfi (\mathbf{a}, \mathbf{b})$  as a  $\sym_a \times \sym_b$-module.
\end{rem}

The  inclusion $\iota_{a,b}$ determines $\sym_a \subset \sym_b$ and hence  the Young subgroup $\sym_a \times \sym_{b-a} \subset \sym_b$.

\begin{lem}
\label{lem:perm}
For $a \leq b \in \nat$, considered as a  $\sym_b \times \sym_s$-set, there is an isomorphism
 $$
 \hfi (\mathbf{a}, \mathbf{b})\cong (\sym_b \times \sym_a) / (\sym_{b-a} \times \Delta \sym_a),
 $$
  where $\Delta \sym_a \subset \sym_b \times \sym_a$ is the diagonal inclusion. 

Hence there is an isomorphism of $\sym_a\op \times \sym_b$-sets
$$
\hfi (\mathbf{a}, \mathbf{b})
\cong 
\sym_b / \sym_{b-a}
$$
with the regular left $\sym_b$-action  and $\sym_a $ acting via right multiplication on $\sym_b$; i.e., for $g, g' \in \sym_b $ and $h \in \sym_a$, $g [g'] h = [gg'h]$. 
\end{lem}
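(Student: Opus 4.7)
The plan is a standard orbit-stabilizer argument applied twice. Using Remark \ref{rem:op} to identify $\sym_a\op$ with $\sym_a$, the action of $\sym_b\times \sym_a$ on $\hfi(\mathbf{a},\mathbf{b})$ becomes $(\tau,\sigma)\cdot f = \tau\circ f\circ \sigma^{-1}$. I would first check that this action is transitive: given $f,f'\in \hfi(\mathbf{a},\mathbf{b})$, extend both to bijections $\mathbf{b}\to\mathbf{b}$ to exhibit a $\tau\in\sym_b$ with $\tau\circ f=f'$, so trivially $(\tau,1)\cdot f = f'$. Thus, choosing $\iota_{a,b}$ as basepoint, $\hfi(\mathbf{a},\mathbf{b})\cong (\sym_b\times \sym_a)/\mathrm{Stab}(\iota_{a,b})$ as $\sym_b\times \sym_a$-sets.

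Next I compute the stabilizer. A pair $(\tau,\sigma)$ fixes $\iota_{a,b}$ precisely when $\tau\circ\iota_{a,b} = \iota_{a,b}\circ\sigma$. The right hand side is the injection $\mathbf{a}\hookrightarrow \mathbf{b}$ with image $\mathbf{a}$ restricting to $\sigma$, so $\tau$ must preserve $\mathbf{a}\subset \mathbf{b}$ setwise, and hence also $\{a+1,\dots,b\}$, and restrict on $\mathbf{a}$ to $\sigma$. Writing such a $\tau$ as $\widetilde\sigma\cdot \tau'$ with $\widetilde\sigma\in\sym_a\subset\sym_b$ the Young embedding of $\sigma$ and $\tau'\in\sym_{b-a}$ acting on $\{a+1,\dots,b\}$, the stabilizer equals $\{(\widetilde\sigma\tau',\sigma)\}\subset \sym_b\times \sym_a$. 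The two subgroups $\sym_{b-a}\subset \sym_b\times\{1\}$ and $\Delta\sym_a$ commute and meet trivially, so their product is the direct product $\sym_{b-a}\times \Delta\sym_a$, yielding the first displayed isomorphism.

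For the second isomorphism, I would argue directly rather than deduce it. The map $\sym_b\to \hfi(\mathbf{a},\mathbf{b})$, $g\mapsto g\circ \iota_{a,b}$, is $\sym_b$-equivariant for the regular left action and is surjective because any injection extends to a bijection; its right stabilizer at $\iota_{a,b}$ consists of those $g\in \sym_b$ fixing $\mathbf{a}$ pointwise, that is the subgroup $\sym_{b-a}\subset \sym_b$. Hence $\hfi(\mathbf{a},\mathbf{b})\cong \sym_b/\sym_{b-a}$ as left $\sym_b$-sets. The compatibility $\widetilde h\circ \iota_{a,b} = \iota_{a,b}\circ h$ for $h\in\sym_a$ with Young embedding $\widetilde h\in\sym_b$ then shows that pre-composition by $h$ corresponds to right multiplication by $\widetilde h$ on $\sym_b/\sym_{b-a}$, giving the formula $g[g']h=[gg'h]$.

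The only potential pitfalls in the argument are bookkeeping: tracking the Young embedding $\sym_a\times \sym_{b-a}\hookrightarrow \sym_b$ and the identification of $\sym_a\op$ with $\sym_a$ via inversion. There is no substantive obstacle.
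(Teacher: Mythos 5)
Your proof is correct and follows essentially the same orbit--stabilizer approach as the paper: identify $\iota_{a,b}$ as a basepoint, compute its stabilizer to be $\sym_{b-a}\times\Delta\sym_a$, and translate the actions. The only (inessential) difference is that for the second isomorphism you argue directly via $g\mapsto g\circ\iota_{a,b}$, whereas the paper deduces it from the first isomorphism by quotienting by $\Delta\sym_a$; you also leave implicit the well-definedness of the right $\sym_a$-action on $\sym_b/\sym_{b-a}$ (it relies on $\sym_a$ and $\sym_{b-a}$ commuting inside $\sym_b$), a point the paper notes explicitly.
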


\begin{proof} The stabilizer of $\iota_{a,b}$ for the left  $\sym_b \times \sym_a$-action on $\hfi (\mathbf{a}, \mathbf{b})$ is the subgroup $(\sym_{b-a} \times \Delta \sym_a)$, which gives the first statement. 

As a left $\sym_b$-set, the quotient by the action of the diagonal subgroup  $\Delta \sym_a$ clearly identifies as $\sym_b / \sym_{b-a}$. It remains to consider the $\sym_a$-action. Considered now as a right action, one checks that this is as given. 
(Note that this is well-defined, since the right action of $\sym_a$ on $\sym_b$ commutes with that of $\sym_{b-a}$, due to the inclusion $\sym_a \times \sym_{b-a} \subset \sym_b$.) 
\end{proof}

We next identify the composition factors of the permutation module $\kring \hfi (\mathbf{a}, \mathbf{b})$. For this, recall that the set of isomorphism classes of simple $\sym_b \times \sym_a$-modules is indexed by pairs of partitions $(\lambda \vdash b, \nu \vdash a)$, corresponding to $S^\lambda \boxtimes S^\nu$. 

Recall the notation $\hs{\lambda}$ introduced in Notation \ref{nota:hs}, which has the property that, for $\mu \preceq \lambda$, $\lambda/\mu$ is a horizontal strip if and only if $\hs{\lambda} \preceq \mu$.

\begin{prop}
\label{prop:inj_comp_fact}
For $a \leq b \in \nat$, there is an isomorphism of  $\sym_b \times \sym_a$-modules 
$$
\kring \hfi (\mathbf{a}, \mathbf{b})
\cong 
\bigoplus_{ \substack {\lambda \vdash b, \nu \vdash a\\ \hs{\lambda} \preceq \nu \preceq \lambda  }}
S^\lambda \boxtimes S^\nu.
$$

In particular, the $\sym_b \times \sym_a$-module $\kring \hfi (\mathbf{a}, \mathbf{b})$ is multiplicity-free.
\end{prop}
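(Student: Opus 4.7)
The plan is to compute composition factors of the permutation module using Lemma \ref{lem:perm} combined with the restriction formula Proposition \ref{prop:restrict_S_lambda}. Since we are working in characteristic zero, multiplicities can be read off from dimensions of invariant subspaces, so the argument will reduce to identifying horizontal strips.

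First, by Lemma \ref{lem:perm} (and Remark \ref{rem:op}), $\kring \hfi(\mathbf{a},\mathbf{b})$ is the permutation module
\[
\kring \hfi(\mathbf{a},\mathbf{b}) \cong \kring\bigl[(\sym_b \times \sym_a)/(\sym_{b-a} \times \Delta\sym_a)\bigr].
\]
For any simple $\sym_b \times \sym_a$-module $V = S^\lambda \boxtimes S^\nu$ (with $\lambda \vdash b$, $\nu \vdash a$), Frobenius reciprocity combined with the self-duality of simple $\sym_n$-representations gives the multiplicity formula
\[
\bigl[\kring \hfi(\mathbf{a},\mathbf{b}) : S^\lambda \boxtimes S^\nu\bigr]
= \dim\bigl(S^\lambda \boxtimes S^\nu\bigr)^{\sym_{b-a} \times \Delta\sym_a}.
\]
So the goal becomes showing this dimension equals $1$ when $\hs{\lambda} \preceq \nu \preceq \lambda$ and $0$ otherwise.

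Next I take the invariants in two stages. Since the $\sym_{b-a}$ factor acts trivially on $S^\nu$, I first compute $(S^\lambda)^{\sym_{b-a}}$ as an $\sym_a$-module. By Proposition \ref{prop:restrict_S_lambda},
\[
S^\lambda \downarrow^{\sym_b}_{\sym_a \times \sym_{b-a}} \cong \bigoplus_{\mu \preceq \lambda,\ \mu \vdash a} S^\mu \boxtimes S^{\lambda/\mu},
\]
so taking $\sym_{b-a}$-invariants and using that $(S^{\lambda/\mu})^{\sym_{b-a}}$ is $\kring$ if $\lambda/\mu$ is a horizontal strip and $0$ otherwise (recalled in Section \ref{subsect:first_apps}), together with Lemma \ref{lem:hs} that rewrites the horizontal strip condition as $\hs{\lambda} \preceq \mu$, yields
\[
(S^\lambda)^{\sym_{b-a}} \cong \bigoplus_{\hs{\lambda} \preceq \mu \preceq \lambda} S^\mu
\]
as $\sym_a$-modules.

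Finally, I take $\Delta\sym_a$-invariants of the remaining factor $(S^\lambda)^{\sym_{b-a}} \otimes S^\nu$, where $\sym_a$ acts diagonally. By (\ref{eqn:invt_tensor_product_simples}), $(S^\mu \otimes S^\nu)^{\sym_a}$ has dimension $\delta_{\mu,\nu}$, hence
\[
\bigl(S^\lambda \boxtimes S^\nu\bigr)^{\sym_{b-a}\times \Delta\sym_a}
\cong \begin{cases} \kring & \text{if } \hs{\lambda} \preceq \nu \preceq \lambda, \\ 0 & \text{otherwise.} \end{cases}
\]
This gives both the indexing set and the multiplicity-one statement; semisimplicity then packages this as the claimed direct sum decomposition. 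There is no serious obstacle here: the only subtlety is keeping track of which factor each copy of $\sym_a$ acts on (the diagonal copy versus the right factor of $\sym_b \times \sym_a$), which is handled transparently by the two-stage invariants computation.
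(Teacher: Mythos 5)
Your proof is correct and follows essentially the same route as the paper: identify $\kring\hfi(\mathbf{a},\mathbf{b})$ as a permutation module via Lemma \ref{lem:perm}, reduce multiplicities to dimensions of $\sym_{b-a}\times\Delta\sym_a$-invariants, decompose the restriction of $S^\lambda$ using Proposition \ref{prop:restrict_S_lambda}, and invoke the horizontal-strip criterion for $(S^{\lambda/\mu})^{\sym_{b-a}}$ together with (\ref{eqn:invt_tensor_product_simples}). The only superficial difference is that you take the invariants in two stages while the paper takes them simultaneously; the computation is the same.
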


\begin{proof}
To prove the result, it suffices to show that 
\[
\hom_{\sym_b \times \sym_a} ( \kring \hfi (\mathbf{a}, \mathbf{b}), S^\lambda \boxtimes S^\nu) 
= 
\left\{ 
\begin{array}{ll}
\kring & \lambda \vdash b, \  \nu \vdash a, \ \hs{\lambda} \preceq \nu \preceq \lambda
\\
0 & 
\mbox{otherwise.}
\end{array}
\right.
\]

By Lemma \ref{lem:perm},  $\hom_{\sym_b \times \sym_a} ( \kring \hfi (\mathbf{a}, \mathbf{b}), S^\lambda \boxtimes S^\nu)
\cong 
(S^\lambda \boxtimes S^\nu)^{\sym_{b-a} \times \Delta \sym_a}$. The right hand side only depends on the restriction of the $\sym_b$-action on $S^\lambda$ to $\sym_{b-a} \times \sym_a$ and, by Proposition \ref{prop:restrict_S_lambda},  identifies with 
\[
\big(\bigoplus_{ \substack {\mu \vdash a\\ \mu \preceq \lambda }}
S^{\lambda /\mu} \boxtimes (S^\mu \otimes S^\nu)\big)^{\sym_{b-a} \times  \sym_a},
\]
where $\sym_{b-a}$ acts on $S^{\lambda /\mu}$ and $\sym_a$ acts diagonally on $S^\mu \otimes S^\nu$.  This identifies as:
\[
\bigoplus_{ \substack {\mu \vdash a\\ \mu \preceq \lambda }}
(S^{\lambda /\mu})^{\sym_{b-a}} \otimes (S^\mu \otimes S^\nu)^{\sym_a}.
\]

Now, as recalled in Section \ref{subsect:first_apps},
\[
(S^{\lambda /\mu})^{\sym_{b-a}}
\cong 
\left\{ 
\begin{array}{ll}
\kring & \hs{\lambda}\preceq \mu \preceq \lambda
\\
0 &\mbox{ otherwise,}
\end{array}
\right.
\]
 and, as in equation (\ref{eqn:invt_tensor_product_simples}), 
$$(S^\mu \otimes S^\nu)^{\sym_a}=
\left\{
\begin{array}{ll}
\kring & \mu = \nu \\
0 & \mbox{ otherwise.}
\end{array}
\right.
$$
The result follows.
\end{proof}

\subsection{An explicit map}

In this subsection, Proposition \ref{prop:inj_comp_fact} is refined by exhibiting an explicit non-trivial map from $ \kring \hfi (\mathbf{a}, \mathbf{b})$ to $S^\lambda \boxtimes S^\nu$.

\begin{nota}
\label{nota:tab_relative}
For $\nu \preceq \lambda$, let $\tab(\lambda; \nu) \subset \tab(\lambda)$ be the set of standard tableaux $T$ for which the restriction $T|_\nu$ to the diagram $\nu$ belongs to $\tab (\nu)$ (i.e., contains only the numbers $\{ 1, \ldots , |\nu|\}$). 
\end{nota}

The following is clear:

\begin{lem}
\label{lem:tab_relative}
For $\nu \preceq \lambda$, the map 
\begin{eqnarray*}
\tab(\lambda; \nu)
&\rightarrow & 
\tab (\nu) \times \tab (\lambda/\nu)
\\
T &\mapsto &(T|_\nu, T|_{\lambda/\nu})
\end{eqnarray*}
 is a bijection, 
 where the restriction $T|_{\lambda/ \nu}$ to the skew diagram $\lambda /\nu$ is considered as an element of $\tab (\lambda/\nu)$ by relabelling (i.e., subtracting $|\nu|$ from the entries).
 \end{lem}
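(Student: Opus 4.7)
The plan is to verify the map is well-defined, then construct an explicit inverse, the key point being that the two pieces $T|_\nu$ and $T|_{\lambda/\nu}$ fit together automatically because of a disjointness-of-entries argument.

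First I would check that the map lands in $\tab(\nu) \times \tab(\lambda/\nu)$. If $T \in \tab(\lambda;\nu)$, then by definition $T|_\nu$ uses precisely the entries $\{1, \ldots, |\nu|\}$. Rows and columns of $T|_\nu$ are contained in rows and columns of $T$, so monotonicity along them is inherited, giving $T|_\nu \in \tab(\nu)$. Similarly, the remaining entries $\{|\nu|+1, \ldots, |\lambda|\}$ fill $\lambda/\nu$ and inherit monotonicity along rows and columns of $\lambda/\nu$; after subtracting $|\nu|$ this yields an element of $\tab(\lambda/\nu)$.

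Next I would construct the inverse. Given $(T_1, T_2) \in \tab(\nu) \times \tab(\lambda/\nu)$, define $T$ on the boxes of $\lambda$ by $T = T_1$ on $\nu$ and $T = T_2 + |\nu|$ on $\lambda/\nu$. Then $T$ is a bijection between boxes of $\lambda$ and $\mathbf{|\lambda|}$, and $T|_\nu = T_1 \in \tab(\nu)$, so the membership in $\tab(\lambda;\nu)$ will be automatic once standardness is checked.

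The one thing to verify is standardness of $T$ along a row or column of $\lambda$ that crosses the boundary of $\nu$. Here the key observation is that every entry in $\lambda/\nu$ is at least $|\nu|+1$, which is strictly larger than every entry of $\nu$. So if two boxes in the same row (or column) of $\lambda$ lie one in $\nu$ and one in $\lambda/\nu$, the one in $\nu$ has the smaller row or column index (since $\nu$ is a Young subdiagram) and also the smaller entry, so the monotonicity condition is satisfied. Since the two assignments are mutually inverse on the nose, this establishes the bijection. No obstacle is anticipated; the only subtlety is the boundary check just described.
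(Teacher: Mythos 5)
Your proof is correct, and it is exactly the argument the paper has in mind — the paper states Lemma \ref{lem:tab_relative} with the preamble ``The following is clear'' and gives no explicit proof, so your splitting-and-gluing verification, with the boundary check that every entry of $\lambda/\nu$ exceeds every entry of $\nu$, is the intended one.
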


\begin{prop}
\label{prop:X}
For $\lambda \vdash b$, $\nu \vdash a$ such that $\hs{\lambda} \preceq \nu \preceq \lambda$, the element 
\[
X_{\lambda, \nu}:= 
\sum _{T \in \tab(\lambda; \nu)}
\beta_{T|_{\lambda/\nu}} w_T \otimes w_{T|_\nu} 
\in S^\lambda \otimes S^\nu
\]
is a generator of $(S^\lambda \boxtimes S^\nu)^{\sym_{b-a} \times \Delta \sym_a}
\cong 
\kring$, where the coefficients $\beta_*$ are given by Proposition \ref{prop:invt_skew}.

Hence a generator of $\hom _{\sym_b \times \sym_a} (\kring \hfi (\mathbf{a}, \mathbf{b}), S^\lambda \boxtimes S^\nu)$ is given by
$\iota_{a,b} \mapsto   X_{\lambda, \nu}$.
\end{prop}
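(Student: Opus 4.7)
The plan is to exploit the branching isomorphism of Proposition \ref{prop:restrict_S_lambda} together with the two explicit invariants constructed in Lemma \ref{lem:invt_prod_tens} and Proposition \ref{prop:invt_skew}. From the proof of Proposition \ref{prop:inj_comp_fact}, under the hypothesis $\hs{\lambda} \preceq \nu \preceq \lambda$, the invariants $(S^\lambda \boxtimes S^\nu)^{\sym_{b-a} \times \Delta \sym_a}$ form a one-dimensional subspace. Moreover $X_{\lambda,\nu}$ is manifestly non-zero, since the vectors $w_T \otimes w_{T|_\nu}$ indexed by $T \in \tab(\lambda;\nu)$ are linearly independent and, by Proposition \ref{prop:invt_skew}, each coefficient $\beta_{T|_{\lambda/\nu}}$ lies in $\kring^\times$. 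The first assertion therefore reduces to checking that $X_{\lambda,\nu}$ is $\sym_{b-a} \times \Delta\sym_a$-invariant.

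To establish invariance, I would transport $X_{\lambda,\nu}$ through the branching isomorphism
\[
S^\lambda\downarrow_{\sym_a \times \sym_{b-a}} \;\cong\; \bigoplus_{\mu \preceq \lambda} S^\mu \boxtimes S^{\lambda/\mu}.
\]
The skew basis $\Phi_\bullet$ recalled in the previous subsection is normalized precisely so that, for $T \in \tab(\lambda;\mu)$, the vector $w_T$ corresponds to $w_{T|_\mu} \otimes \Phi_{T|_{\lambda/\mu}}$ in the $\mu$-summand. Applying this to every term of $X_{\lambda,\nu}$, all contributions lie in the $\mu=\nu$ component; using the bijection $\tab(\lambda;\nu) \cong \tab(\nu) \times \tab(\lambda/\nu)$ of Lemma \ref{lem:tab_relative}, the defining sum factorizes as
\[
X_{\lambda,\nu} \;\longleftrightarrow\; \Big(\sum_{S \in \tab(\nu)} w_S \otimes w_S\Big) \;\otimes\; \Big(\sum_{U \in \tab(\lambda/\nu)} \beta_U \Phi_U\Big),
\]
where the first tensor factor sits in the outer copies $S^\nu \otimes S^\nu$ and the second in the middle factor $S^{\lambda/\nu}$. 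Since $\sym_{b-a}$ acts only on $S^{\lambda/\nu}$ while $\Delta\sym_a$ acts diagonally on the two copies of $S^\nu$, Proposition \ref{prop:invt_skew} and Lemma \ref{lem:invt_prod_tens} together give the required invariance, and hence $X_{\lambda,\nu}$ generates the one-dimensional invariant space.

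The second assertion follows immediately from Lemma \ref{lem:perm}: since $\kring\hfi(\mathbf{a},\mathbf{b})$ is the permutation module on $(\sym_b \times \sym_a)/(\sym_{b-a} \times \Delta\sym_a)$ with canonical generator $[\iota_{a,b}]$, the adjunction
\[
\hom_{\sym_b \times \sym_a}\bigl(\kring\hfi(\mathbf{a},\mathbf{b}),\, S^\lambda \boxtimes S^\nu\bigr) \;\xrightarrow{\cong}\; (S^\lambda \boxtimes S^\nu)^{\sym_{b-a} \times \Delta\sym_a}, \quad f \mapsto f([\iota_{a,b}]),
\]
transports the generator just produced to the morphism $[\iota_{a,b}] \mapsto X_{\lambda,\nu}$.

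The one non-formal ingredient is the compatibility of the Young orthonormal and skew bases invoked above: that $w_T \in S^\lambda$ really does correspond to $w_{T|_\nu} \otimes \Phi_{T|_{\lambda/\nu}}$ under the branching isomorphism. This should be the main obstacle; it can either be cited from \cite[Theorem 3.5.5]{MR2643487} or verified directly by comparing the action of the Coxeter generators of $\sym_a$ and $\sym_{b-a}$ on the two sides using the explicit formulae recalled in the previous subsection. Once this identification is granted, the remainder of the argument is essentially bookkeeping with the bijection of Lemma \ref{lem:tab_relative}.
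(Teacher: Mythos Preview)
Your proposal is correct and follows essentially the same approach as the paper: both arguments reduce to the compatibility of the Young orthonormal basis with the skew basis under the branching decomposition (so that $w_T$ corresponds to $w_{T|_\nu}\otimes\Phi_{T|_{\lambda/\nu}}$), then invoke Lemma \ref{lem:invt_prod_tens} and Proposition \ref{prop:invt_skew} to conclude invariance, and deduce the second statement from the permutation-module description of $\kring\hfi(\mathbf{a},\mathbf{b})$. Your write-up is somewhat more explicit than the paper's (in particular you spell out the factorization of the sum and the non-vanishing), but there is no substantive difference in method.
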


\begin{proof}
Suppose that $T \in \tab(\lambda; \nu)$; then restricting to $\sym_{b-a} \times \sym_a$ and exploiting the bijection of Lemma \ref{lem:tab_relative}, the action of the relevant Coxeter generators on $w_T$ corresponds to that on 
\[
\Phi_{T|_{\lambda/\nu}} \otimes w_{T|_\nu}
\]
where $\sym_{b-a}$ acts on the first factor and $\sym_a$ on the second.  
The first statement then follows by putting together Lemma \ref{lem:invt_prod_tens} and  Proposition \ref{prop:invt_skew}.

The second statement then follows from Proposition \ref{prop:inj_comp_fact} and its proof.
\end{proof}

\subsection{Comparing representations}

Our ultimate goal is to understand the cokernel of 
\[
\tr_{a,b} : 
\kring \hfi (\mathbf{a-1 }, \mathbf{b})\uparrow_{\sym_{a-1}} ^{\sym_a}
\rightarrow 
\kring \hfi (\mathbf{a}, \mathbf{b})
.
\]
In this subsection, we only compare these representations.

If $1 \leq a \leq b$, then Proposition \ref{prop:inj_comp_fact} applies also to $\kring \hfi (\mathbf{a-1}, \mathbf{b})$, considered as a $\sym_b \times \sym_{a-1}$-module. 
 One can then induce up to a $\sym_b \times \sym_a$-module, giving the following  Corollary, in which the second isomorphism is given by Pieri's rule:

\begin{cor}
\label{cor:restrict_naive}
For $1 \leq a \leq b \in \nat$, there is an isomorphism of  $\sym_b \times \sym_a$-modules 
$$
\kring \hfi (\mathbf{a-1 }, \mathbf{b})\uparrow_{\sym_{a-1}} ^{\sym_a}
\cong 
\bigoplus_{ \substack {\lambda \vdash b, \kappa \vdash a-1\\\hs{\lambda}\preceq \kappa \preceq \lambda  }}
S^\lambda \boxtimes (S^\kappa \uparrow_{\sym_{a-1}} ^{\sym_a})
\cong 
\bigoplus_{ \substack {\lambda \vdash b,   \kappa \vdash a-1\\ \hs{\lambda}\preceq  \kappa \preceq \lambda  }}
\bigoplus_{ \substack {\mu \vdash a \\ \kappa \preceq \mu }}
S^\lambda \boxtimes S^\mu .
$$
\end{cor}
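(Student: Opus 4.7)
The proof is essentially a formal consequence of Proposition \ref{prop:inj_comp_fact} together with the classical branching/Pieri rule, so the plan is to verify carefully that these two ingredients compose as claimed.

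First, I would apply Proposition \ref{prop:inj_comp_fact} with $a$ replaced by $a-1$. Under the hypothesis $1 \leq a \leq b$, one has $0 \leq a-1 \leq b$, and the Proposition (which remains valid in the degenerate case $a-1=0$, where the only allowed pair is $(\lambda, \kappa) = (\lambda, \emptyset)$ with $\hs{\lambda}=\emptyset$, i.e.\ $\lambda = (b)$) yields
\[
\kring \hfi(\mathbf{a-1}, \mathbf{b}) \;\cong\; \bigoplus_{\substack{\lambda \vdash b, \, \kappa \vdash a-1 \\ \hs{\lambda} \preceq \kappa \preceq \lambda}} S^\lambda \boxtimes S^\kappa
\]
as $\sym_b \times \sym_{a-1}$-modules.

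Next, I would induce along the inclusion $\sym_{a-1} \subset \sym_a$ on the second tensor factor. Since induction is additive and acts only on the $\sym_{a-1}$-factor of the exterior tensor product (with the $\sym_b$-action left untouched), applying $(-)\uparrow_{\sym_{a-1}}^{\sym_a}$ termwise gives the first isomorphism displayed in the statement:
\[
\kring \hfi(\mathbf{a-1},\mathbf{b}) \uparrow_{\sym_{a-1}}^{\sym_a} \;\cong\; \bigoplus_{\substack{\lambda \vdash b, \, \kappa \vdash a-1 \\ \hs{\lambda} \preceq \kappa \preceq \lambda}} S^\lambda \boxtimes \bigl(S^\kappa \uparrow_{\sym_{a-1}}^{\sym_a}\bigr).
\]

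Finally, I would invoke Pieri's rule (equivalently, the branching rule for $\sym_a \downarrow \sym_{a-1}$ combined with Frobenius reciprocity) to rewrite, for each $\kappa \vdash a-1$,
\[
S^\kappa \uparrow_{\sym_{a-1}}^{\sym_a} \;\cong\; \bigoplus_{\substack{\mu \vdash a \\ \mu/\kappa \text{ a single box}}} S^\mu.
\]
Since $|\mu|-|\kappa|=1$, the condition that $\mu/\kappa$ be a single box is equivalent to $\kappa \preceq \mu$; substituting this into the previous display and rearranging the resulting double sum produces exactly the second isomorphism of the statement. There is no real obstacle: the only point to watch is that the induction affects only the right-hand factor of each $S^\lambda \boxtimes S^\kappa$, which is immediate from the definition of exterior tensor products of modules over a product of groups.
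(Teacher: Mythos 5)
Your argument is exactly the one the paper intends: apply Proposition \ref{prop:inj_comp_fact} with $a$ replaced by $a-1$, induce up along $\sym_{a-1}\subset\sym_a$ on the second tensor factor, and decompose $S^\kappa\uparrow_{\sym_{a-1}}^{\sym_a}$ via Pieri's rule. The extra care you took with the degenerate case $a-1=0$ is correct and harmless, and there are no gaps.
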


 As an immediate  consequence, one has:

\begin{lem}
\label{lem:comp_factor_max_hor_strip}
For $a \leq b \in \nat$, a composition factor $S^\lambda \boxtimes S^\nu$ of $\kring \hfi (\mathbf{a}, \mathbf{b})$ does not occur in $\kring \hfi (\mathbf{a-1 }, \mathbf{b})\uparrow_{\sym_{a-1}} ^{\sym_a}$ if and only if 
 $\nu  = \hs{\lambda}$ and $\lambda_1 = b-a$.

Hence, there is a bijection between the  set of such composition factors and both of the following:
\begin{enumerate}
\item 
$\{ \lambda \vdash b \  | \ \lambda_1 = b-a\}$;
\item 
$\{ \nu \vdash a \ | \ \nu_1 \leq b-a\}$.
\end{enumerate}
\end{lem}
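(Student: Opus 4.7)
The plan is to reduce the question to a combinatorial comparison of the two indexing sets of composition factors supplied by Proposition~\ref{prop:inj_comp_fact} and Corollary~\ref{cor:restrict_naive}, and then to carry out a simple manipulation with removable boxes.

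First, I would set up the framework. By Proposition~\ref{prop:inj_comp_fact}, a composition factor of $\kring\hfi(\mathbf{a},\mathbf{b})$ is an $S^\lambda\boxtimes S^\nu$ with $\hs{\lambda}\preceq\nu\preceq\lambda$. By Corollary~\ref{cor:restrict_naive}, $S^\lambda\boxtimes S^\nu$ occurs in $\kring\hfi(\mathbf{a-1},\mathbf{b})\uparrow_{\sym_{a-1}}^{\sym_a}$ if and only if there exists $\kappa\vdash a-1$ with $\hs{\lambda}\preceq\kappa\preceq\lambda$ and $\kappa\preceq\nu$ (the last two together being equivalent, since $\nu\preceq\lambda$, to $\hs{\lambda}\preceq\kappa\preceq\nu$). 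So the lemma reduces to showing:
\emph{no partition $\kappa\vdash a-1$ satisfies $\hs{\lambda}\preceq\kappa\preceq\nu$ if and only if $\nu=\hs{\lambda}$ and $\lambda_1=b-a$.}

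For the ``$\Leftarrow$'' direction, if $\nu=\hs{\lambda}$ then any admissible $\kappa$ would have to equal $\hs{\lambda}$, which has $|\hs{\lambda}|=b-\lambda_1=a$, whereas $|\kappa|=a-1$. For the ``$\Rightarrow$'' direction, I would argue contrapositively: assuming $\nu\neq\hs{\lambda}$, I construct a valid $\kappa$ by removing a single box from $\nu$ lying in the skew shape $\nu/\hs{\lambda}$. Specifically, let $i$ be maximal with $\nu_i>\hs{\lambda}_i$; then $\nu_{i+1}=\hs{\lambda}_{i+1}\leq\hs{\lambda}_i<\nu_i$, so $(i,\nu_i)$ is a removable corner of $\nu$, and removing it gives a partition $\kappa\vdash a-1$ with $\hs{\lambda}\preceq\kappa\preceq\nu$. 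Finally, observing that if $\nu=\hs{\lambda}$ with $\nu\vdash a$, then $a=|\hs{\lambda}|=b-\lambda_1$ forces $\lambda_1=b-a$, completes the equivalence.

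The two bijections in the last sentence are then immediate: (1) the map $\lambda\mapsto (\lambda,\hs{\lambda})$ identifies partitions $\lambda\vdash b$ with $\lambda_1=b-a$ with the critical pairs; (2) composing with $\lambda\mapsto\hs{\lambda}$ gives the bijection onto $\{\nu\vdash a\mid\nu_1\leq b-a\}$, whose inverse prepends a row of length $b-a$ (valid precisely when $b-a\geq\nu_1$).

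There is no real obstacle here: the only subtle point is producing the removable box, and this is handled cleanly by choosing the last row where $\nu$ strictly exceeds $\hs{\lambda}$, which automatically supplies a removable corner of $\nu$ lying outside $\hs{\lambda}$.
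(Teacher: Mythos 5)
Your proof is correct and follows essentially the same approach as the paper's: reduce via Proposition~\ref{prop:inj_comp_fact} and Corollary~\ref{cor:restrict_naive} to the existence of $\kappa\vdash a-1$ with $\hs{\lambda}\preceq\kappa\preceq\nu$, then (when $\nu\neq\hs{\lambda}$) construct $\kappa$ by removing one box from $\nu$. The only difference is which removable box you pick: you take the outer corner of $\nu$ in the lowest row where $\nu$ strictly exceeds $\hs{\lambda}$, whereas the paper works column-wise (the rightmost non-empty column of $\lambda$ in which $\lambda/\nu$ is empty) and leaves the check that the result is a partition to the reader; your choice makes removability immediate, which is a small tidiness gain but not a different argument.
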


\begin{proof}
Suppose that $S^\lambda \boxtimes S^\nu$ is a composition factor of $\kring \hfi (\mathbf{a}, \mathbf{b})$ and that $\lambda/\nu$ is not the maximal horizontal strip (i.e., $\hs {\lambda} \preceq \nu$ is not an equality). Then consider the rightmost non-empty column of $\lambda$ in which $\lambda/\nu$ is empty; such a column exists by the hypothesis. Let $\kappa \vdash a-1$ be the unique partition obtained from $\nu$ by decreasing the length of this column by one (the verification that $\kappa $ is a partition is left to the reader).

By construction, $\lambda/\kappa$ is a horizontal strip and $S^\kappa \uparrow_{\sym_{a-1}} ^{\sym_a}$ contains $S^\nu$ as a composition factor, by Pieri's rule. It follows from Corollary \ref{cor:restrict_naive} that $S^\lambda \boxtimes S^\nu$ is a composition factor of $\kring \hfi (\mathbf{a-1 }, \mathbf{b})\uparrow_{\sym_{a-1}} ^{\sym_a}$.

If  $\nu = \hs{\lambda}$, then the composition factor $S^\lambda \boxtimes S^\nu$ cannot arise in $\kring \hfi (\mathbf{a-1 }, \mathbf{b})\uparrow_{\sym_{a-1}} ^{\sym_a}$. 
\end{proof}

\section{Calculating the cokernel of $\tr_{a,b}$}
\label{sect:calc}

Fix $1 \leq a \leq b$; our aim is to determine
the cokernel of 
\[
\tr_{a,b} : 
\kring \hfi (\mathbf{a-1 }, \mathbf{b})\uparrow_{\sym_{a-1}} ^{\sym_a}
\rightarrow 
\kring \hfi (\mathbf{a}, \mathbf{b})
\]
as a $\sym_a \op \times \sym_b$-module.

Recall  that, for $\nu \preceq \lambda$, $\lambda/\nu$ is a horizontal strip if and only if $\hs{\lambda} \preceq \nu$. The following strengthens Lemma \ref{lem:comp_factor_max_hor_strip}  to describe $\tr_{a,b}$:

\begin{thm}
\label{thm:coker}
For $1 \leq a\leq b$, there is an isomorphism of $\sym_b\times \sym_a$-modules:
\[
\mathrm{Coker} \tr_{a,b}
\cong 
\bigoplus_{\substack{\lambda \vdash b \\ \lambda_1 =b-a}}
S^\lambda \boxtimes S^{\hs{\lambda}}.
\]
\end{thm}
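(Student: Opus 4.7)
My plan is to exploit the multiplicity-free decomposition of Proposition~\ref{prop:inj_comp_fact}. Since $\kring$ has characteristic zero, each isotypic component $S^\lambda\boxtimes S^\nu$ of $\kring\hfi(\mathbf{a},\mathbf{b})$ lies either entirely in the image of $\tr_{a,b}$ or entirely in the cokernel. The image is the $\sym_b\times\sym_a$-submodule generated by $Y_{a,b}:=\tr_{a,b}(1\otimes[\iota_{a-1,b}])=\sum_{z=a}^{b}(a,z)[\iota_{a,b}]$ (Lemma~\ref{lem:iota_a-1_a}); so $S^\lambda\boxtimes S^\nu\subset\mathrm{Im}(\tr_{a,b})$ if and only if $\phi_{\lambda,\nu}(Y_{a,b})\neq 0$, where $\phi_{\lambda,\nu}$ is the projection of Proposition~\ref{prop:X}. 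The task reduces to characterising when this projection vanishes.

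The element $Y_{a,b}$ is fixed by $G:=\sym_{b-a+1}\times\Delta\sym_{a-1}\subset\sym_b\times\sym_{a-1}$ (the stabiliser of $[\iota_{a-1,b}]$), so $\phi_{\lambda,\nu}(Y_{a,b})$ lies in $(S^\lambda\otimes S^\nu)^{G}$. Using Proposition~\ref{prop:restrict_S_lambda} to decompose $S^\lambda|_{\sym_{a-1}\times\sym_{b-a+1}}$ and the branching rule for $S^\nu|_{\sym_{a-1}}$, these invariants identify with
\[
\bigoplus_{\rho\vdash a-1}(S^{\lambda/\rho})^{\sym_{b-a+1}}\otimes\hom_{\sym_{a-1}}(S^\rho,S^\nu),
\]
which is nonzero iff there exists $\rho\vdash a-1$ with $\hs\lambda\preceq\rho$ (so that $\lambda/\rho$ is a horizontal strip, by Lemma~\ref{lem:hs}) and $\nu/\rho$ a single corner box. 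An elementary combinatorial check shows such a $\rho$ exists iff $\nu\neq\hs\lambda$: if $\nu=\hs\lambda$, no $\rho\subsetneq\nu$ satisfies $\hs\lambda\preceq\rho$, while if $\nu\neq\hs\lambda$, choosing the largest $i$ with $\nu_i>\hs\lambda_i$ and removing the rightmost box in row $i$ produces a valid $\rho$. This settles the easy direction: when $\nu=\hs\lambda$ (which forces $\lambda_1=b-a$), $\phi_{\lambda,\hs\lambda}(Y_{a,b})=0$, so $S^\lambda\boxtimes S^{\hs\lambda}$ survives in the cokernel.

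The converse---that $\phi_{\lambda,\nu}(Y_{a,b})\neq 0$ whenever $\nu\neq\hs\lambda$---is the main task. To witness non-vanishing, pick $\kappa$ to be one of the $\rho$'s constructed above (so in particular $\hs\lambda\preceq\kappa\preceq\lambda$ and $\nu/\kappa$ is a single box) and project onto the $S^\kappa$-isotypic component of $S^\nu|_{\sym_{a-1}}$. Setting $\tilde\phi:=\phi_{\lambda,\nu}\circ(\ref{eqn:a-1_a})$, the composite $\pi_\kappa\circ\tilde\phi:\kring\hfi(\mathbf{a-1},\mathbf{b})\to S^\lambda\boxtimes S^\kappa$ is $\sym_b\times\sym_{a-1}$-equivariant, so by Proposition~\ref{prop:inj_comp_fact} applied at $(a-1,b)$ it equals a scalar $c_\kappa\in\kring$ times $\phi_{\lambda,\kappa}$ (well-defined because $\hs\lambda\preceq\kappa\preceq\lambda$). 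Non-vanishing of $c_\kappa$ then yields $\phi_{\lambda,\nu}(Y_{a,b})\neq 0$, which completes the proof.

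The main obstacle is the explicit computation of $c_\kappa$. Concretely, one must verify $\pi_\kappa\bigl(\sum_{z=a}^{b}(a,z)X_{\lambda,\nu}\bigr)=c_\kappa X_{\lambda,\kappa}$ with $c_\kappa\neq 0$; expanding $X_{\lambda,\nu}$ in the Young orthonormal basis via Proposition~\ref{prop:X} and Lemma~\ref{lem:tab_relative}, writing each $(a,z)=s_{z-1}\cdots s_{a+1}s_a s_{a+1}\cdots s_{z-1}$, and iteratively applying the formulae of Young's orthogonal form produces a sum entangling the axial-distance factors $1/r$ and $\sqrt{1-1/r^2}$ with the skew coefficients $\beta_T$ of Proposition~\ref{prop:invt_skew}. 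This is the brute-force calculation anticipated in the introduction. After simplification I expect $c_\kappa$ to emerge as a manifestly positive combination---mirroring the positive ratios $\sqrt{(r-1)/(r+1)}$ arising in Proposition~\ref{prop:invt_skew}---which then delivers the required non-vanishing. No purely structural argument seems to circumvent this step, precisely because the Young orthonormal basis is not a permutation basis (cf.\ Remark~\ref{rem:permutation_versus_Phi}), so the explicit coefficients cannot be avoided.
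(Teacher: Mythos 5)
Your structural reduction is sound and mirrors the paper's in its essentials: both arguments use the multiplicity-freeness of Proposition~\ref{prop:inj_comp_fact}, identify the image of $\tr_{a,b}$ as generated by $\sum_{z}(a,z)[\iota_{a,b}]$, pass to a $\kappa\vdash a-1$ with $\hs{\lambda}\preceq\kappa\preceq\nu\preceq\lambda$ (which the paper isolates as Lemma~\ref{lem:kappa_from_nu}), and reduce to the non-vanishing of a single scalar. Your invariant-theoretic phrasing via $(S^\lambda\otimes S^\nu)^G$ packages the easy direction (that $S^\lambda\boxtimes S^{\hs{\lambda}}$ survives) somewhat more cleanly than the paper's Lemma~\ref{lem:comp_factor_max_hor_strip}, and your combinatorial check for the existence of $\rho$ is correct.

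However, the heart of the theorem is the non-vanishing of the scalar $c_\kappa$ (the paper's $\coeff(\lambda,\nu,\kappa)$ of Notation~\ref{nota:coeff} and Proposition~\ref{prop:coeff}), and you have not proved it --- you explicitly defer this as a computation you \emph{expect} to come out positive. That expectation is, moreover, too optimistic as stated: the expansion does \emph{not} produce a manifestly positive sum, because the axial distance $r_1^\epsilon(T)$ for $T\in\langle\tspec\rangle$ need not be positive (this is exactly the difficulty flagged just before Proposition~\ref{prop:easy}), so individual contributions can be negative. Establishing positivity is the content of all of Section~\ref{subsect:proof}: it requires restricting attention to the subset $\langle\tspec\rangle$ of contributing tableaux (Definition~\ref{defn:Ybar}), the explicit formula of Proposition~\ref{prop:calculate_coeff}, the special case $\tspec = T\rev$ of Proposition~\ref{prop:easy} where positivity of every term does hold (Lemma~\ref{lem:positivity}), and then a genuinely nontrivial inductive factorization (Proposition~\ref{prop:inductive_theta}) that strips a box from $\tspec$ and exhibits $\coeff(\lambda,\nu,\kappa)=(1+\tfrac{1}{r_1(\tspec)})\,\coeff(\lambda,\nu^+,\kappa^+)$, where positivity of the first factor uses only $r_1(\tspec)\notin\{-1,0\}$. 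None of this appears in your proposal, so the proof as written has a genuine gap at its critical step.
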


The proof of this result occupies the whole of this Section.

\subsection{A first reduction}
\label{subsect:first_reduct}

By Lemma \ref{lem:comp_factor_max_hor_strip} and Proposition \ref{prop:X}, to prove the Theorem it is sufficient to prove the following statement:

\begin{prop}
\label{prop:reduction_X}
Suppose that $\lambda \vdash b$ with $\lambda_1> b-a$ and $\nu \vdash a$ such that  $\hs{\lambda}\preceq \nu \preceq \lambda$. Then 
\[
\sum _{i \in \mathbf{b} \backslash \mathbf{a-1}} 
(a, i)  X_{\lambda, \nu} \neq 0,
\]
where $(a,i) \in \sym_{b}$.
\end{prop}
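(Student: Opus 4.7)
The plan is to reduce the claim to a single pairing calculation in the Young orthonormal basis, where positivity of the $\beta$-coefficients from Proposition \ref{prop:invt_skew} will rule out cancellation. First, exploiting that $X_{\lambda,\nu}$ is invariant under the subgroup $\sym_{b-a}\subset\sym_b$ fixing $\{1,\ldots,a\}$ pointwise, the coset decomposition $\sym_{b-a+1}=\bigsqcup_{z=a}^{b}(a,z)\sym_{b-a}$ of the symmetric group $\sym_{b-a+1}\subset\sym_b$ on $\{a,\ldots,b\}$ yields
\[
(b-a)!\sum_{z=a}^{b}(a,z)X_{\lambda,\nu}=\sum_{\sigma\in\sym_{b-a+1}}\sigma X_{\lambda,\nu},
\]
so the proposition amounts to showing that $X_{\lambda,\nu}$ has non-zero orthogonal projection onto $(S^\lambda)^{\sym_{b-a+1}}\otimes S^\nu$, with respect to the inner product making Young's orthonormal basis orthonormal. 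By self-adjointness of this projection, it suffices to exhibit $Y\in(S^\lambda\otimes S^\nu)^{\sym_{b-a+1}\times\Delta\sym_{a-1}}$ with $\langle Y,X_{\lambda,\nu}\rangle\neq 0$.

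By branching (Proposition \ref{prop:restrict_S_lambda}) and Proposition \ref{prop:invt_skew}, this invariant space has a basis indexed by partitions $\kappa\vdash a-1$ with $\hs{\lambda}\preceq\kappa\preceq\nu$. The hypothesis $\lambda_1>b-a$ gives $|\nu|=a>b-\lambda_1=|\hs{\lambda}|$, so $\nu\setminus\hs{\lambda}$ is non-empty; any row $i_*$ with $\nu_{i_*}>\hs{\lambda}_{i_*}$ supplies a removable box $(i_*,\nu_{i_*})$ of $\nu$ outside $\hs{\lambda}$ (removability follows from $\nu_{i_*+1}\leq\hs{\lambda}_{i_*}<\nu_{i_*}$), and I take $\kappa:=\nu\setminus\{(i_*,\nu_{i_*})\}$. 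Combining the invariant generator $\sum_{T_0}\beta_{T_0}^{(\lambda/\kappa)}\Phi_{T_0}\in(S^{\lambda/\kappa})^{\sym_{b-a+1}}$ from Proposition \ref{prop:invt_skew} (transported into $S^\lambda$ via branching) with the diagonal generator from Lemma \ref{lem:invt_prod_tens} along the inclusion $S^\kappa\hookrightarrow S^\nu|_{\sym_{a-1}}$, I define
\[
Y_\kappa:=\sum_{U\in\tab(\kappa)}\;\sum_{T_0\in\tab(\lambda/\kappa)}\beta_{T_0}^{(\lambda/\kappa)}\,w_{T_{U,T_0}}\otimes w_{U'},
\]
where $T_{U,T_0}\in\tab(\lambda;\kappa)$ has $T_{U,T_0}|_\kappa=U$ and $T_{U,T_0}|_{\lambda/\kappa}=T_0$ (after subtracting $a-1$ from entries), and $U'\in\tab(\nu;\kappa)$ extends $U$ by placing the value $a$ at $(i_*,\nu_{i_*})$.

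Expanding $\langle Y_\kappa,X_{\lambda,\nu}\rangle$ using orthonormality, the Kronecker deltas force $T_{U,T_0}\in\tab(\lambda;\nu)$, which in turn forces $T_0((i_*,\nu_{i_*}))=1$; since $(i_*,\nu_{i_*})$ is a valid starting corner of the horizontal strip $\lambda/\kappa$, such $T_0$ correspond bijectively (by erasing the entry $1$ and relabeling) to $\tab(\lambda/\nu)$, and the pairing collapses to
\[
\langle Y_\kappa,X_{\lambda,\nu}\rangle=|\tab(\kappa)|\sum_{T_*\in\tab(\lambda/\nu)}\beta_{\tilde T_*}^{(\lambda/\kappa)}\,\beta_{T_*}^{(\lambda/\nu)}.
\]
Working over $\kring=\mathbb{R}$ and taking the positive branch of the square root throughout the recursion in Proposition \ref{prop:invt_skew}, each $\beta$-coefficient is strictly positive (the condition $r_i\not\in\{-1,0,1\}$ makes $(r_i-1)/(r_i+1)>0$), so the sum is strictly positive, proving the result over $\mathbb{R}$ and, by base change, over any field of characteristic zero. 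The main obstacle is the construction of $Y_\kappa$ so that this final pairing collapses to a sign-definite expression without cancellation; this sign rigidity is precisely the feature that distinguishes the Young orthonormal basis from a permutation basis (cf.\ Remark \ref{rem:permutation_versus_Phi}).
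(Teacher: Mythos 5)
Your proof is correct, and it takes a genuinely different route from the paper. The paper (Lemma \ref{lem:kappa_from_nu} through Section \ref{subsect:proof}) also fixes a $\kappa$ with $\hs{\lambda}\preceq\kappa\preceq\nu\preceq\lambda$ and transfers the problem to the skew representation $S^{\lambda/\kappa}$, but then it attacks the non-vanishing of $\sum_i(1,i)Y_{\lambda,\nu,\kappa}$ by extracting the coefficient $\coeff(\lambda,\nu,\kappa)$ of a single basis vector $\Phi_\tspec$ and proving positivity by a delicate recursion (Proposition \ref{prop:calculate_coeff} and Proposition \ref{prop:inductive_theta}), which must grapple with the fact that the axial distance $\re_1(T)$ can be negative. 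You instead observe that the sum $\sum_{z=a}^b(a,z)$ is, up to the positive factor $(b-a)!$, the full averaging over $\sym_{b-a+1}$ (using $\sym_{b-a}$-invariance of $X_{\lambda,\nu}$), recasting the claim as: the orthogonal projection of $X_{\lambda,\nu}$ onto $(S^\lambda)^{\sym_{b-a+1}}\otimes S^\nu$ is non-zero. You then produce an explicit element $Y_\kappa$ of the $\sym_{b-a+1}\times\Delta\sym_{a-1}$-invariants and show $\langle Y_\kappa,X_{\lambda,\nu}\rangle>0$ by a clean orthonormal-basis expansion, in which the pairing collapses to a sum of products $\beta_{\tilde T_*}^{(\lambda/\kappa)}\beta_{T_*}^{(\lambda/\nu)}$ — all strictly positive since Proposition \ref{prop:invt_skew} forces $r_i\not\in\{-1,0,1\}$. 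This is noticeably shorter and avoids the axial-distance bookkeeping entirely; the trade-off is that the paper's computation also produces the explicit closed formula of Proposition \ref{prop:calculate_coeff} (and hence the exact value of the invariant in the cokernel), whereas yours yields only the non-vanishing, which is all that Proposition \ref{prop:reduction_X} requires. One small point worth making explicit in a write-up: the transport of the invariant element from $(S^{\lambda/\kappa})^{\sym_{b-a+1}}$ into $S^\lambda$ via $w_T\leftrightarrow w_{T|_\kappa}\otimes\Phi_{T|_{\lambda/\kappa}}$ relies on the compatibility of Young's orthogonal form with branching, which the paper itself uses in the proof of Proposition \ref{prop:X}, so this is safe.
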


\begin{rem}
The hypothesis $\lambda_1> b-a$ is equivalent to the assertion that $\nu \neq \hs{\lambda}$. 
\end{rem}

Reasoning as in the proof of Lemma \ref{lem:comp_factor_max_hor_strip}, one has:

\begin{lem}
\label{lem:kappa_from_nu}
Suppose that $\lambda \vdash b$ with $\lambda_1> b-a$ and $\nu \vdash a$ such that  $\hs{\lambda}\preceq \nu \preceq \lambda$. Then there exists $\kappa \vdash a-1$ such that $\hs{\lambda} \preceq \kappa \preceq \nu \preceq \lambda$. In particular, the skew diagram $\lambda/\kappa$ is a horizontal strip. 
\end{lem}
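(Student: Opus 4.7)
The plan is to construct $\kappa$ explicitly by removing a single box from the Young diagram of $\nu$. The two things to check are that $\kappa$ remains a partition and that $\hs{\lambda} \preceq \kappa$; the rest of the chain $\kappa \preceq \nu \preceq \lambda$ is then immediate, and the horizontal-strip conclusion follows at once from Lemma \ref{lem:hs}.

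First I would translate the nested inclusions into inequalities on row lengths: $\hs{\lambda} \preceq \nu \preceq \lambda$ amounts to $\lambda_{i+1} \leq \nu_i \leq \lambda_i$ for every $i \geq 1$. The hypothesis $\lambda_1 > b-a$ gives
\[
|\hs{\lambda}| \ = \ b - \lambda_1 \ < \ b - (b-a) \ = \ a \ = \ |\nu|,
\]
so the inclusion $\hs{\lambda} \preceq \nu$ must be strict. I would then pick any index $j \geq 1$ for which $\hs{\lambda}_j < \nu_j$, i.e., $\lambda_{j+1} < \nu_j$, and set $\kappa_j := \nu_j - 1$ with $\kappa_i := \nu_i$ for $i \neq j$.

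The verification that $\kappa$ is a partition of $a-1$ reduces to two inequalities at row $j$: the inequality $\kappa_{j-1} \geq \kappa_j$ holds because $\nu_{j-1} \geq \nu_j > \nu_j - 1$, and $\kappa_j \geq \kappa_{j+1}$ holds because $\nu_j - 1 \geq \lambda_{j+1} \geq \nu_{j+1}$, using both the choice of $j$ and $\nu \preceq \lambda$. For $\hs{\lambda} \preceq \kappa$, the inequality $\hs{\lambda}_i \leq \kappa_i$ is trivial for $i \neq j$ since $\nu_i = \kappa_i$, while at $i = j$ it is the defining inequality $\lambda_{j+1} \leq \nu_j - 1 = \kappa_j$.

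There is no genuine obstacle here: the argument is purely combinatorial. Its only content is the cardinality comparison that forces some strict inequality $\hs{\lambda}_j < \nu_j$, together with the observation that the very inequality $\lambda_{j+1} < \nu_j$ used to identify such a $j$ simultaneously certifies that the last box of row $j$ of $\nu$ is a removable corner lying outside $\hs{\lambda}$.
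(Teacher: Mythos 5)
Your proof is correct and uses the same core idea as the paper: the hypothesis $\lambda_1 > b-a$ forces $|\hs{\lambda}| < |\nu|$, so some box of $\nu$ lies outside $\hs{\lambda}$ and can be removed to produce $\kappa$. The paper (deferring to the proof of Lemma~\ref{lem:comp_factor_max_hor_strip}) selects this box via a column condition on $\lambda$ and leaves the verification that $\kappa$ is a partition to the reader, while you parametrize by rows and carry the check out in full; the difference is cosmetic.
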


\begin{hyp}
\label{hyp:kappa_nu_lambda}
Henceforth in this Section, we fix $\kappa, \nu , \lambda$ as in Lemma \ref{lem:kappa_from_nu}, so that $\lambda \vdash b$ such that $\lambda_1 > b-a$, $\nu \vdash a$, $\kappa \vdash a-1$ with 
 $
\hs{\lambda} \preceq \kappa \preceq \nu \preceq \lambda.
$ 
\end{hyp}

\begin{nota}
\label{nota:skew_relative}
Denote by $\tab(\lambda/\kappa; \nu/ \kappa) \subset \tab(\lambda/\kappa)$ the set of standard skew tableaux such that the restriction to $\nu/\kappa$ is standard (i.e., the single box is labelled by $1$). 
\end{nota}

The following is clear (cf. Lemma \ref{lem:tab_relative}):

\begin{lem}
Restriction and relabelling yield a bijection:
\begin{eqnarray*}
\tab(\lambda/\kappa; \nu/ \kappa) &\stackrel{\cong}{\rightarrow} & \tab (\lambda / \nu) \\
T & \mapsto & T|_{\lambda/\nu}.
\end{eqnarray*}
\end{lem}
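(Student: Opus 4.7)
The plan is to unwind the definitions. Since $|\nu|-|\kappa|=1$, the skew shape $\nu/\kappa$ consists of a single box; call it $\Box$. For $T \in \tab(\lambda/\kappa)$ the entries are $\{1,\ldots,b-a+1\}$, and $T$ belongs to $\tab(\lambda/\kappa;\nu/\kappa)$ precisely when $T|_{\nu/\kappa}$ is a standard tableau on the one-box shape $\nu/\kappa$, which forces $T(\Box)=1$. So membership in $\tab(\lambda/\kappa;\nu/\kappa)$ is equivalent to the condition that the entry $1$ of $T$ occupies $\Box$, and the remaining entries $\{2,\ldots,b-a+1\}$ fill the boxes of $\lambda/\nu$.

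Given such a $T$, the restriction $T|_{\lambda/\nu}$ is then a filling of $\lambda/\nu$ by $\{2,\ldots,b-a+1\}$; relabelling by subtracting $1$ (as in the convention preceding Lemma \ref{lem:tab_relative}) yields a filling by $\{1,\ldots,b-a\}$ that still increases along rows and down columns, hence lies in $\tab(\lambda/\nu)$. This defines the map in the statement.

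For the inverse, I would take $T' \in \tab(\lambda/\nu)$, shift every entry up by $1$, and place $1$ in the box $\Box$ to obtain a filling $\widetilde{T}$ of $\lambda/\kappa$ by $\{1,\ldots,b-a+1\}$. The only thing to verify is that $\widetilde{T}$ is standard. Entries strictly greater than $1$ continue to increase along rows and columns of $\lambda/\nu$. For the entry $1$ in $\Box$, any box of $\lambda/\kappa$ immediately to its right or immediately below it must lie in $\lambda/\nu$ (since $\nu = \kappa \cup \{\Box\}$ is a partition, $\Box$ is the last box of its row in $\nu$, so any neighbour in $\lambda/\kappa$ to the east or south is outside $\nu$), hence carries a value $\geq 2 > 1$. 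Thus $\widetilde{T}$ is standard, and clearly $\widetilde{T} \in \tab(\lambda/\kappa;\nu/\kappa)$.

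The two constructions are mutually inverse by inspection, giving the bijection. There is no real obstacle here; the statement is just a bookkeeping compatibility between standard skew tableaux across the one-box extension $\kappa \subset \nu$, and it will be used in the sequel to rewrite $X_{\lambda,\nu}$ in a form adapted to the analysis of $\sum_{i} (a,i) X_{\lambda,\nu}$.
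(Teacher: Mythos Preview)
Your proof is correct and follows the only reasonable route: unwind the definitions, using that $\nu/\kappa$ is a single box so that membership in $\tab(\lambda/\kappa;\nu/\kappa)$ amounts to placing $1$ in that box. The paper itself gives no proof, simply declaring the result clear (with a pointer to the analogous Lemma~\ref{lem:tab_relative}); one tiny wording slip in your parenthetical is that ``last box of its row in $\nu$'' handles only the east neighbour, but since $\Box$ is an outer corner of $\nu$ the south neighbour is covered by the same reasoning.
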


\begin{nota}
\label{nota:skew}
\ 
\begin{enumerate}
\item
Let $\tspec \in \tab(\lambda/\kappa; \nu/ \kappa)$ be the unique standard tableau such that $\mathbb{T}|_{\lambda/\nu}= T\rev \in \tab (\lambda/\nu)$.
\item 
Set 
\[
Y_{\lambda,\nu, \kappa}:= 
\sum_{T \in \tab(\lambda/\kappa; \nu/\kappa) }
\beta_{T|_{\lambda/\nu}} \Phi_T 
\in S^{\lambda/\kappa},
\]
\end{enumerate}
\end{nota}

The following is a consequence of Proposition \ref{prop:invt_skew}; it explains the significance of $Y_{\lambda,\nu,\kappa}$.

\begin{lem}
\label{lem:Y_invariance}
The element $Y_{\lambda,\nu,\kappa}$ is invariant under the action of $\sym_{b-a} \subset \sym_{b-a+1}$ corresponding to the inclusion $\{2, \ldots , b-a+1 \} \subset \mathbf{b-a+1}$. 
\end{lem}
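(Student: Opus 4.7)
The plan is to reduce the invariance of $Y_{\lambda,\nu,\kappa}$ under the stated $\sym_{b-a}$-action to the fact, already established in Proposition~\ref{prop:invt_skew}, that $\sum_{T' \in \tab(\lambda/\nu)} \beta_{T'} \Phi_{T'}$ generates the one-dimensional space $(S^{\lambda/\nu})^{\sym_{b-a}}$. Under the bijection $\tab(\lambda/\kappa;\nu/\kappa) \xrightarrow{\cong} \tab(\lambda/\nu)$, $T \mapsto T|_{\lambda/\nu}$, the element $Y_{\lambda,\nu,\kappa}$ corresponds termwise to that invariant generator (after replacing each $\Phi_T \in S^{\lambda/\kappa}$ with $\Phi_{T|_{\lambda/\nu}} \in S^{\lambda/\nu}$, since the coefficients $\beta_{T|_{\lambda/\nu}}$ appearing in $Y_{\lambda,\nu,\kappa}$ are exactly those prescribed by Proposition~\ref{prop:invt_skew}), so it suffices to check that the bijection intertwines the two $\sym_{b-a}$-actions on the respective bases.

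First I identify $\sym_{b-a} \subset \sym_{b-a+1}$ as the subgroup generated by the Coxeter generators $s_j$ of $\sym_{b-a+1}$ for $2 \leq j \leq b-a$, so it is enough to verify that each such $s_j$ fixes $Y_{\lambda,\nu,\kappa}$. Fix $T \in \tab(\lambda/\kappa;\nu/\kappa)$, so that $1$ occupies the unique box of $\nu/\kappa$; the transposition $s_j$ with $j \geq 2$ only swaps the entries $j$ and $j+1$, both of which lie in $\lambda/\nu$. Consequently $s_j T$ still places $1$ at $\nu/\kappa$, and $s_j T$ is standard on $\lambda/\kappa$ if and only if $s_{j-1}(T|_{\lambda/\nu})$ is standard on $\lambda/\nu$. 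The key compatibility, immediate from Definition~\ref{defn:axial} since the relabelling $T \mapsto T|_{\lambda/\nu}$ preserves the coordinates of the boxes holding $j$ and $j+1$, is
\[
r_j(T) \;=\; r_{j-1}(T|_{\lambda/\nu}).
\]

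Using the formulas for the action of Coxeter generators on Young's orthonormal basis of a skew representation, this identity of axial distances ensures that the bijection $T \mapsto T|_{\lambda/\nu}$ transports the action of $s_j$ on the family $\{\Phi_T\}_{T \in \tab(\lambda/\kappa;\nu/\kappa)}$ to the action of $s_{j-1}$ on the family $\{\Phi_{T'}\}_{T' \in \tab(\lambda/\nu)}$. Consequently the action of $s_j$ on $Y_{\lambda,\nu,\kappa}$ matches, coefficient by coefficient, the action of $s_{j-1}$ on the invariant generator of $(S^{\lambda/\nu})^{\sym_{b-a}}$, and the conclusion follows. The only point I would take care to verify is the boundary case where $s_j T$ fails to be standard — equivalently $|r_j(T)| = 1$ — but this degeneracy is transported faithfully to $s_{j-1}(T|_{\lambda/\nu})$ by the same argument, so the corresponding truncated action formula matches there too, with no extra effort required.
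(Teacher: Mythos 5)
Your proof is correct and follows essentially the same route as the paper, which states the lemma "is a consequence of Proposition~\ref{prop:invt_skew}" — you simply spell out the intertwining argument (via the relabelling bijection and the axial-distance identity $r_j(T)=r_{j-1}(T|_{\lambda/\nu})$) that the paper takes for granted.
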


Below we consider $S^{\lambda/\kappa}$  as a representation of $\mathrm{Aut}(\mathbf{b-a +1})$ rather than of the subgroup $\sym_{b-a+1}\subset \sym_b$ used above. Under this convention, the element 
$\sum_{i \in \mathbf{b} \backslash \mathbf{a-1}} (a, i) \in \sym_b$ appearing in Proposition \ref{prop:X} is reindexed to give  $\sum_{i \in \mathbf{b-a +1}} (1, i) \in \sym_{b-a+1}$. 
 
The following  reduces the proof of Theorem \ref{thm:coker} to establishing a property of the skew representation $S^{\lambda/\nu}$:
  
 \begin{prop}
 \label{prop:reduction_Y}
Under Hypothesis \ref{hyp:kappa_nu_lambda},
 \[
 \sum_{i \in \mathbf{b-a +1}} (1, i) Y_{\lambda,\nu, \kappa} \neq 0.
 \]
 \end{prop}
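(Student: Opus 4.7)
The plan is to recognize $\sum_{i\in\mathbf{b-a+1}}(1,i)$ as a scalar multiple of the orthogonal projector onto the $\sym_{b-a+1}$-invariants of $S^{\lambda/\kappa}$, and then to evaluate this projector on $Y_{\lambda,\nu,\kappa}$ by pairing it against the explicit invariant provided by Proposition~\ref{prop:invt_skew}.

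First, by Lemma~\ref{lem:Y_invariance} the element $Y_{\lambda,\nu,\kappa}$ is invariant under $H:=\sym_{\{2,\ldots,b-a+1\}}$, which is the stabilizer of $1$ in $\sym_{b-a+1}$. The transpositions $\{(1,i) : i\in\mathbf{b-a+1}\}$ (with $(1,1)$ the identity) form a complete set of left coset representatives for $H$ in $\sym_{b-a+1}$. Consequently
\[
\sum_{i\in\mathbf{b-a+1}}(1,i)\,Y_{\lambda,\nu,\kappa} \;=\; (b-a+1)\,\pi\bigl(Y_{\lambda,\nu,\kappa}\bigr),
\]
where $\pi$ denotes the orthogonal projection onto $\bigl(S^{\lambda/\kappa}\bigr)^{\sym_{b-a+1}}$ for the $\sym_{b-a+1}$-invariant inner product making the Young orthonormal basis orthonormal. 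Proving the proposition thus reduces to showing $\pi(Y_{\lambda,\nu,\kappa})\neq 0$.

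Since $\hs{\lambda}\preceq\kappa$, the skew diagram $\lambda/\kappa$ is a horizontal strip, so $\bigl(S^{\lambda/\kappa}\bigr)^{\sym_{b-a+1}}$ is one-dimensional, generated by $I_{\lambda/\kappa}:=\sum_{T\in\tab(\lambda/\kappa)}\beta^{\lambda/\kappa}_T\Phi_T$, as given by Proposition~\ref{prop:invt_skew}. Hence $\pi(Y_{\lambda,\nu,\kappa})\neq 0$ if and only if
\[
\bigl\langle Y_{\lambda,\nu,\kappa},\,I_{\lambda/\kappa}\bigr\rangle \;=\; \sum_{T\in\tab(\lambda/\kappa;\,\nu/\kappa)}\beta^{\lambda/\nu}_{T|_{\lambda/\nu}}\,\beta^{\lambda/\kappa}_T
\]
is nonzero, using orthonormality of $\{\Phi_T\}$ together with the fact that only tableaux $T\in\tab(\lambda/\kappa;\nu/\kappa)$ contribute to $Y_{\lambda,\nu,\kappa}$.

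To conclude, the positive-square-root convention implicit in the recursion $\beta_{T''}/\beta_{T'}=\sqrt{(r_i-1)/(r_i+1)}$ of Proposition~\ref{prop:invt_skew} yields $\beta>0$ for every standard tableau, since $(r_i-1)/(r_i+1)>0$ whenever $r_i\notin\{-1,0,1\}$. The displayed sum is therefore a sum of strictly positive terms, and it is non-empty (it contains the contribution indexed by $\tspec$ of Notation~\ref{nota:skew}), so it is strictly positive and in particular nonzero. The main delicate point is precisely this uniform positivity of the two families of coefficients $\beta^{\lambda/\nu}$ and $\beta^{\lambda/\kappa}$: without it, cancellation between terms could a priori destroy non-vanishing, and it is the already-established consistency of the recursion in Proposition~\ref{prop:invt_skew} that makes this short argument go through.
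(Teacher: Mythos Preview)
Your proof is correct and takes a genuinely different, and considerably shorter, route than the paper's.

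The paper does not argue via the orthogonal projection. Instead it isolates the coefficient $\theta(\lambda,\nu,\kappa)$ of $\Phi_{\tspec}$ in $\sum_i (1,i)Y_{\lambda,\nu,\kappa}$ (Notation~\ref{nota:coeff}) and proves directly that this particular coefficient is positive (Proposition~\ref{prop:coeff}). That requires a fairly intricate computation: the sum is rewritten via the cycle elements $\rho_j$ (Lemma~\ref{lem:Y_rho}), reduced to a sum over an explicit subset $\langle\tspec\rangle\subset\tab(\lambda/\kappa;\nu/\kappa)$ (Definition~\ref{defn:Ybar}, Lemma~\ref{lem:reduce_Ybar}), each contribution is computed in closed form (Proposition~\ref{prop:calculate_coeff}), and positivity is then established by an induction on $|\lambda/\nu|$ passing through the auxiliary triple $(\lambda,\nu^+,\kappa^+)$ (Proposition~\ref{prop:inductive_theta}). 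The difficulty the paper confronts is that $r_1^\epsilon(T)$ can be negative when $\tspec\neq T\rev$, so individual summands in $\theta$ need not be positive; the inductive factorization $\theta(\lambda,\nu,\kappa)=\bigl(1+\tfrac{1}{r_1(\tspec)}\bigr)\theta(\lambda,\nu^+,\kappa^+)$ is what rescues positivity.

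Your argument sidesteps this entirely by pairing against the \emph{full} invariant $I_{\lambda/\kappa}$ rather than extracting a single basis coefficient. The point you exploit is that, in the orthonormal basis, the coefficients $\beta^{\lambda/\kappa}_T$ and $\beta^{\lambda/\nu}_{T|_{\lambda/\nu}}$ are all strictly positive (this is exactly what Proposition~\ref{prop:invt_skew} records: the recursion ratio $\sqrt{(r_i-1)/(r_i+1)}$ is a positive real number since $r_i\notin\{-1,0,1\}$, and one starts from $\beta_{T\rev}=1$). Hence the inner product $\langle Y_{\lambda,\nu,\kappa},I_{\lambda/\kappa}\rangle$ is manifestly a nonempty sum of positive terms. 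What the paper's longer argument buys is an explicit closed-form for $\theta(\lambda,\nu,\kappa)$ itself; your argument gives only nonvanishing, which is all that Proposition~\ref{prop:reduction_Y} asserts. The author in fact remarks in the introduction that a better proof should exist; this is one.
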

 
\begin{proof}[Proof of Theorem \ref{thm:coker} assuming Proposition \ref{prop:reduction_Y}]

It suffices to show that Proposition \ref{prop:reduction_Y} implies  Proposition \ref{prop:reduction_X}.
Consider
$$
\sum _{i \in \mathbf{b} \backslash \mathbf{a-1}} 
(a, i)  X_{\lambda, \nu}
= 
\sum _{T \in \tab(\lambda; \nu)}
\Big(
\sum _{i \in \mathbf{b} \backslash \mathbf{a-1}} 
\beta_{T|_{\lambda/\nu}} (a,i)w_T 
\Big)
\otimes w_{T|_\nu}.
$$

The hypothesis on $\kappa$ ensures that there exists a tableau $\tilde{T}  \in \tab(\nu)$ such that the box of $\nu/\kappa$ is labelled by $a$.  Now we may restrict to considering  the 
$T \in \tab(\lambda; \nu)$ such that $T|_\nu = \tilde{T}$; such a tableau is determined by  the standard skew tableau $T|_{\lambda/\nu}$. 

To prove Proposition \ref{prop:reduction_X}, it suffices to show that the corresponding term 
 $$
\sum_{\substack{T \in \tab(\lambda;\nu) \\ T|_\nu = \tilde{T}}}
 \sum _{i \in \mathbf{b} \backslash \mathbf{a-1}} 
\beta_{T|_{\lambda/\nu}} (a,i)w_T
=
\sum _{i \in \mathbf{b} \backslash \mathbf{a-1}}
(a,i)
\Big(
\sum_{\substack{T \in \tab(\lambda;\nu) \\ T|_\nu = \tilde{T}}}
\beta_{T|_{\lambda/\nu}}w_T
\Big)
 $$
is non-zero. 

The summation over $T \in \tab(\lambda;\nu)$ such that $T|_\nu = \tilde{T}$ is equivalent to summing over $T' \in \tab (\lambda/\kappa; \nu /\kappa)$ and one has the correspondence $w_T \leftrightarrow \Phi_{T'}$, compatibly with the respective actions of the symmetric groups. Thus,  after reindexing and by definition of $Y_{\lambda,\nu,\kappa}$, Proposition \ref{prop:reduction_Y} gives that the right hand side is non-zero, as required.
\end{proof} 
 
\begin{rem}
\ 
\begin{enumerate}
\item 
A straightforward calculation using Lemma \ref{lem:Y_invariance} shows that 
 \[
 \sum_{i \in \mathbf{b-a +1}} (1, i) Y_{\lambda,\nu, \kappa}
 \]
 lies in $(S^{\lambda/\kappa})^{\sym_{b-a+1}}$, i.e., is fully invariant.
 Since $(S^{\lambda/\kappa})^{\sym_{b-a+1}} = \kring$, establishing non-triviality of this element is equivalent to comparing it with the generator given in Proposition \ref{prop:invt_skew}. 
\item
As in Proposition \ref{prop:permutation}, $S^{\lambda/\kappa}$ is a permutation representation. The difficulty in proving Proposition \ref{prop:reduction_Y} is that the element $Y_{\lambda, \nu , \kappa}$ is not defined in terms of the permutation basis. (Cf. Remark \ref{rem:permutation_versus_Phi}.)
\end{enumerate}
\end{rem}

 \subsection{A further reduction}
Recall the skew tableau $\tspec \in \tab (\lambda/\kappa; \nu/\kappa) \subset \tab (\lambda/\kappa)$ introduced in Notation \ref{nota:skew}. We work with the Young orthonormal basis $\{ \Phi_T | T \in \tab (\lambda/\kappa)$ for $S^{\lambda/\kappa}$.

\begin{nota}
\label{nota:coeff}
Let $\coeff (\lambda, \nu, \kappa) \in \mathbb{R}$ denote the coefficient of $\Phi_\tspec$ in $\sum_{i \in \mathbf{b-a +1}} (1, i) Y_{\lambda,\nu, \kappa}$.
\end{nota}

  Proposition \ref{prop:reduction_Y} clearly follows from:
  
 \begin{prop}
 \label{prop:coeff} 
 Under Hypothesis \ref{hyp:kappa_nu_lambda},
  $
  \coeff  (\lambda, \nu, \kappa)> 0.
  $
 \end{prop}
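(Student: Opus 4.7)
Set $n := b-a+1$ and $Y := Y_{\lambda,\nu,\kappa}$. The plan is to recognize $\sum_{i=1}^n (1,i)\, Y$ as lying in the one-dimensional $\sym_n$-invariant subspace of $S^{\lambda/\kappa}$, whose explicit generator is already known by Proposition \ref{prop:invt_skew}, and then to pin down the scalar of proportionality by an inner-product argument exploiting orthogonality of the Young orthonormal basis.

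First I would establish the invariance. By Lemma \ref{lem:Y_invariance}, $Y$ is fixed by the subgroup $\sym_{n-1}$ acting on $\{2,\ldots,n\}$, and $\{(1,i)\}_{1\le i\le n}$ is a set of left coset representatives for this subgroup in $\sym_n$; hence $(n-1)! \sum_i (1,i)\,Y = \sum_{g\in\sym_n} g\,Y$ lies in $(S^{\lambda/\kappa})^{\sym_n}$. Hypothesis \ref{hyp:kappa_nu_lambda} gives $\hs{\lambda}\preceq \kappa$, so $\lambda/\kappa$ is a horizontal strip and this invariant space is one-dimensional, generated by $\Psi := \sum_{T\in\tab(\lambda/\kappa)} \beta_T \Phi_T$. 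Writing $\sum_i (1,i)\,Y = c\,\Psi$, one has $\coeff(\lambda,\nu,\kappa) = c\,\beta_\tspec$, so the claim reduces to $c>0$.

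Next I would evaluate $c$ using the inner product $(-,-)$ on $S^{\lambda/\kappa}$ for which $\{\Phi_T\}$ is orthonormal. Since each Coxeter generator acts on its non-trivial $2\times 2$ block by an orthogonal matrix (immediate from the explicit formula), the whole group $\sym_n$ acts orthogonally and every transposition is self-adjoint. Combined with $\sym_n$-invariance of $\Psi$, this yields
\[
c\,\|\Psi\|^2 \;=\; \bigl(\textstyle\sum_i (1,i)\,Y,\,\Psi\bigr) \;=\; \bigl(Y,\,\textstyle\sum_i (1,i)\,\Psi\bigr) \;=\; n\,(Y,\Psi),
\]
and expanding in the orthonormal basis,
\[
(Y,\Psi) \;=\; \sum_{T \in \tab(\lambda/\kappa;\,\nu/\kappa)} \beta_{T|_{\lambda/\nu}}\cdot \beta_T.
\]

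It remains to observe that every $\beta$-coefficient in the above sums is strictly positive, which follows from the recursion in Proposition \ref{prop:invt_skew}: the ratio $\beta_{T''}/\beta_{T'} = \sqrt{(r_i-1)/(r_i+1)}$ is real and strictly positive whenever $|r_i|>1$ (both factors share the same sign), and the base case $\beta_{T\rev}=1$ then propagates this positivity along any chain of Coxeter moves connecting two standard tableaux. Applying this to $\tab(\lambda/\kappa)$ for $\Psi$ and to $\tab(\lambda/\nu)$ for the coefficients in $Y$ gives $(Y,\Psi)>0$, whence $c>0$ and $\coeff(\lambda,\nu,\kappa) = c\beta_\tspec>0$. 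The key conceptual step is recognizing that $\sum_i (1,i)\,Y$ is automatically $\sym_n$-invariant, which replaces an explicit computation involving many cancellations with a one-line inner-product manipulation; no substantial obstacle is anticipated beyond a careful bookkeeping of signs in the $\beta$-recursion.
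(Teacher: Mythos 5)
Your proof is correct, and it takes a genuinely different and cleaner route than the paper's. You exploit that $\sum_i (1,i)\,Y_{\lambda,\nu,\kappa}$ is automatically a $\sym_{b-a+1}$-invariant (since $\{(1,i)\}$ are left coset representatives for the stabilizer of $1$ and $Y$ is fixed by that stabilizer by Lemma~\ref{lem:Y_invariance}), hence a scalar multiple $c$ of the explicit invariant $\Psi$ of Proposition~\ref{prop:invt_skew}, and then you compute the sign of $c$ by a one-line inner-product manipulation using that the Young orthonormal basis really is orthonormal for a $\sym_n$-invariant inner product (so transpositions are self-adjoint). Strict positivity of all coefficients $\beta_T$, both in $Y$ (from the $\lambda/\nu$ system) and in $\Psi$ (from the $\lambda/\kappa$ system), then gives $(Y,\Psi)>0$, whence $c>0$ and $\coeff(\lambda,\nu,\kappa)=c\,\beta_\tspec>0$. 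The paper instead proceeds by a brute-force computation: it restricts to the admissible subset $\langle\tspec\rangle\subset\tab(\lambda/\kappa;\nu/\kappa)$, evaluates each contribution $\coeff(\lambda,\nu,\kappa)(T)$ explicitly via Proposition~\ref{prop:calculate_coeff}, handles the case $\tspec = T\rev$ directly (Proposition~\ref{prop:easy}), and closes the remaining cases by an intricate box-removal induction (Proposition~\ref{prop:inductive_theta}) using the additivity of axial distance. Your argument avoids all of this and in fact realizes exactly what the remark following Proposition~\ref{prop:reduction_Y} anticipates, namely comparing the invariant $\sum_i(1,i)Y$ directly against the generator of Proposition~\ref{prop:invt_skew}; the cost of the paper's approach buys an explicit product formula for the coefficient, which your argument does not recover, but for the purpose of establishing positivity your route is decidedly preferable.
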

 
The proof of this result (and hence of Theorem \ref{thm:coker}) will be given in the next section. 

\begin{rem}
The following example shows that $\coeff (\lambda,\nu, \kappa)$ can be an arbitrarily small positive real number. 

Take 
$\lambda$ to be the partition $(n,n-1)$ and $\kappa \preceq \nu \preceq \lambda$ to be given by $\kappa=(n-1)$ and $\nu=(n)$. 
 Hence the tableau $\tspec \in \tab (\lambda/\kappa; \nu/\kappa)$ is the following (in which the position of $1$ is given by the single box of $\nu/\kappa$):
 
\ytableausetup{smalltableaux}
\begin{ytableau}
\none & \none & \none &\none &\none & 1
\\
2 & 3 & . & . & \scriptstyle{n}
\end{ytableau}.

A straightforward calculation gives 
$ 
\coeff ((n,n-1),(n),(n-1)) = \frac{1}{n}.
$ 
\end{rem}

\subsection{Proof of Proposition \ref{prop:coeff}}
\label{subsect:proof}

 Throughout this section, Hypothesis \ref{hyp:kappa_nu_lambda} is in force.
  
 In order to exploit the explicit action of the symmetric group given by the action of the Coxeter generators, the following elementary result is used:
 
 \begin{lem}
 \label{lem:transpositions}
 For $n \in \nat$ and $1 \leq j <n$, in $\sym_n$ one has
 $
 (1,j+1) = s_j (1,j) s_j.
 $
 \end{lem}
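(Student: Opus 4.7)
The statement is the elementary identity $(1,j+1) = s_j(1,j)s_j$ in $\sym_n$, where $s_j = (j,j+1)$. My plan is to prove it by direct computation, using the standard fact that conjugation of a transposition by a permutation $\sigma$ yields the transposition of the images: $\sigma(k,\ell)\sigma^{-1} = (\sigma(k),\sigma(\ell))$.

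Since $s_j$ is a transposition, it is its own inverse, so $s_j(1,j)s_j = s_j(1,j)s_j^{-1}$. Applying the conjugation formula with $\sigma = s_j = (j,j+1)$ gives $s_j(1,j)s_j^{-1} = (s_j(1), s_j(j)) = (1, j+1)$, since $j \geq 1$ implies $s_j$ fixes $1$ (as $1 \neq j, j+1$ when $j \geq 2$; and if $j=1$ then $s_j(1) = 2 = j+1$ and $s_j(j) = s_j(1) = 2$, wait let me recheck).

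Actually I should be careful about the case $j=1$. If $j=1$, then $s_1 = (1,2)$ and $(1,j) = (1,1)$ is the identity, so $s_1 \cdot \mathrm{id} \cdot s_1 = \mathrm{id}$, but the statement would claim $(1,2) = \mathrm{id}$, which is false. So implicitly the statement requires $j \geq 2$, or interprets $(1,1)$ differently. Let me re-read: the hypothesis is $1 \leq j < n$, and the conclusion is $(1,j+1) = s_j(1,j)s_j$. For $j=1$, the identity asserts $(1,2) = s_1(1,1)s_1$, which fails under the standard convention. So either the intended range is $2 \leq j < n$ (implicitly, since the lemma will be applied only in that range), or the transposition $(1,1)$ is to be read as $s_1 = (1,2)$ itself in some convention. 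Given the way the lemma will be used (to iteratively build up $(1,i)$ from adjacent transpositions), the intended range is $j \geq 2$, and this is a standard identity I can simply verify by evaluating both sides on the elements $1$, $j$, $j+1$, and noting both fix every other letter.

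So my plan is simply: assume $j \geq 2$ (the case $j=1$ is vacuous or excluded from the intended application), compute the conjugate using the standard formula $\sigma(a,b)\sigma^{-1} = (\sigma(a),\sigma(b))$ with $\sigma = s_j$ and $(a,b) = (1,j)$, and observe that $s_j(1) = 1$ (since $1 \notin \{j, j+1\}$) while $s_j(j) = j+1$, yielding $(1,j+1)$ as claimed. The main (and only) step is this one-line application of the conjugation formula; there is no substantive obstacle.
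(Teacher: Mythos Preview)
Your proposal is correct; the paper gives no proof, simply labelling the result as an ``elementary result'', and the conjugation formula $\sigma(a,b)\sigma^{-1}=(\sigma(a),\sigma(b))$ is exactly the intended justification. Your observation about the boundary case $j=1$ is valid---the statement is literally false there if $(1,1)$ denotes the identity---but harmless: in the sole application (the proof of Lemma~\ref{lem:Y_rho}) the identity is only invoked for $j\geq 2$.
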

 
 \begin{nota}
 Set $\rho_2= \mathrm{id}$ and, recursively for $j>2$, 
 $ 
 \rho_j = s_{j-1} \rho_{j-1},
 $  
 so that $\rho_j$ is the cycle $(j,j-1,  \ldots, 2)$ given as an explicit product of Coxeter generators.
 \end{nota}
 
 \begin{lem}
 \label{lem:Y_rho}
 \[
 \sum_{i \in \mathbf{b-a +1}} (1, i) Y_{\lambda,\nu, \kappa}
= 
Y_{\lambda, \nu,\kappa}
+ 
\sum_{j=2}^{b-a+1}
\rho_j (1 2) Y_{\lambda,\nu, \kappa}. 
 \]
 \end{lem}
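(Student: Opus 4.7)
The plan is to reduce the left-hand side term by term, peeling off the $i=1$ contribution and then rewriting each transposition $(1,i)$ with $i\geq 2$ as $\rho_i(1\,2)$ acting on $Y_{\lambda,\nu,\kappa}$. The $i=1$ contribution is immediate, since $(1,1)$ is the identity and produces the isolated $Y_{\lambda,\nu,\kappa}$ term on the right-hand side.

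For $i=j\geq 2$, the key is an iterated application of Lemma \ref{lem:transpositions}. Starting from $(1,2)$ and using $(1,k+1)=s_k(1,k)s_k$ recursively, one obtains the conjugation formula
\[
(1,j) \;=\; s_{j-1}s_{j-2}\cdots s_2\,(1,2)\,s_2\cdots s_{j-2}s_{j-1} \;=\; \rho_j\,(1\,2)\,\rho_j^{-1},
\]
with $\rho_j^{-1}=s_2 s_3\cdots s_{j-1}$ by the recursive definition of $\rho_j$. Substituting into the sum, each term $(1,j)Y_{\lambda,\nu,\kappa}$ becomes $\rho_j(1\,2)\rho_j^{-1}Y_{\lambda,\nu,\kappa}$.

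The final step is to discard the trailing factor $\rho_j^{-1}$. Since $\rho_j^{-1}$ is a product of Coxeter generators $s_k$ with $k\in\{2,\ldots,j-1\}$, it lies in the subgroup of $\sym_{b-a+1}$ that fixes $1$ and permutes $\{2,\ldots,b-a+1\}$. By Lemma \ref{lem:Y_invariance}, this is precisely the subgroup under which $Y_{\lambda,\nu,\kappa}$ is invariant, so $\rho_j^{-1}Y_{\lambda,\nu,\kappa}=Y_{\lambda,\nu,\kappa}$ and hence $(1,j)Y_{\lambda,\nu,\kappa}=\rho_j(1\,2)Y_{\lambda,\nu,\kappa}$. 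Summing over $j=2,\ldots,b-a+1$ and adding the $i=1$ contribution yields the stated identity.

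There is no real obstacle here: the content is the conjugation identity from Lemma \ref{lem:transpositions} together with the invariance of $Y_{\lambda,\nu,\kappa}$ from Lemma \ref{lem:Y_invariance}. The only point requiring a quick check is that every generator appearing in the expression of $\rho_j^{-1}$ has index at least $2$, so that it belongs to the invariance subgroup, which is immediate from the definition of $\rho_j$.
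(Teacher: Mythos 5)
Your proof is correct and follows exactly the same route as the paper: deriving $(1,j)=\rho_j(1\,2)\rho_j^{-1}$ from Lemma~\ref{lem:transpositions}, then using that $\rho_j^{-1}$ lies in $\aut(\{2,\ldots,b-a+1\})$ so acts trivially on $Y_{\lambda,\nu,\kappa}$ by Lemma~\ref{lem:Y_invariance}. Nothing to add.
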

 
\begin{proof}
Lemma \ref{lem:transpositions} implies that, for $j \geq 2$, $(1,j) = \rho_j (1,2) \rho_j^{-1}$. Now $\rho_j^{-1}$ lies in the subgroup $\aut (\{ 2, \ldots , b-a+1 \}) \cong \sym_{b-a} \subset \sym_{b-a+1}\cong \aut (\mathbf{b-a})$, hence acts trivially upon $Y_{\lambda,\nu,\kappa}$, by Lemma \ref{lem:Y_invariance}. The result follows.
\end{proof}

Recall that, for $T \in \tab (\lambda/\kappa)$ and a Coxeter generator $s_i$, one has the two, mutually exclusive possibilities:
 \begin{enumerate}
 \item 
 $i$, $i+1$ are in the same row of $T$ and $s_i \Phi_T = \Phi_T$; 
 \item 
 $s_i T \in \tab (\lambda/\kappa)$ (i.e., $s_i T$ is standard) and $s_i \Phi_T$ is a linear combination of $\Phi_T$ and $\Phi_{s_i T}$.
 \end{enumerate}
This places a strong restriction on the tableaux in $\tab(\lambda/\kappa; \nu/\kappa)$ for which the corresponding term in $Y_{\lambda, \nu,\kappa}$ can contribute to $\coeff (\lambda, \nu,\kappa)$, motivating the following definition:

\begin{defn}
\label{defn:Ybar}
\ 
\begin{enumerate}
\item 
If $a=b$, set $\langle \tspec \rangle =\{ \tspec  \}$.   If $b>a$, let $\langle \tspec \rangle \subset \tab (\lambda/\kappa; \nu/ \kappa)$ be the set of standard tableaux of the following form:
\[
s_2^{\epsilon_2}
s_3^{\epsilon _3}
\ldots 
s_{b-a}^{\epsilon_{b-a}}
\tspec
\]
where $\epsilon_i \in \{0,1\}$ such that $\epsilon_i=0$ if $i$, $i+1$ lie in the same row of the standard tableau $s_{i+1}^{\epsilon _{i+1}}
\ldots 
s_{b-a}^{\epsilon_{b-a}}
\tspec$. 

By convention, $\epsilon_1:= 0$; in particular, if $b-a=1$, then $\langle \tspec \rangle =\{ \tspec  \}$.
\item 
Set $$
\overline{Y}_{\lambda,\nu, \kappa}:= 
\sum_{T \in \langle \tspec \rangle }
\beta_{T|_{\lambda/\nu}} \Phi_T 
\in S^{\lambda/\kappa}.
$$
\end{enumerate}
\end{defn}

\begin{rem}
The `admissibility' condition imposed on the $\epsilon_i$ in the definition of $\langle \tspec \rangle$ ensures that, given a tableau $s_2^{\epsilon_2}
s_3^{\epsilon _3}
\ldots 
s_{b-a}^{\epsilon_{b-a}}
\tspec$ in $\langle \tspec \rangle$, for all $1 \leq k \leq b-a$, the tableau:
\[
s_k^{\epsilon_k}
\ldots 
s_{b-a}^{\epsilon_{b-a}}
\tspec
\]
is standard.
\end{rem}

\begin{lem}
\label{lem:reduce_Ybar}
$\coeff  (\lambda, \nu , \kappa)$ is equal to the coefficient of $\Phi_\tspec$ in
\[
\overline{Y}_{\lambda, \nu,\kappa}
+ 
\sum_{j=2}^{b-a+1}
\rho_j (1 2) \overline{Y}_{\lambda,\nu, \kappa}. 
 \]
 \end{lem}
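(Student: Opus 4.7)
By Lemma \ref{lem:Y_rho} and linearity, it suffices to show that every summand of $Y_{\lambda,\nu,\kappa}$ indexed by $T \in \tab(\lambda/\kappa;\nu/\kappa) \setminus \langle \tspec \rangle$ contributes zero to the coefficient of $\Phi_\tspec$ in $Y_{\lambda,\nu,\kappa} + \sum_{j=2}^{b-a+1} \rho_j(1,2) Y_{\lambda,\nu,\kappa}$. The identity-term contribution vanishes trivially since $T \neq \tspec$, so the task reduces to proving that the coefficient of $\Phi_\tspec$ in $\rho_j(1,2) \Phi_T$ vanishes for every $j \in \{2, \ldots, b-a+1\}$.

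The key manoeuvre is a passage to the ``adjoint'' problem. Because the Young orthonormal basis gives orthogonal representation matrices and each Coxeter generator is an involution, the action of $\sym_{b-a+1}$ is self-adjoint with respect to the inner product for which $\{\Phi_T\}$ is orthonormal. Thus
\[
\langle \rho_j(1,2)\Phi_T, \Phi_\tspec\rangle = \langle \Phi_T, s_1 s_2 \cdots s_{j-1}\Phi_\tspec\rangle,
\]
and it suffices to show that $\Phi_T$ does not occur in the Young-basis expansion of $s_1 s_2 \cdots s_{j-1}\Phi_\tspec$. I would unroll this product right-to-left, invoking the Young orthonormal form: at each step, applying $s_i$ to a Young basis vector $\Phi_{T'}$ either fixes it up to a sign (when $i, i+1$ lie in the same row or column of $T'$) or yields a linear combination of $\Phi_{T'}$ and $\Phi_{s_i T'}$ (and in the latter case $s_i T'$ is automatically standard). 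Hence the vectors occurring in the expansion are exactly $\Phi_{s_1^{\delta_1} s_2^{\delta_2} \cdots s_{j-1}^{\delta_{j-1}}\tspec}$ for admissible $(\delta_1, \ldots, \delta_{j-1}) \in \{0,1\}^{j-1}$, where $\delta_i = 1$ is permitted only in the ``mixing'' case at step $i$.

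Two observations pin down which tableaux $T \in \tab(\lambda/\kappa;\nu/\kappa)$ can then occur. First, since $\tspec$ places entry $1$ in the unique box of $\nu/\kappa$ and each $s_i$ with $i \geq 2$ permutes only entries $\geq 2$, the tableau $s_1^{\delta_1} \cdots s_{j-1}^{\delta_{j-1}}\tspec$ still has $1$ in $\nu/\kappa$ precisely when $\delta_1 = 0$; any contributing $T$ in $\tab(\lambda/\kappa;\nu/\kappa)$ therefore satisfies $\delta_1 = 0$. Second, by Hypothesis \ref{hyp:kappa_nu_lambda} the skew shape $\lambda/\nu$ is a horizontal strip, so all entries $\geq 2$ of any intermediate tableau $s_{i+1}^{\delta_{i+1}} \cdots s_{j-1}^{\delta_{j-1}}\tspec$ lie inside $\lambda/\nu$ and hence never share a column. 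For $i \geq 2$ the mixing-versus-fixing dichotomy thus reduces to ``$\delta_i = 0$ if $i, i+1$ lie in the same row'', which is exactly the admissibility condition of Definition \ref{defn:Ybar}. Setting $\epsilon_i := \delta_i$ for $2 \leq i < j$ and $\epsilon_i := 0$ for $j \leq i \leq b-a$ yields an admissible sequence witnessing $T \in \langle \tspec \rangle$; the contrapositive is the desired vanishing.

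The conceptual heart of the argument, which I view as the principal obstacle, is recognising that the horizontal-strip property of $\lambda/\nu$ is precisely what makes ``admissibility in the Young-basis expansion'' coincide with the admissibility of Definition \ref{defn:Ybar}: without this property, same-column configurations among entries $\geq 2$ would introduce extra sign factors and enlarge the set of contributing tableaux. Once this matching is in place, no further coefficient arithmetic (signs, $1/r_i$ factors, etc.) is needed for the qualitative claim, although such explicit computations will be indispensable in the subsequent analysis of $\coeff(\lambda,\nu,\kappa)$.
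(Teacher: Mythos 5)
Your proof is correct, and it rigorously fills a gap that the paper leaves implicit. The paper's own proof of this lemma is a single sentence — ``By construction, $\overline{Y}_{\lambda,\nu,\kappa}$ is the sum of the terms appearing in $Y_{\lambda,\nu,\kappa}$ that can contribute non-trivially'' — relying on the preceding informal discussion of the mixing/fixing dichotomy to motivate Definition~\ref{defn:Ybar}. You make this precise, and your key manoeuvre — using orthogonality of the Young basis to replace the coefficient of $\Phi_\tspec$ in $\rho_j(1,2)\Phi_T$ by the coefficient of $\Phi_T$ in $s_1 s_2 \cdots s_{j-1}\Phi_\tspec$ — is a genuine simplification not made explicit in the paper. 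It converts the problem from a sum over all $T$ into a single unrolling of Coxeter generators starting from $\tspec$, which is structurally parallel to the recursion defining $\langle\tspec\rangle$. The two subsequent observations (that $\delta_1$ must vanish for $T$ to remain in $\tab(\lambda/\kappa;\nu/\kappa)$, and that the horizontal-strip property forbids same-column configurations among entries $\geq 2$, so that the admissibility criterion reduces to a same-row check) are exactly what is needed to match the set of contributing tableaux with $\langle\tspec\rangle$. One small clarification worth noting: the same-column exclusion actually holds for \emph{all} entries, since $\lambda/\kappa$ itself is a horizontal strip under Hypothesis~\ref{hyp:kappa_nu_lambda}; you invoke only the horizontal-strip property of $\lambda/\nu$, which suffices for the entries $\geq 2$ you consider, but the stronger fact is available and would slightly streamline the discussion.
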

 
 \begin{proof}
By Lemma \ref{lem:Y_rho},  $\coeff  (\lambda, \nu , \kappa)$ is equal to the coefficient of $\Phi_\tspec$ in 
 $Y_{\lambda, \nu,\kappa}
+ 
\sum_{j=2}^{b-a+1}
\rho_j (1 2) Y_{\lambda,\nu, \kappa}$. 
 
By construction, $\overline{Y}_{\lambda, \nu,\kappa}$ is the sum of the terms appearing in $Y_{\lambda,\nu,\kappa}$ that   can contribute non-trivially to $\coeff  (\lambda, \nu , \kappa)$. 
 \end{proof}
 
The cases of small $b-a$ illustrate the basic behaviour. For $b-a \in \{0,1\}$, one has:

\begin{exam}
\label{exam:b-a<2}
\ 
\begin{enumerate}
\item 
If $b=a$, then $\overline{Y}_{\lambda, \nu,\kappa}=\Phi_\tspec$, where $\tspec$ has a single box, labelled by $1$. Clearly $\coeff (\lambda, \nu, \kappa)=1$ in this case.
\item 
If $b-a=1$, then again one has $\overline{Y}_{\lambda, \nu,\kappa}=\Phi_\tspec$, where now $\tspec$ has two boxes, labelled $1$ and $2$. In this case, 
$ \coeff (\lambda, \nu, \kappa)$ is the coefficient of $\Phi_\tspec $ in $\Phi_\tspec + s_1 \Phi_\tspec$. This is equal to $1 + \frac{1}{r_1 (\tspec)}$. Since $r_1 (\tspec) \in \zed \backslash \{0, 1\}$, this is well-defined and positive.
\end{enumerate}
\end{exam}

The next case exhibits crucial new ingredients:

\begin{exam}
\label{exam:b-a=2}
Suppose that $b-a =2$, thus $\tspec$ has three boxes, labelled $1$, $2$ and $3$,  and $r_2 (\tspec) \geq 1$, with equality if and only if $2$ and $3$ occur in the same row. In this case:
\[
\overline{Y}_{\lambda, \nu,\kappa}=
\Phi_\tspec
+ \beta \Phi_{s_2 \tspec} 
\]
where $\beta = \sqrt \frac{r-1}{r+1}$ for $r= r_2 (\tspec)$. (Here, if $s_2 \tspec$ is not standard, $\Phi_{s_2 \tspec}$ should be understood to be zero; one also has $\beta=0$.)

By definition, $\coeff (\lambda, \nu, \kappa)$ is the coefficient of $\Phi_\tspec$ in 
\[
\overline{Y}_{\lambda, \nu,\kappa}
+
s_1 \overline{Y}_{\lambda, \nu,\kappa}
+
s_2 s_1\overline{Y}_{\lambda, \nu,\kappa}.
\]
Only $\Phi_\tspec$ contributes to the first two expressions, giving $1 + \frac{1}{r_1 (\tspec)}$, as in the previous example. 

For the final term:
\begin{enumerate}
\item 
$s_2 s_1 \Phi_\tspec$ contributes $\frac{1}{r_2(\tspec) r_1 (\tspec)}$;
\item 
suppose that $\beta\neq 0$, then 
$s_2 s_1 (\beta \Phi_{s_2 \tspec})$ contributes 
\[
 \frac{1} {r_1 (s_2 \tspec)}\beta  \sqrt {1 -  \frac{1}{r_2 (s_2 \tspec)^2} }
=
 \frac{1} {r_1 (s_2 \tspec)}
 \sqrt \frac{r_2 (\tspec) -1}{r_2 (\tspec) +1}\sqrt {1 -  \frac{1}{r_2 (s_2 \tspec)^2} }
=
\frac{1} {r_1 (s_2 \tspec)} \Big(\frac{ r_2 (\tspec)  -1 }{r_2 (\tspec) }\Big)
\]
by using the action of the Coxeter generators on the Young orthonormal basis, the fact that $r_2 (s_2 \tspec) = - r_2 (\tspec)$ and that $r= r_2 (\tspec)\geq 1$.
\end{enumerate}
Summing these two contributions gives:
\[
\frac{1}{r_1 (s_2 \tspec) r_2 (\tspec)}
\Big(
\frac{r_1 (s_2\tspec)}{r_1 (\tspec)} + r_2 (\tspec) -1 
\Big) 
= \frac{1}{r_1 (s_2 \tspec)}\Big( 1  + \frac{1} {r_1 (\tspec)} \Big), 
\]
where the equality is obtained by using the identity (see Lemma \ref{lem:axial_additivity}):
\[
a(3,1) (s_2 \tspec) = r_2 (s_2 \tspec) + r_1 (s_2 \tspec), 
\]
which, since $a(3,1)(s_2 \tspec) = r_1 (\tspec)$, gives 
$
r_1 (s_2 \tspec) = r_1 (\tspec) + r_2 (\tspec).
$

Putting these facts together, one gets:
\[
\coeff (\lambda, \nu, \kappa) = 
\left\{
\begin{array}{ll}
\Big( 1  + \frac{1} {r_1 (\tspec)} \Big) \Big(1 + \frac{1}{r_1 (s_2 \tspec)} \Big)
&
\mbox{$s_2 \tspec$ standard}
\\
 1  + 2 \frac{1} {r_1 (\tspec)}
& \mbox{otherwise.}
\end{array}
\right.
\] 

A straightforward verification (using that $r_1 (\tspec) \neq 1$) shows that these expressions are equal if one uses the identity $r_1 (s_2 \tspec) = r_1 (\tspec) + r_2 (\tspec)$ to {\em define} $r_1 (s_2 \tspec)$ when $s_2 \tspec $ is not standard, since $r_2(\tspec)=1$ in this case. Equivalently, one can replace $r_1 (s_2 \tspec) $ by $a(3,1) (\tspec)$, giving the unified expression:
\[
\coeff (\lambda, \nu, \kappa) = 
\Big( 1  + \frac{1} {r_1 (\tspec)} \Big) \Big(1 + \frac{1}{a(3,1)(\tspec)} \Big).
\]

The factor $(1 + \frac{1}{a(3,1)(\tspec)})$ should be interpreted as being the value of $\coeff$ obtained when replacing $\lambda/\kappa$ by the skew-diagram given by omitting the box labelled by $2$ and reindexing. This fits into a general inductive scheme, as below.
\end{exam} 

We next give an explicit expression for $\coeff (\lambda, \nu, \kappa)$ (see Proposition \ref{prop:calculate_coeff}), using the ingredients used in Example \ref{exam:b-a=2}.

\begin{nota}
\label{nota:coeff(T)}
For $T$ in $\langle \tspec \rangle$, let $\coeff  (\lambda,\nu,\kappa)(T)$ be the coefficient of $\Phi_\tspec$ in 
$$
\beta_{T|_{\lambda/\nu}} \big( 
\Phi_T + 
\sum_{j=2}^{b-a+1}
\rho_j (1 2) \Phi_T
\big).
$$
\end{nota}

Lemma \ref{lem:reduce_Ybar} implies:

\begin{lem}
\label{lem:sum_T}
$
\coeff  (\lambda,\nu,\kappa)=
\sum_{T \in \langle \tspec \rangle}
\coeff  (\lambda,\nu,\kappa)(T).
$
\end{lem}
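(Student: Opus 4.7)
The proof will be purely formal: the identity is nothing more than the linearity expansion of Lemma \ref{lem:reduce_Ybar} using the defining expression of $\overline{Y}_{\lambda,\nu,\kappa}$.

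The plan is as follows. Lemma \ref{lem:reduce_Ybar} identifies $\coeff(\lambda,\nu,\kappa)$ as the coefficient of $\Phi_\tspec$ in
\[
\overline{Y}_{\lambda,\nu,\kappa} + \sum_{j=2}^{b-a+1} \rho_j (1\, 2)\, \overline{Y}_{\lambda,\nu,\kappa}.
\]
By Definition \ref{defn:Ybar}, $\overline{Y}_{\lambda,\nu,\kappa} = \sum_{T \in \langle \tspec \rangle} \beta_{T|_{\lambda/\nu}} \Phi_T$. Both the action of $\kring\sym_{b-a+1}$ on $S^{\lambda/\kappa}$ and the operation of extracting the coefficient of the basis vector $\Phi_\tspec$ in the Young orthonormal basis are $\kring$-linear. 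Consequently, substituting the expansion of $\overline{Y}_{\lambda,\nu,\kappa}$ into the displayed formula and pulling the sum over $T \in \langle \tspec \rangle$ outside, each resulting summand is exactly the coefficient of $\Phi_\tspec$ in
\[
\beta_{T|_{\lambda/\nu}}\Bigl( \Phi_T + \sum_{j=2}^{b-a+1} \rho_j (1\,2) \Phi_T \Bigr),
\]
which is $\coeff(\lambda,\nu,\kappa)(T)$ by Notation \ref{nota:coeff(T)}.

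There is no genuine obstacle in this step: it is a formal bookkeeping identity that merely repackages Lemma \ref{lem:reduce_Ybar} by isolating the contribution of each admissible tableau $T \in \langle \tspec \rangle$. Its purpose is to set up the subsequent term-by-term analysis, in which each $\coeff(\lambda,\nu,\kappa)(T)$ is to be computed explicitly via the action of the Coxeter generators on the Young orthonormal basis and the outputs summed to establish Proposition \ref{prop:coeff}.
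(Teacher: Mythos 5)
Your proof is correct and matches the paper's implicit argument: the paper simply states ``Lemma~\ref{lem:reduce_Ybar} implies:'' before this lemma, and your formal expansion by linearity of the $\sym_{b-a+1}$-action and the coefficient-extraction functional is exactly what is meant.
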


To state Proposition \ref{prop:calculate_coeff}, we require the following notation:

\begin{nota}
Suppose that $b-a \geq 1$ and  $T\in \langle \tspec \rangle$ given by the sequence $(\epsilon_j)|_{2\leq j \leq  b-a}$.
\begin{enumerate}
\item 
for $1 \leq i \leq b-a$, denote by $\re_i(T)$  the axial distance from $i+1$ to $i$ in $s_{i+1}^{\epsilon _{i+1}}
\ldots 
s_{b-a}^{\epsilon_{b-a}}
\tspec$ (thus $\re_1 (T) = r_1 (T)$); 
\item 
denote by $J(T):= \inf \{ i \ | \  \epsilon_j =0 \ \forall j > i \}$.
\end{enumerate}
\end{nota}

We note the following important property of 
the axial distances $\re_i(T)$:

\begin{lem}
\label{lem:positivity}
For $T \in \langle \tspec \rangle$ given by the sequence $(\epsilon_j)|_{2\leq j \leq  b-a}$ and $2 \leq j \leq b-a$:
\[
\re_j(T) \geq 1 
\]
with equality if and only if $j$, $j+1$ lie in the same row of $s_{j+1}^{\epsilon _{j+1}}
\ldots 
s_{b-a}^{\epsilon_{b-a}}
\tspec$.

If  $\tspec = T\rev$ (i.e., the box labelled by $1$ is the leftmost box of the skew tableau $\tspec$), then $\re_1(T)\geq 1$.
\end{lem}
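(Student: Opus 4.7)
The main idea is to analyze $T^{(j)} := \sigma_j\tspec$ where $\sigma_j := s_{j+1}^{\epsilon_{j+1}}\cdots s_{b-a}^{\epsilon_{b-a}}$, viewed as a permutation of labels. Since every factor of $\sigma_j$ is $s_k$ with $k > j$, the permutation $\sigma_j$ fixes $\{1,\ldots,j\}$ pointwise; in particular, the label $j$ in $T^{(j)}$ occupies the same box as in $\tspec$. Writing $B_\ell$ for the box of label $\ell$ in $\tspec$ and $(u_\ell, v_\ell)$ for its coordinates, a direct right-to-left evaluation of $\sigma_j^{-1}$ on $j+1$ yields $\sigma_j^{-1}(j+1) = j+1+k$, where $k \geq 0$ is the length of the maximal initial run $\epsilon_{j+1} = \epsilon_{j+2} = \cdots = \epsilon_{j+k} = 1$. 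Hence label $j+1$ in $T^{(j)}$ sits at $B_{j+1+k}$, and
\[
\re_j(T) = (v_{j+1+k} - u_{j+1+k}) - (v_j - u_j).
\]

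The core input will be a content-monotonicity for $T\rev$ on the horizontal strip $\lambda/\nu$: for $\ell \geq 2$, I will show $v_{\ell+1} > v_\ell$ and $u_{\ell+1} \leq u_\ell$, giving $(v_{\ell+1} - u_{\ell+1}) - (v_\ell - u_\ell) \geq 1$. The column inequality is immediate from the left-to-right labelling. The row inequality rules out a ``jump down'': assuming $u_{\ell+1} > u_\ell$, first $v_\ell = \lambda_{u_\ell}$ (otherwise $(u_\ell, v_\ell+1) \in \lambda/\nu$ would be $B_{\ell+1}$, forcing $u_{\ell+1}=u_\ell$); the partition inequalities then give $v_{\ell+1} \leq \lambda_{u_{\ell+1}} \leq \lambda_{u_\ell} = v_\ell$, contradicting $v_{\ell+1} > v_\ell$. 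Summing the content-increments yields $\re_j(T) \geq k+1 \geq 1$.

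For the equality case, $\re_j(T) = 1$ forces $k = 0$ (so $\epsilon_{j+1} = 0$ and label $j+1$ in $T^{(j)}$ lies at $B_{j+1}$) together with $B_j, B_{j+1}$ adjacent in a single row, whence $j, j+1$ lie in the same row of $T^{(j)}$. Conversely, if $j, j+1$ lie in the same row of the standard tableau $T^{(j)}$, then they occupy adjacent columns, forcing $\re_j(T) = 1$. The remaining subtlety I will address is ruling out the case $k \geq 1$ with $B_j, B_{j+1+k}$ in the same row: exploiting that the boxes of any single row of $\lambda/\nu$ carry a block of consecutive labels in $T\rev$, this would force $B_{j+1}$ and $B_{j+2}$ to lie in the same row. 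But the analogous computation gives $\sigma_{j+1}^{-1}(j+2) = j+1+k$, so that in $T^{(j+1)}$, labels $j+1$ and $j+2$ occupy $B_{j+1}$ and $B_{j+1+k}$, hence lie in the same row of $T^{(j+1)}$, contradicting the admissibility condition forcing $\epsilon_{j+1} = 0$ in this situation.

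Finally, for the assertion $\re_1(T) \geq 1$ under the hypothesis $\tspec = T\rev$ on $\lambda/\kappa$, the box $B_1$ is then the leftmost box of the horizontal strip $\lambda/\kappa$, so the content-monotonicity extends to $B_1, B_2, \ldots$, and the argument above applies verbatim with $j=1$. The main obstacle I anticipate is the content-monotonicity step — specifically the ``no jump-down'' argument and the consecutive-labels-within-a-row property — which requires careful bookkeeping with the partition inequalities for $\nu$ (respectively $\kappa$) but is elementary once the right picture is in mind.
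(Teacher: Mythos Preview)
Your argument is correct, and it takes a genuinely different route from the paper's proof. The paper argues by induction on $b-a$: it invokes Lemma~\ref{lem:Trev} (the characterisation of $T\rev$ via positive axial distances) for the base case $\re_{b-a}(T)=r_{b-a}(\tspec)$, then for the inductive step removes the box with the largest label from $s_{b-a}^{\epsilon_{b-a}}\tspec$, reducing to the same statement for a smaller horizontal strip. By contrast, you compute $\re_j(T)$ directly as a content difference $c(B_{j+1+k})-c(B_j)$ on the fixed tableau $\tspec$, after tracking explicitly where $\sigma_j$ sends the label $j+1$, and then invoke the content-monotonicity of $T\rev$ (which you re-derive rather than cite from Lemma~\ref{lem:Trev}). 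Your approach yields the sharper bound $\re_j(T)\geq k+1$, which is more information than the paper extracts; the paper's approach is shorter and avoids the bookkeeping of the permutation calculation.

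One comment on economy: your ``remaining subtlety'' paragraph is not needed. In the converse direction, if $j,j+1$ lie in the same row of the standard tableau $T^{(j)}$ then they are adjacent, so $\re_j(T)=1$; combined with your inequality $\re_j(T)\geq k+1$ this already forces $k=0$, without appealing to the admissibility constraint. The admissibility argument you give is valid but superfluous.
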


\begin{proof}
The first statement is proved by increasing induction upon $b-a$ by using Lemma \ref{lem:Trev}.

The box labelled $1$ does not intervene, hence (up to relabelling) one may restrict to ${\lambda/\nu}$. Restricted to $\lambda /\nu$, by construction  one has $\tspec \equiv T\rev$. In particular, this gives $\re_{b-a} (T)= r_{b-a}(\tspec) \geq 1$ by Lemma \ref{lem:Trev}. The statement on the equality is clear. 

For the inductive step, consider $s_{b-a}^{\epsilon_{b-a}} \tspec$. Forgetting the box labelled by $b-a+1$ (this is the maximal label, after the relabelling), this gives a sub skew diagram $\lambda' /\kappa$ and the corresponding restricted tableau is $\tspec \in \tab(\lambda'/\kappa; \nu/\kappa)$. Hence the inductive hypothesis applies.

Finally, if $\tspec = T\rev$, it is clear that $\re_1 (T)\geq 1$ also. 
\end{proof}

\begin{prop}
\label{prop:calculate_coeff}
Suppose that $b-a \geq 1$. For $T \in \langle \tspec \rangle$ given by the sequence $(\epsilon_j)|_{2\leq j \leq  b-a}$,
\[
\coeff  (\lambda,\nu,\kappa)(T)
=
\delta_{T,\tspec}
+  
\sum_{k=J(T)}^{b-a} 
\prod_{1\leq j \leq  k}
\Big(\frac{1}{\re_j(T)}\Big)^{1-\epsilon_j}
\Big(\frac{\re_j(T)-1}{\re_j(T)}\Big)^{\epsilon_j},
\]
where $\delta_{T,\tspec}$ is $1$ if $T = \tspec$ and $0$ otherwise.
\end{prop}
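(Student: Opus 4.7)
The starting point is to expand $\coeff(\lambda,\nu,\kappa)(T)$ according to its definition (Notation \ref{nota:coeff(T)}). The leading summand $\beta_{T|_{\lambda/\nu}}\Phi_T$ contributes $\beta_{T|_{\lambda/\nu}}\delta_{T,\tspec}=\delta_{T,\tspec}$, since $\tspec|_{\lambda/\nu}=T\rev$ and $\beta_{T\rev}=1$ by the normalization of Proposition \ref{prop:invt_skew}. For each $j$ with $2 \leq j \leq b-a+1$, setting $k=j-1$ so that $\rho_j(12)=s_k\cdots s_2 s_1$, the task reduces to computing $[\Phi_\tspec]\, s_k\cdots s_2 s_1\Phi_T$ by expanding the action of the Coxeter generators on the Young orthonormal basis term by term.

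A preliminary observation drives the whole calculation: when $s_1$ acts on $\Phi_T$, the swap branch $\Phi_{s_1 T}$ places the label $1$ at the position occupied by $2$ in $T$, which is not the unique box of $\nu/\kappa$. Since $s_2,\ldots,s_k$ fix the label $1$, this branch cannot reach $\Phi_\tspec$, so
\[
[\Phi_\tspec]\, s_k\cdots s_2 s_1 \Phi_T = \frac{1}{r_1(T)}\cdot [\Phi_\tspec]\, s_k\cdots s_2 \Phi_T.
\]
Expanding $s_k\cdots s_2\Phi_T$ then yields a sum over paths $(\delta_2,\ldots,\delta_k)\in\{0,1\}^{k-1}$, where $\delta_i=0$ (``stay'') contributes a factor $1/r_i$ and $\delta_i=1$ (``swap'') a factor $\sqrt{1-1/r_i^2}$, with $r_i$ the axial distance in the intermediate tableau. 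A path contributes to $\Phi_\tspec$ exactly when $s_k^{\delta_k}\cdots s_2^{\delta_2}=B^{-1}$, where $B=s_2^{\epsilon_2}\cdots s_{b-a}^{\epsilon_{b-a}}$ is characterized by $T=B\tspec$.

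The combinatorial heart of the argument is the assertion that the map $(\delta_2,\ldots,\delta_m)\mapsto s_m^{\delta_m}\cdots s_2^{\delta_2}$ is injective. I plan to prove this by induction: if $A=s_m^{\delta_m}\cdots s_2^{\delta_2}$, then $A(2)=2$ when $\delta_2=0$ (none of $s_3,\ldots,s_m$ moves $2$), while if $\delta_2=1$ one computes $A(2)=s_m^{\delta_m}\cdots s_3^{\delta_3}(3)\geq 3$; hence $\delta_2$ is determined by $A$, and multiplying by $s_2^{-\delta_2}$ reduces to a shorter word. Since $B^{-1}$ involves only $s_j$ for $j\leq J(T)$, a solution exists iff $k\geq J(T)$, and when it exists it is uniquely the \emph{natural path} $\delta_i=\epsilon_i$ for $i\leq J(T)$ and $\delta_i=0$ for $J(T)<i\leq k$. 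Writing $T_i=s_{i+1}^{\epsilon_{i+1}}\cdots s_{b-a}^{\epsilon_{b-a}}\tspec$, the step-$i$ factor along this path is $1/\re_i(T)$ when $\epsilon_i=0$ (since $T_{i-1}=T_i$), and $\sqrt{\re_i(T)^2-1}/\re_i(T)$ when $\epsilon_i=1$ (using $r_i(T_{i-1})=-r_i(T_i)=-\re_i(T)$).

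To conclude, I would identify $\beta_{T|_{\lambda/\nu}}$ explicitly by applying Proposition \ref{prop:invt_skew} along the natural path from $T\rev$ to $T|_{\lambda/\nu}$ in $\tab(\lambda/\nu)$; after relabeling, the axial distance at the swap step indexed by $\epsilon_j=1$ equals $\re_j(T)$, giving
\[
\beta_{T|_{\lambda/\nu}} = \prod_{j:\,\epsilon_j=1}\sqrt{\frac{\re_j(T)-1}{\re_j(T)+1}}.
\]
The elementary identity $\sqrt{(r-1)/(r+1)}\cdot\sqrt{r^2-1}/r=(r-1)/r$ (valid since $\re_j(T)\geq 2$ whenever $\epsilon_j=1$, by Lemma \ref{lem:positivity}) reconciles the $\beta$-factor with the swap contributions, converting $(\sqrt{1-1/\re_j^2})^{\epsilon_j}$ into $((\re_j-1)/\re_j)^{\epsilon_j}$. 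Summing the natural-path contributions over $k$ from $J(T)$ to $b-a$ then produces the stated formula. The main obstacle is the injectivity/uniqueness step isolating the natural path; the remainder is a direct but delicate computation within the Young orthonormal basis.
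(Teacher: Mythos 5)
Your proposal is correct and follows essentially the same route as the paper: expand $\rho_j(1\,2)\Phi_T$ term by term using the orthogonal form, isolate the unique "natural path" contributing to $\Phi_\tspec$, compute its weight in terms of the $\re_j(T)$, and absorb $\beta_{T|_{\lambda/\nu}}$ via the identity $\sqrt{(r-1)/(r+1)}\cdot\sqrt{(r^2-1)/r^2}=(r-1)/r$. The one place you go beyond the paper is supplying a rigorous justification (injectivity of $(\delta_2,\ldots,\delta_k)\mapsto s_k^{\delta_k}\cdots s_2^{\delta_2}$, plus the observation that elements of $\langle s_2,\ldots,s_k\rangle$ fix $J(T)+1$ while $B^{-1}$ does not) for what the paper calls a "straightforward calculation", namely that the coefficient of $\Phi_\tspec$ in $\rho_{k+1}(1\,2)\Phi_T$ vanishes unless $k\geq J(T)$, and then arises from a single path.
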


\begin{proof}
Firstly, Proposition \ref{prop:invt_skew} allows the calculation of $\beta_{T|_{\lambda/\nu}}$ by a straightforward induction on $b-a$. Namely, one shows that 
\[
\beta_{T|_{\lambda/\nu}}
= 
\prod_{j=2}^{b-a}
\Big(
\sqrt{\frac{\re_j-1 }{\re_j+1}}
\Big)^{\epsilon_j}
\]
(using $0^0=1$ in the case $\re_j=1$, for which $\epsilon_j=0$).

Now, consider $\Phi_T + 
\sum_{j=2}^{b-a}
\rho_j (1 2) \Phi_T$. Clearly $\Phi_T$ has a non-trivial coefficient of $\Phi_\tspec$ if and only if $T= \tspec$, which is equivalent to $\epsilon_j =0$ for $1 \leq j \leq b-a$. 
In this case, $\beta_{T|_{\lambda/\nu}}=1$, so this accounts for the term $\delta_{T,\tspec}$ in the statement.

Now consider $\rho_k (1 2) \Phi_T$ for $2 \leq k \leq b-a+1$. A straightforward calculation using the action of the Coxeter generators on the Young orthonormal basis shows that the coefficient of $\Phi_\tspec$ is non-zero if and only if 
$\epsilon_i =0$ for all $i>k$. This condition is equivalent to $k \geq J(T)$, by definition of the latter. 

Moreover, in this case, arguing as in Example \ref{exam:b-a=2}, the coefficient is equal to 
$$
\prod_{1\leq j \leq  k}
\Big(\frac{1}{\re_j(T)}\Big)^{1-\epsilon_j}
\Big(\sqrt{\frac{\re_j(T)^2-1}{\re_j(T)^2}}\Big)^{\epsilon_j}.
$$

Again as in the Example, using the identity
$$
\sqrt{\frac{\re_j(T)-1 }{\re_j(T)+1}} \sqrt{\frac{\re_j(T)^2 -1}{\re_j(T)^2}} = \frac{\re_j(T) -1}{\re_j(T)},
$$
under the above hypothesis that $\epsilon_i =0$ for all $i>k$,
one has the equality 
$$
\beta_{T|_{\lambda/\nu}}
\prod_{1\leq j \leq  k}
\Big(\frac{1}{\re_j(T)}\Big)^{1-\epsilon_j}
\Big(\sqrt{\frac{\re_j(T)^2-1}{\re_j(T)^2}}\Big)^{\epsilon_j}
=
\prod_{1\leq j \leq  k}
\Big(\frac{1}{\re_j(T)}\Big)^{1-\epsilon_j}
\Big(\frac{\re_j(T)-1}{\re_j(T)}\Big)^{\epsilon_j}.
$$

Summing the terms over $k\geq J(T)$ gives the required result.
\end{proof}

\begin{rem}
\ 
\begin{enumerate}
\item 
The expression in the statement of Proposition \ref{prop:calculate_coeff} is defined over $\mathbb{Q}$.
\item
If $b-a\geq 1$, it  can be rewritten as 
\[
\coeff  (\lambda,\nu,\kappa)(T)
=
\delta_{T,\tspec}
+  
\frac{1}{\re_1(T)}
\sum_{k=J(T)}^{b-a} 
\prod_{2\leq j \leq  k}
\Big(\frac{1}{\re_j(T)}\Big)^{1-\epsilon_j}
\Big(\frac{\re_j(T)-1}{\re_j(T)}\Big)^{\epsilon_j}.
\]
\end{enumerate}
\end{rem}

Proposition \ref{prop:calculate_coeff} leads to the easy case of Proposition \ref{prop:coeff}:

\begin{prop}
\label{prop:easy}
Suppose that $\lambda/\nu$ is non-empty and that  $\tspec = T\rev$. Then $\coeff  (\lambda, \nu ,\kappa) >1$.
\end{prop}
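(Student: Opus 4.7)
The plan is to read the bound directly off the explicit formula of Proposition \ref{prop:calculate_coeff}, exploiting the fact that the hypothesis $\tspec = T\rev$ forces positivity throughout. Concretely, I would show that under this hypothesis every contribution $\coeff(\lambda,\nu,\kappa)(T)$ for $T \in \langle \tspec \rangle$ is non-negative, and that the single contribution of $T = \tspec$ already exceeds $1$; the conclusion then follows by summing via Lemma \ref{lem:sum_T}.

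First I would collect the positivity inputs. By Lemma \ref{lem:positivity}, for every $T \in \langle \tspec \rangle$ one has $\re_j(T) \geq 1$ for $2 \leq j \leq b-a$; the extra hypothesis $\tspec = T\rev$ moreover gives $\re_1(T) \geq 1$. Consequently each factor $1/\re_j(T)$ appearing in the formula is a positive rational number. The admissibility condition of Definition \ref{defn:Ybar} forces $\epsilon_j = 0$ whenever $j$ and $j+1$ lie in the same row of $s_{j+1}^{\epsilon_{j+1}} \cdots s_{b-a}^{\epsilon_{b-a}} \tspec$, i.e.\ whenever $\re_j(T) = 1$; so as soon as $\epsilon_j = 1$ one has $\re_j(T) \geq 2$ and the factor $(\re_j(T)-1)/\re_j(T)$ is strictly positive. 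Hence every product in the sum of Proposition \ref{prop:calculate_coeff} is non-negative, so $\coeff(\lambda,\nu,\kappa)(T) \geq 0$ for all $T \in \langle \tspec \rangle$.

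Next I would single out $T = \tspec$, for which $\epsilon_j = 0$ for all $j$ and (by inspection of the definition) $J(\tspec) = 1$. Proposition \ref{prop:calculate_coeff} then specialises to
\[
\coeff(\lambda,\nu,\kappa)(\tspec) \;=\; 1 + \sum_{k=1}^{b-a} \prod_{j=1}^{k} \frac{1}{\re_j(\tspec)}.
\]
The hypothesis that $\lambda/\nu$ is non-empty gives $b - a \geq 1$, so the sum contains at least the strictly positive term $1/\re_1(\tspec)$; together with the non-negativity of the remaining terms this yields $\coeff(\lambda,\nu,\kappa)(\tspec) > 1$. Invoking the non-negativity of the other contributions established above, together with Lemma \ref{lem:sum_T}, one concludes
\[
\coeff(\lambda,\nu,\kappa) \;=\; \sum_{T \in \langle \tspec \rangle} \coeff(\lambda,\nu,\kappa)(T) \;\geq\; \coeff(\lambda,\nu,\kappa)(\tspec) \;>\; 1.
\]

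The argument is essentially bookkeeping once Proposition \ref{prop:calculate_coeff} is in hand. The one point requiring genuine care is the interplay between the admissibility condition of Definition \ref{defn:Ybar} and the equality case $\re_j = 1$ in Lemma \ref{lem:positivity}: it is precisely this matching that guarantees that no factor $(\re_j(T)-1)/\re_j(T)$ with $\epsilon_j = 1$ can vanish and so produce a negative surprise. This is expected to be the main (and only) subtlety.
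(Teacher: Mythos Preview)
Your argument is correct and follows essentially the same route as the paper: both use Lemma~\ref{lem:positivity} to obtain $\re_j(T)>0$ for all $j$ (including $j=1$ via the hypothesis $\tspec = T\rev$), deduce from Proposition~\ref{prop:calculate_coeff} that every $\coeff(\lambda,\nu,\kappa)(T)$ is non-negative, and then observe that the contribution from $T=\tspec$ alone already exceeds~$1$. Your write-up is simply a more detailed unpacking of the paper's one-line proof, and your explicit handling of the admissibility condition versus the equality case $\re_j(T)=1$ makes transparent a point the paper leaves implicit.
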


\begin{proof}
The hypothesis together with Lemma \ref{lem:positivity}  ensure that $\re_i(T)>0$ for all $1 \leq i \leq b-a$. Hence, by Proposition \ref{prop:calculate_coeff}, all contributions to $\coeff  (\lambda, \nu, \kappa)$ are non-negative and the contribution from $\tspec$, $1+ \frac{1}{\re_1 (T)}$, is strictly greater than $1$.
\end{proof}

It remains to treat the cases not covered by Proposition \ref{prop:easy}, namely when $\lambda/\nu$ is non-empty and  $\tspec \neq T\rev$. The difficulty in this case stems from the fact that, for $T \in \langle \tspec \rangle$,  $r_1 ^\epsilon (T)$ need not be positive. 

Under the above hypothesis,  the left-most box in $\tspec$ is labelled by $2$, with the box labelled $1$ occurring as a (different) inner corner. Then, by `removing' the box  labelled $2$ from the skew tableau and relabelling the remaining boxes of the skew tableau (other than $1$) by $n\mapsto n-1$, one obtains $\tspec$ associated to the sub skew diagram without the left-most box. This is the basis of an inductive strategy for understanding $\coeff  (\lambda, \nu, \kappa)$.

\begin{nota}
\label{nota:plus}
Suppose that $\lambda/\nu$ is non-empty and that  $\tspec \neq T\rev$. Let $\nu^+$ (respectively $\kappa^+$) be the partition obtained from $\nu$ (resp. $\kappa$) by increasing the length of the first {\em column} by one. (At the level of the Young tableaux of $\nu$ and $\kappa$, this corresponds to adding the box labelled $2$ to the respective Young tableaux; respectively, for the skew tableaux $\lambda/\nu$ and $\lambda /\kappa$, it corresponds to removing it.) 
\end{nota}

\begin{rem}
The hypothesis upon $\tspec$ serves here only to eliminate the possibility that $\tspec$ contains the configuration $\begin{ytableau} 1 & 2 & 3\end{ytableau}$, in which case it is not possible to `remove' the box labelled by $2$ and retain a skew diagram. 
\end{rem}

By construction, one has the following:

\begin{lem}
\label{lem:remove2}
Suppose that $\lambda/\nu$ is non-empty and that  $\tspec \neq T\rev$, then  $\nu^+ \vdash a+1$, $\kappa^+ \vdash a$ and 
\[
\hs{\lambda} \preceq \kappa^+ \preceq \nu^+ \preceq \lambda. 
\]
Moreover, the skew diagram $\lambda / \kappa^+$ is obtained from that of $\lambda /\kappa$ by removing the left-most box.

There is a bijection between $\tab (\lambda/ \kappa^+;\nu^+/\kappa^+)$ and the subset of $\tab (\lambda /\kappa; \nu/\kappa)$ of tableaux such that the left-most box is labelled by $2$.
\end{lem}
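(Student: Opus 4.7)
The plan is as follows. I would first identify the leftmost box $b^* = (r^*, c^*)$ of the skew diagram $\lambda/\kappa$. Because $\tspec \neq T\rev$, this box does not coincide with the single box of $\nu/\kappa$ (which carries label $1$ in $\tspec$); rather $b^* \in \lambda/\nu$, and it receives label $2$ in $\tspec$, since the relabelled restriction $\tspec|_{\lambda/\nu}$ equals $T\rev \in \tab(\lambda/\nu)$, which assigns its smallest label to the leftmost box of $\lambda/\nu$. Following Notation \ref{nota:plus}, one then sets $\kappa^+ := \kappa \cup \{b^*\}$ and $\nu^+ := \nu \cup \{b^*\}$, viewed as Young diagrams.

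Next, I would verify that $\kappa^+$ and $\nu^+$ are partitions, along with the chain $\hs{\lambda} \preceq \kappa^+ \preceq \nu^+ \preceq \lambda$. The key point is that $b^*$ is an addable corner of $\kappa$. Since $b^*$ is the leftmost box in its row of $\lambda/\kappa$, one has $c^* = \kappa_{r^*}+1$; the only remaining check (when $r^* > 1$) is $\kappa_{r^*-1} \geq c^*$. Were this to fail, then $\kappa_{r^*-1} = \kappa_{r^*} = c^*-1$, and because $\lambda_{r^*-1} \geq \lambda_{r^*} \geq c^*$, the box $(r^*-1, c^*)$ would lie in $\lambda/\kappa$, placing two boxes in column $c^*$ and contradicting the horizontal strip property. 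Hence $\kappa^+$ is a partition, and $\nu^+$ is one as well because $\nu \succeq \kappa$ and we are adding the same box. The inclusions follow from $\hs{\lambda} \preceq \kappa$, $\kappa \preceq \nu$, and $b^* \in \lambda$; the sizes $\kappa^+ \vdash a$ and $\nu^+ \vdash a+1$ are immediate.

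Finally, the equality $\lambda/\kappa^+ = \lambda/\kappa \setminus \{b^*\}$ is clear from the construction, and $\nu^+/\kappa^+ = \nu/\kappa$ as sets of boxes since $b^*$ has been added to both. For the bijection, given $T$ in the subset of $\tab(\lambda/\kappa;\nu/\kappa)$ whose leftmost box carries label $2$, I would remove the box $b^*$ and decrement by $1$ each label $\geq 3$; the inverse increments each label $\geq 2$ by $1$ and adjoins $b^*$ with label $2$. Standardness is preserved in both directions: the label-$1$ box shares neither a row with $b^*$ (else it would occupy the leftmost column of row $r^*$, which is exactly $b^*$) nor a column with $b^*$ (by the horizontal strip property of $\lambda/\kappa$), so the decrementation creates no inversion involving the $1$; in the reverse direction, any neighbour of $b^*$ in $\lambda/\kappa^+$ is necessarily to the right of $b^*$ (since $b^*$ is leftmost and the column of $b^*$ contains no other skew box) and carries a label $\geq 2$ in $T'$, hence $\geq 3$ after incrementing, consistent with the newly inserted $2$.

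The main obstacle is the addability argument in the second step, which is the sole genuinely combinatorial use of the horizontal strip hypothesis; the remaining verifications are essentially bookkeeping, provided one tracks carefully the relative position of the label-$1$ box and $b^*$.
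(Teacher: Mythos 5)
Your proof is correct and supplies the combinatorial details that the paper leaves implicit under ``By construction''; the key step, showing that the leftmost box $b^*$ of $\lambda/\kappa$ is an addable corner of both $\kappa$ and $\nu$, uses the horizontal strip property in exactly the way required, and the bijection on tableaux is the natural one. One remark worth recording: you have (correctly) interpreted the definition of $\kappa^+$ and $\nu^+$ as adding the box labelled $2$ in $\tspec$, i.e.\ the leftmost box of $\lambda/\kappa$, rather than literally increasing the length of the first column of $\kappa$ and $\nu$; these two readings differ in general --- for example, with $\lambda=(3,2)$, $\nu=(3,1)$, $\kappa=(2,1)$ the leftmost box of $\lambda/\kappa$ is $(2,2)$, whereas extending the first column would give $\kappa^+=(2,1,1)\not\preceq\lambda$, contradicting the lemma --- and only the reading you adopt is compatible with both the parenthetical gloss in the notation and the statement of the lemma itself.
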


\begin{nota}
Suppose that $\lambda/\nu$ is non-empty and that  $\tspec \neq T\rev$. 
For $T \in \tab (\lambda/\kappa^+; \nu^+/\kappa^+)$, denote by $T^{[2]} \in \tab (\lambda/ \kappa; \nu /\kappa)$ the tableau obtained by the bijection of Lemma \ref{lem:remove2}.

Denote by $\overline{\tspec} \in  \tab (\lambda/\kappa^+; \nu^+/\kappa^+)$ the tableau associated to the triple $(\lambda, \nu^+, \kappa^+)$ as in Notation \ref{nota:skew}. (This is obtained from $\tspec$ by forgetting the box labelled $2$ and renumbering.)
\end{nota}

The definition of $\langle \tspec \rangle$ given in Definition \ref{defn:Ybar} leads to the following, which is the basis for an inductive approach.

\begin{lem}
\label{lem:decompose_langle_tspec_rangle}
Suppose that $|\lambda/\nu| \geq 2$ and that  $\tspec \neq T\rev$. 
The set $\langle \tspec \rangle$ decomposes as 
$ 
\langle \tspec \rangle = 
\langle \tspec \rangle_0
\amalg
\langle \tspec \rangle_1
 $, 
where $\langle \tspec \rangle_0 = \{ T^{[2]} | T \in \langle \overline{\tspec} \rangle \}$ and 
$\langle \tspec \rangle_1 = \{ s_2 (T^{[2]}) | T \in \langle \overline{\tspec} \rangle \mbox{ such that $s_2 (T^{[2]})$ is standard} \}$.
\end{lem}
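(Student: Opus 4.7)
The plan is to peel off the first exponent $\epsilon_2$ in the expression $s_2^{\epsilon_2} s_3^{\epsilon_3} \cdots s_{b-a}^{\epsilon_{b-a}} \tspec$ defining an element of $\langle \tspec \rangle$ and to match what remains with an element of $\langle \overline{\tspec} \rangle$ via the bijection $T \mapsto T^{[2]}$ of Lemma \ref{lem:remove2}. The key algebraic identity driving this is that, for any tableau $W$ of shape $\lambda/\kappa^+$,
\[
(s_j W)^{[2]} = s_{j+1} W^{[2]},
\]
since passing from $W$ to $W^{[2]}$ shifts every label $\geq 2$ up by one. Combined with $\overline{\tspec}^{[2]} = \tspec$, this yields
\[
\bigl(s_2^{\epsilon'_2} \cdots s_{b-a-1}^{\epsilon'_{b-a-1}} \overline{\tspec}\bigr)^{[2]}
= s_3^{\epsilon'_2} \cdots s_{b-a}^{\epsilon'_{b-a-1}} \tspec
\]
for any sequence $(\epsilon'_2, \ldots, \epsilon'_{b-a-1})$.

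Given $U = s_2^{\epsilon_2} s_3^{\epsilon_3} \cdots s_{b-a}^{\epsilon_{b-a}} \tspec \in \langle \tspec \rangle$, I would set $V := s_3^{\epsilon_3} \cdots s_{b-a}^{\epsilon_{b-a}} \tspec$. By the remark following Definition \ref{defn:Ybar}, $V$ is standard, and its leftmost box carries label $2$: the hypothesis $\tspec \neq T\rev$ forces this leftmost box of $\lambda/\kappa$ to lie in $\lambda/\nu$ and hence to be labelled $2$ in $\tspec$, and the generators $s_j$ with $j \geq 3$ never touch label $2$. Lemma \ref{lem:remove2} then produces a unique $T \in \tab(\lambda/\kappa^+; \nu^+/\kappa^+)$ with $V = T^{[2]}$, and the identity above identifies $T$ with $s_2^{\epsilon_3} \cdots s_{b-a-1}^{\epsilon_{b-a}} \overline{\tspec}$.

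The next step is to verify that $T$ actually lies in $\langle \overline{\tspec} \rangle$. The admissibility condition for $T$ at index $i$ (with $2 \leq i \leq b-a-1$) says $\epsilon_{i+1} = 0$ whenever labels $i, i+1$ lie in the same row of $s_{i+1}^{\epsilon_{i+2}} \cdots s_{b-a-1}^{\epsilon_{b-a}} \overline{\tspec}$; applying $[2]$ shifts these labels to $i+1, i+2$ in $s_{i+2}^{\epsilon_{i+2}} \cdots s_{b-a}^{\epsilon_{b-a}} \tspec$, which is exactly the admissibility condition for $U$ at index $i+1$, satisfied by hypothesis. Once $T \in \langle \overline{\tspec} \rangle$ is in hand, the two cases are immediate: if $\epsilon_2 = 0$ then $U = T^{[2]} \in \langle \tspec \rangle_0$, while if $\epsilon_2 = 1$ then $U = s_2 T^{[2]} \in \langle \tspec \rangle_1$, with $s_2 T^{[2]}$ standard because $U$ is.

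Conversely, for $T \in \langle \overline{\tspec} \rangle$, $T^{[2]}$ belongs to $\langle \tspec \rangle$ (with $\epsilon_2 = 0$, which is trivially admissible at index $2$), and $s_2 T^{[2]}$ belongs to $\langle \tspec \rangle$ whenever it is standard; in that case admissibility at index $2$ is automatic, since standardness forces labels $2,3$ not to share a row of $T^{[2]}$. Disjointness is clear: the leftmost box of $\lambda/\kappa$ carries label $2$ in every element of $\langle \tspec \rangle_0$ (inherited from $T^{[2]}$) and label $3$ in every element of $\langle \tspec \rangle_1$ (after applying $s_2$). There is no real obstacle here; the argument is a bookkeeping exercise around the identity $(s_j W)^{[2]} = s_{j+1} W^{[2]}$, with the only subtlety being to align the shift of indices between the admissibility conditions for $\langle \tspec \rangle$ and for $\langle \overline{\tspec} \rangle$.
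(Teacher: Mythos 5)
Your proof is correct and takes the same route as the paper, which simply observes that an element of $\langle\tspec\rangle$ is determined by the sequence $(\epsilon_i)$ and that the decomposition corresponds to the two cases $\epsilon_2\in\{0,1\}$. You have spelled out the index-shift $(s_j W)^{[2]}=s_{j+1}W^{[2]}$ and the matching of admissibility conditions that the paper leaves implicit.
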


\begin{proof}
By Definition \ref{defn:Ybar}, an element of $\langle \tspec \rangle$ is determined by an expression
$$
s_2^{\epsilon_2}
s_3^{\epsilon _3}
\ldots 
s_{b-a}^{\epsilon_{b-a}}
\tspec
$$
for a unique sequence $(\epsilon_i)$. The decomposition into the two components corresponds to the two possibilities $\epsilon_2 \in \{ 0, 1\}$.
\end{proof}

\begin{prop}
\label{prop:inductive_theta}
Suppose that $|\lambda/\nu| \geq 2$ with  $\tspec \neq T\rev$. Then 
\[
\coeff  (\lambda, \nu, \kappa)
=
\Big(1 + \frac{1}{r_1 (\tspec)}\Big) \coeff  (\lambda, \nu^+, \kappa^+).
\]
\end{prop}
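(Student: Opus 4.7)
The strategy is to prove the identity term by term over $U \in \langle\overline{\tspec}\rangle$. By Lemma~\ref{lem:sum_T} applied to both sides, together with the decomposition $\langle\tspec\rangle = \langle\tspec\rangle_0 \amalg \langle\tspec\rangle_1$ of Lemma~\ref{lem:decompose_langle_tspec_rangle}, it suffices to verify for each $U \in \langle\overline{\tspec}\rangle$, setting $T_0 := U^{[2]}$ and $T_1 := s_2 T_0$ (with $\coeff(\lambda,\nu,\kappa)(T_1)$ interpreted as zero when $s_2 T_0$ is not standard), the identity
\[
\coeff(\lambda,\nu,\kappa)(T_0) + \coeff(\lambda,\nu,\kappa)(T_1) = \bigl(1 + \tfrac{1}{r_1(\tspec)}\bigr)\,\coeff(\lambda,\nu^+,\kappa^+)(U).
\]
Summing this over $U$ yields Proposition~\ref{prop:inductive_theta}.

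Set $R := r_1(\tspec)$ and $\tilde r := \re_1(U)$. Since the Coxeter generators $s_3,\dots,s_{b-a}$ appearing in $T_0$ preserve the positions of the boxes labelled $1$ and $2$, and the relabelling $n \mapsto n-1$ (for $n \geq 3$) identifies $T_0$ away from box $2$ with $U$, Lemma~\ref{lem:axial_additivity} yields the axial-distance dictionary
\[
\re_1(T_0) = R,\qquad \re_2(T_0) = \tilde r - R,\qquad \re_j(T_0) = \re_{j-1}(U)\ \ (j \geq 3),
\]
together with $\re_1(T_1) = \tilde r$ and $\re_j(T_1) = \re_j(T_0)$ for $j \geq 2$. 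Substituting into Proposition~\ref{prop:calculate_coeff}, for each $k \geq 2$ the $k$-th product in $\coeff(T_0)$ factors as $\frac{\tilde r}{R(\tilde r - R)}\,Q_{k-1}(U)$, and the $k$-th product in $\coeff(T_1)$ factors as $\frac{\tilde r - R - 1}{\tilde r - R}\,Q_{k-1}(U)$, where $Q_{k'}(U)$ denotes the analogous $k'$-fold product appearing in $\coeff(\lambda,\nu^+,\kappa^+)(U)$ from Proposition~\ref{prop:calculate_coeff}. A direct comparison of the summation ranges gives $\sum_{k' \geq J(T_0) - 1} Q_{k'}(U) = \coeff(\lambda,\nu^+,\kappa^+)(U)$ for every $U$, and $\sum_{k' \geq J(T_1) - 1} Q_{k'}(U) = \coeff(\lambda,\nu^+,\kappa^+)(U) - \delta_{U,\overline{\tspec}}$. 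The generic case of the target identity then collapses onto the single algebraic simplification
\[
\frac{\tilde r}{R(\tilde r - R)} + \frac{\tilde r - R - 1}{\tilde r - R} = \frac{R+1}{R}.
\]

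Two edge cases need separate treatment. When $U = \overline{\tspec}$ (so $T_0 = \tspec$), Proposition~\ref{prop:calculate_coeff} contributes the extra $\delta_{T_0,\tspec} = 1$ and the $k = 1$ term $1/R$, neither of which is captured by the $k \geq 2$ factorisation above; together they sum to $1 + 1/R$, which absorbs exactly $(1 + 1/R)$ times the $-\delta_{U,\overline{\tspec}}$ subtracted from the $T_1$-sum. When $s_2 T_0$ is not standard the $T_1$-term is absent altogether; a short geometric check shows that this forces boxes $2$ and $3$ of $T_0$ to lie in the same row in adjacent columns, whence $\re_2(T_0) = 1$ and $\tilde r = R + 1$. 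But then $(\tilde r - R - 1)/(\tilde r - R) = 0$, so the missing $T_1$-contribution is legitimately zero and the identity survives. The only non-mechanical part of the proof is this bookkeeping, and in particular the observation that the geometric non-standardness condition matches precisely the vanishing of the would-be $T_1$-coefficient; once this is in place, the argument reduces to the one-line algebraic identity above.
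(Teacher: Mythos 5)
Your proposal is correct and follows essentially the same route as the paper's own proof: both decompose $\langle\tspec\rangle = \langle\tspec\rangle_0 \amalg \langle\tspec\rangle_1$ via Lemma~\ref{lem:decompose_langle_tspec_rangle}, compare term-by-term over $\langle\overline{\tspec}\rangle$ using the axial-distance relations $\re_1(T_0)=r_1(\tspec)$, $\re_1(T_1)=\re_1(U)$, $\re_2(T_0)=\re_1(U)-r_1(\tspec)$ coming from Lemma~\ref{lem:axial_additivity}, and reduce the generic case to the same one-line rational identity, with the same pair of edge cases. The only difference is presentational: the paper packages the bookkeeping through the auxiliary quantities $\coeff_{\geq 2}(T')$, $\coeff_{\geq 1}(T)$ and $\overline{\coeff}_{\geq 1}(T)$ and then derives linear relations between them, whereas you factorize the products in the explicit formula of Proposition~\ref{prop:calculate_coeff} directly and track the shift of summation range $k \mapsto k-1$ by hand. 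Your treatment of the non-standard case is slightly slicker in that you observe the would-be $T_1$-factor $(\re_2(T_0)-1)/\re_2(T_0)$ vanishes automatically when $\re_2(T_0)=1$, so the generic formula degenerates gracefully, rather than verifying that case separately; but the underlying algebra (using $r_1(T)=1+r_1(T^{[2]})$ with $r_2(T^{[2]})=1$) is the same.
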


\begin{rem}
This result generalizes the case $|\lambda/\nu| = 2$ that is treated in Example \ref{exam:b-a=2}. In particular, in the example, it was shown that 
 \[
\coeff  (\lambda, \nu, \kappa)
=
\Big(1 + \frac{1}{r_1 (\tspec)}\Big) \Big( 1 + \frac{1} {a(3,1)(\tspec)} \Big) .
\]
To conclude in this case, it suffices to observe that $\coeff  (\lambda, \nu^+, \kappa^+)    = 
\Big( 1 + \frac{1} {a(3,1)(\tspec)} \Big)$, which follows from Example \ref{exam:b-a<2}, using the fact that $r_1 (\overline{\tspec}) = a (3,1) (\tspec)$.
\end{rem}

\begin{proof}[Proof of Proposition \ref{prop:inductive_theta}]
For the purposes of this proof, we introduce the following notation: 
\begin{enumerate}
\item 
for $T' \in \langle \tspec \rangle$, define $\coeff_{\geq 2} (T')$ by 
\[
\coeff (\lambda, \nu, \kappa) (T') = \delta_{T', \tspec} \big( 1 + \frac{1} {r_1 (T')} \big) + \coeff_{\geq 2} (T'); 
\]
\item 
for $T \in \langle \overline{\tspec} \rangle$, define $\coeff_{\geq 1}(T)$ and $\overline{\coeff}_{\geq 1} (T)$ by
\[
\coeff (\lambda, \nu^+, \kappa^+) (T) =
\delta_{T, \overline{\tspec}}  + \coeff_{\geq 1} (T)
=
 \delta_{T, \overline{\tspec}}  + \frac{1}{r_1 (T)} \overline{\coeff}_{\geq 1} (T). 
\]
\end{enumerate}
The key to the proof is to relate $\coeff_{\geq 2} (T^{[2]})$ and $\coeff_{\geq 2} (s_2 T^{[2]})$ (when $s_2 T^{[2]}$ is standard) to $\coeff_{\geq 1}(T)$ and $\overline{\coeff}_{\geq 1} (T)$. 

First one checks (for example, using the explicit description given in Proposition \ref{prop:calculate_coeff}) that 
\begin{eqnarray*}
\coeff_{\geq 2}(T^{[2]}) &=&
\frac{1}{r_1 (T^{[2]}) r_2(T^{[2]}) }\overline{\coeff}_{\geq 1} (T) 
\\
&=&
\frac{r_1 (T)}{r_1 (T^{[2]}) r_2(T^{[2]}) }\coeff_{\geq 1} (T), 
\end{eqnarray*}
where the second equality follows from the relationship between $\coeff_{\geq 1}(T)$ and $\overline{\coeff}_{\geq 1} (T)$. 

Now, $s_2 T^{[2]}$ is {\em not} standard if and only if the configuration $\begin{ytableau}2 & 3 \end{ytableau}$ occurs in $T^{[2]}$. In this case the additivity of axial distances given by Lemma \ref{lem:axial_additivity} implies the equality 
\[
r_1 (T) = 1 + r_1(T^{[2]}),
\]
using $r_2 (T^{[2]}) =1$. Thus, in this case, one has the equality:
\[
\coeff_{\geq 2}(T^{[2]}) 
= 
\Big( 1 + \frac{1}{r_1 (\tspec) } \Big) \coeff_{\geq 1}(T),
\]
using that $r_1 (T^{[2]}) = r_1 (\tspec)$, by construction of $T^{[2]}$.

Now consider the case where $s_2 T^{[2]}$ is standard. One has the identity $r_1 (T)= r_1 (s_2 T^{[2]})$ and, reasoning as for $\coeff_{\geq 2} (T^{[2]})$, one deduces the equality:
\[
\coeff_{\geq 2} (s_2 T^{[2]}) = \frac{r_2 (T^{[2]}) -1 }{r_2 (T^{[2]}) } \coeff_{\geq 1} (T).
\]
(compare also Example  \ref{exam:b-a=2}). 

Thus, 
\[
\coeff_{\geq 2}(T^{[2]}) 
+ 
\coeff_{\geq 2} (s_2 T^{[2]})
= 
\frac{1}{r_2 (T^{[2]}) } 
\Big( 
\frac{r_1(T)}{r_1(T^{[2]}) } 
+ (r_2 (T^{[2]}) -1 ) 
\Big) 
\coeff_{\geq 1} (T) .
\]
Again using Lemma \ref{lem:axial_additivity}, one has $r_1 (T) = r_1 (T^{[2]}) + r_2 (T^{[2]})$, so that 
\[
\frac{r_1(T)}{r_1(T^{[2]}) } 
+ (r_2 (T^{[2]}) -1 )
= 
r_2(T^{[2]}) \Big ( 1 + \frac{1}{r_1 (T^{[2]})} \Big).
\]
This gives:
\[
\coeff_{\geq 2}(T^{[2]}) 
+ 
\coeff_{\geq 2} (s_2 T^{[2]})
= 
\Big ( 1 + \frac{1}{r_1 (\tspec)} \Big)\coeff_{\geq 1} (T) 
 .
\]

Summing over $T \in \langle \overline{\tspec} \rangle$ gives the result, by Lemma \ref{lem:decompose_langle_tspec_rangle} together with Lemma \ref{lem:sum_T}.
\end{proof}

\begin{proof}[Proof of Proposition \ref{prop:coeff}]
By inspection, the result holds in the case $|\lambda/\nu|\leq 1$ (for example, use 
Example \ref{exam:b-a<2}). Proposition  \ref{prop:easy} establishes the result in the case  $\tspec = T\rev$. These form the initial cases for an inductive proof. 

The inductive step treats the case $|\lambda/\nu| \geq 2$ and $\tspec \neq T\rev$. Proposition \ref{prop:inductive_theta} gives the equality:
\[
\coeff  (\lambda, \nu, \kappa)
=
\Big(1 + \frac{1}{r_1 (\tspec)}\Big) \coeff  (\lambda, \nu^+, \kappa^+).
\]

The inductive hypothesis implies that $\coeff  (\lambda, \nu^+, \kappa^+)>0$ and $(1 + \frac{1}{r_1 (\tspec)})\geq \frac{1}{2}$, since $\frac{1}{r_1 (\tspec)} \geq - \frac{1}{2}$. The result follows.
\end{proof}

 \section{Higher $\finj$-homology of $\kring \hfi (- , \mathbf{b})\trans$}
 \label{sect:homology}
 
 The purpose of this Section is to place Theorem \ref{thm:coker}  in the more general context of studying $\finj$-homology. Recall that $\fb \subset \finj$ is the maximal subgroupoid; the category of functors from $\finj$ to $\kring$-modules is denoted $\fimod$ (respectively $\fbmod$ for functors on $\fb$) and sometimes referred to as $\finj$-modules (resp. $\fb$-modules).
 
There is an exact extension functor 
 $
 \fbmod \rightarrow \fimod
 $ 
 that sends a $\fb$-module $G$ to the $\finj$-module taking the same values and on which 
 morphisms of $\finj$ that are not bijections act via zero.  This has a left adjoint 
 \[
 H^\finj_0 : \fimod \rightarrow \fbmod.
 \] 
 This identifies explicitly as follows: for $F$ a $\finj$-module, $(H^\finj_0 F)(\mathbf{a})$ is the cokernel of the map
 \[
 F(\mathbf{a-1}) \uparrow _{\sym_{a-1}}^{\sym_a} 
 \rightarrow 
 F(\mathbf{a}) 
 \] 
 induced by $\iota_{a-1, a}$. Using this, one has:

 \begin{prop}
 \label{prop:coker_H0}
 For $a, b \in \nat$, there is a natural isomorphism 
 \[
 \mathrm{Coker} \tr_{a,b} \cong \big(H^\finj_0 \kring \hfi (-, \mathbf{b})\trans \big) (\mathbf{a}). 
 \]
 \end{prop}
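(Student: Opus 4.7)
The plan is essentially a definition chase: the result is obtained by specializing the explicit formula for $H_0^\finj$ already recorded in the paper to the functor $F = \kring \hfi(-,\mathbf{b})\trans$ and then identifying the resulting map with $\tr_{a,b}$.

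First, I would apply the explicit description
\[
(H^\finj_0 F)(\mathbf{a}) \;=\; \mathrm{Coker}\bigl( F(\mathbf{a-1})\uparrow_{\sym_{a-1}}^{\sym_a} \to F(\mathbf{a}) \bigr)
\]
with $F = \kring\hfi(-,\mathbf{b})\trans$, where the map is induced from $F(\iota_{a-1,a})$ via the induction--restriction adjunction. This immediately rewrites $(H^\finj_0 \kring \hfi(-,\mathbf{b})\trans)(\mathbf{a})$ as the cokernel of a map
\[
\kring\hfi(\mathbf{a-1},\mathbf{b})\uparrow_{\sym_{a-1}}^{\sym_a} \;\longrightarrow\; \kring\hfi(\mathbf{a},\mathbf{b}).
\]

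Second, I would verify that this induced map coincides with $\tr_{a,b}$. By Notation \ref{nota:tr}, $\tr_{a,b}$ is defined as the $\sym_a\op\times\sym_b$-equivariant adjoint of the map (\ref{eqn:a-1_a}), which is precisely $\kring\hfi(\iota_{a-1,a},\mathbf{b})\trans$. Since the adjoint of a fixed morphism under the induction--restriction adjunction is unique, the two maps agree. Taking cokernels gives the desired isomorphism, and naturality of the construction in $\mathbf{b}\in \fb$ delivers the $\sym_b$-equivariance automatically.

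The only point that requires any thought is the general formula for $H_0^\finj$ itself, i.e.\ that the left adjoint to extension by zero is given by the stated cokernel. This is standard: a morphism in $\finj$ out of $\mathbf{a}$ is either a bijection (contributing the ``groupoid part'' that survives in $\fbmod$) or factors as $\iota_{a-1,a}\circ\sigma$ for some $\sigma\in\sym_a$ followed by a morphism from $\mathbf{a-1}$, so killing the image of the induced map from $F(\mathbf{a-1})\uparrow_{\sym_{a-1}}^{\sym_a}$ exactly enforces the universal property of the left adjoint at the object $\mathbf{a}$. There is no real obstacle beyond carefully tracking the adjunctions.
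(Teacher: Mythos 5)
Your proposal is correct and follows essentially the same route as the paper: the paper records the explicit cokernel description of $(H_0^\finj F)(\mathbf{a})$ and then states Proposition \ref{prop:coker_H0} as an immediate consequence, which is exactly your definition-chase identifying that induced map with $\tr_{a,b}$ via the induction--restriction adjunction. Your final paragraph, sketching why the left adjoint to extension by zero has the stated cokernel form, is a reasonable (and standard) justification of a step the paper simply asserts.
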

 
 By definition, $\finj$-homology is given by the left derived functors of $H^\finj_0$. It is natural to seek to identify the functors
 \[
 (\mathbf{a}, \mathbf{b}) 
 \mapsto 
 \big(H^\finj_n \kring \hfi (-, \mathbf{b})\trans \big) (\mathbf{a})
 \]
 for $n \in \nat$, 
 generalizing the case $n=0$ given by Theorem \ref{thm:coker}.
 
 The main result of this Section, Proposition \ref{prop:subset_H_finj}, gives a lower bound for this $\finj$-homology which, conjecturally, coincides with the $\finj$-homology.
 
\subsection{$\finj$-homology - some recollections}

The $\finj$-homology of $F \in \ob \fimod$ can be calculated as the homology of an explicit Koszul complex $\kz_\bullet F$ in $\fbmod$ such that $\kz_\bullet$ is an exact functor from $\fimod$ to chain complexes in $\fbmod$ \cite{MR3654111,MR3603074}.

To describe the part of the structure that is required below, first recall the convolution product on $\fbmod$. This is the  symmetric monoidal structure $(\fbmod , \conv, \kring \langle 0 \rangle)$, given for $\fb$-modules $G_1$, $G_2$ by 
\[
G_1 \conv G_2 (X) = \bigoplus_{X = X_1 \amalg X_2} G_1(X_1) \otimes G_2(X_2),
\]
where the sum is taken over the set of ordered decompositions of $X$ into two subsets. (The $\fb$-module $\kring \langle 0 \rangle$ is $\kring$ evaluated on $\emptyset$ and zero evaluated on a non-empty finite set.)

Next, recall the {\em orientation} $\fb$-module $\ori$ that is given on $X \in \ob \fb$ by
\[
\ori (X) = \Lambda^{|X|} (\kring X),
\]
the top exterior power of the $\kring$-linearization of $X$. Thus $\ori (\mathbf{n})$ is the signature representation of $\sym_n$, for $n \in \nat$. This is understood to have  {\em homological degree} $n$. 

For $F$ a $\finj$-module, by restricting to $\fb$ one can form the convolution product $\ori \conv F$ in $\fbmod$. The Koszul complex of $F$ in $\fbmod$ has the form
\[
(\kz_\bullet F, d)
= ( \ori \conv F, d),
\]
in which the differential takes into account the $\finj$-module structure of $F$.

\begin{rem}
In Section \ref{subsect:revisit_kz}, the associated complex given by passage to Schur bifunctors is described explicitly in the case of interest, i.e., when $F$ is $\kring \hfi (-, \mathbf{b}) \trans$. 
\end{rem}

Evaluating on $\mathbf{a}$, for $a \in \nat$, in homological degree $n$ with $0 \leq n \leq a$, one has:
\[
\kz_n F (\mathbf{a}) 
\cong 
\big(\mathrm{sgn}_n \otimes F(\mathbf{a-n})\big) \uparrow _{\sym_n \times \sym_{a-n} }^{\sym_a} 
\]
as $\sym_a$-modules. For $n >a$, $\kz_n F (\mathbf{a}) =0$.

\subsection{The case $F=\kring \hfi (- , \mathbf{b})\trans$}

Henceforth $\kring$ is taken to be a field of characteristic zero. 

We fix $b \in \nat$ and consider the $\finj$-module $F=\kring \hfi (- , \mathbf{b})\trans$. Proposition \ref{prop:inj_comp_fact} identified the composition factors of the underlying $\fb$-module in $\sym_b$-modules. Thus we can identify the composition factors occurring in the Koszul complex $\kz_\bullet F$, as below.

The following is the counterpart of Definition \ref{defn:horizontal_strip}:

\begin{defn}
\label{defn:vstrip}
For partitions $\lambda$, $\mu$ such that $\mu \preceq \lambda$, the skew partition $\lambda/ \mu$ is a vertical strip if it contains at most one box in each row. 
\end{defn}

The following gives the counterpart of $\hs{\lambda}$ of Notation \ref{nota:hs}:

\begin{nota}
\label{nota:vstrip}
For a partition $\lambda$, denote by $\vstrip{\lambda}$ the partition  $\vstrip{\lambda}_i = \lambda_i -1$ if $\lambda_i>0$. (In terms of Young diagrams,  $\vstrip{\lambda}$ is obtained from $\lambda$ by removing the first column.)
\end{nota}

\begin{rem}
\label{rem:vstrip}
Recall that the transpose of a partition $\lambda$ is the partition $\lambda^\dagger$ what has Young diagram given by reflecting that of $\lambda$ in the diagonal, thus interchanging rows and columns. For instance, the transpose of $(n)$ is $(1^n)$ and vice versa.

If $\mu \preceq \lambda$ then $\mu^\dagger \preceq \lambda^\dagger$ and  $\lambda/\mu$ is a vertical strip if and only if $\lambda^\dagger /\mu^\dagger$ is a horizontal strip. In particular, the `transpose' to Lemma \ref{lem:hs} gives that $\lambda/ \mu$ is a vertical strip if and only if $\vstrip{\lambda} \preceq \mu \preceq \lambda$.
\end{rem}

Pieri's rule gives:
 
\begin{lem}
\label{lem:pieri}
For $\nu \vdash t $ and $n \in \nat$, 
\[
(S^\nu \boxtimes \mathrm{sgn}_n)\uparrow_{\sym_t \times \sym_n}^{\sym_{t+n}}
\cong 
\bigoplus_{ 
\substack{\mu \vdash t+n \\ \vstrip{\mu} \preceq \nu \preceq \mu}}
S^\mu.
\]
Equivalently, the sum is indexed over partitions $\mu$ such that $\nu \preceq \mu$ and $\mu /\nu$ is a vertical strip.
\end{lem}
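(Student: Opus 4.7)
The plan is to deduce this from the classical (horizontal) Pieri rule, together with the transpose-partition identity $S^\lambda\otimes\mathrm{sgn}\cong S^{\lambda^\dagger}$ and the description of vertical strips given in Remark \ref{rem:vstrip}. Note that $\mathrm{sgn}_n = S^{(1^n)}$, so the claim is exactly the branching rule predicted by the column-Pieri formula for the product $s_\nu\cdot s_{(1^n)}$ in the ring of symmetric functions under the Frobenius characteristic map; thus, up to invoking this standard fact, essentially nothing is to prove.

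If I want to avoid directly citing the column-Pieri rule and reduce instead to the horizontal one (which appears naturally in the same vein as Proposition \ref{prop:restrict_S_lambda}), the argument will go as follows. Tensoring both sides of the desired isomorphism with $\mathrm{sgn}_{t+n}$ converts the question into the computation of
\[
\bigl(S^{\nu^\dagger}\boxtimes S^{(n)}\bigr)\uparrow_{\sym_t\times\sym_n}^{\sym_{t+n}},
\]
using the fact that $\mathrm{sgn}_{t+n}$ restricts to $\mathrm{sgn}_t\boxtimes\mathrm{sgn}_n$ along the Young subgroup, commutes with induction, and that $S^\nu\otimes\mathrm{sgn}_t\cong S^{\nu^\dagger}$ and $S^{(1^n)}\otimes\mathrm{sgn}_n\cong S^{(n)}$. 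The horizontal Pieri rule then identifies this induced module with $\bigoplus_{\mu'/\nu^\dagger\text{ horizontal strip}}S^{\mu'}$. Tensoring back with $\mathrm{sgn}_{t+n}$ and setting $\mu=(\mu')^\dagger$ yields $\bigoplus S^\mu$, where the sum is over partitions $\mu\vdash t+n$ such that $\mu^\dagger/\nu^\dagger$ is a horizontal strip, i.e.\ such that $\mu/\nu$ is a vertical strip. Finally, Remark \ref{rem:vstrip} rewrites the indexing condition as $\vstrip{\mu}\preceq\nu\preceq\mu$, giving the stated formula.

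The content is entirely classical, so there is no genuine obstacle; the only item requiring care is bookkeeping for the two equivalent indexing descriptions (``$\mu/\nu$ is a vertical strip'' versus ``$\vstrip{\mu}\preceq\nu\preceq\mu$''), whose equivalence is supplied by Remark \ref{rem:vstrip}. Alternatively, one can give the entire argument in a single line by invoking the column-Pieri identity in $\Lambda$ and applying the Frobenius characteristic correspondence; this is the shorter route and is what I would include in the final text.
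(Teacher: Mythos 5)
Your proposal is correct and matches the paper, which simply cites Pieri's rule without giving details. Your extra remark — deducing the column version from the row version by tensoring with $\mathrm{sgn}_{t+n}$, using the projection formula for induction together with $S^\lambda\otimes\mathrm{sgn}\cong S^{\lambda^\dagger}$, and then translating ``horizontal strip for $\mu^\dagger/\nu^\dagger$'' into ``vertical strip for $\mu/\nu$'' via Remark \ref{rem:vstrip} — is a standard and accurate elaboration of the same fact.
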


\begin{hyp}
\label{hyp:ab}
Henceforth the bidegree $(a,b)\in \nat^{\times 2}$ is fixed. (Note that we do not suppose that $a \leq b$.) 
\end{hyp}

Consider the $n$th homological degree term of the Koszul complex:
\[
\big(\mathrm{sgn}_n \otimes \kring \hfi (\mathbf{a-n}, \mathbf{b})\big) \uparrow_{\sym_b \times (\sym_n \times \sym_{a-n}) } ^{\sym_b \times \sym_a}.
\]
This is zero if either $a >b+n $ or $a <n$. 

\begin{nota}
\label{nota:pair}
For $(a,b)$ and $n$ as above and $\nu \vdash a-n$, denote by $\pair(a, b;\nu)$ the set of pairs of partitions $(\lambda \vdash b, \mu\vdash a)$ such that the following conditions are satisfied:
\begin{enumerate}
\item 
$\hs{\lambda} \preceq \nu \preceq \lambda$;
\item
$\vstrip{\mu} \preceq \nu \preceq \mu$.
\end{enumerate}
(Thus $\lambda/\nu$ is a horizontal strip and $\mu/\nu$ is a vertical strip.)
\end{nota}

Proposition \ref{prop:inj_comp_fact}, Lemma \ref{lem:pieri}, together with the definition of $ \pair (a,b;\nu)$ imply the following:

\begin{lem}
\label{lem:kz_n}
For $(a,b)$ and $n$ as above, there is an isomorphism of $\sym_b \times \sym_a$-modules:
\[
\big( \kz_n \kring \hfi (-, \mathbf{b}) \big)(\mathbf{a})
\cong 
\bigoplus_{\nu \vdash a-n}\bigoplus_{(\lambda,\mu)\in \pair (a,b;\nu)} S^\lambda  \boxtimes S^\mu.
\]
\end{lem}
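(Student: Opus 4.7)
The plan is to unwind the description of $\kz_n F(\mathbf{a})$ recalled just before the statement, substitute the known $\fb$-module decomposition of $F=\kring\hfi(-,\mathbf{b})\trans$, and then apply Pieri's rule on the $\sym_a$ factor. The $\sym_b$-factor is untouched throughout, so the whole computation is really about the $\sym_a$-side.

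First I would start from the isomorphism
\[
\big(\kz_n \kring\hfi(-,\mathbf{b})\trans\big)(\mathbf{a})
\cong
\big(\mathrm{sgn}_n \otimes \kring\hfi(\mathbf{a-n},\mathbf{b})\trans\big)
\uparrow_{\sym_n \times \sym_{a-n}}^{\sym_a},
\]
which is $\sym_b\times\sym_a$-equivariant, with $\sym_b$ acting only on the second tensor factor. Applying Proposition \ref{prop:inj_comp_fact} to $\kring\hfi(\mathbf{a-n},\mathbf{b})\trans$ as a $\sym_b\times\sym_{a-n}$-module gives
\[
\kring\hfi(\mathbf{a-n},\mathbf{b})\trans
\cong
\bigoplus_{\substack{\lambda \vdash b,\ \nu \vdash a-n \\ \hs{\lambda}\preceq \nu \preceq \lambda}}
S^\lambda \boxtimes S^\nu.
\]

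Next, using that induction distributes over direct sums and commutes with exterior tensor products (in the trivial sense that $\sym_b$ is unaffected), I would rewrite the Koszul term as
\[
\bigoplus_{\substack{\lambda \vdash b,\ \nu \vdash a-n \\ \hs{\lambda}\preceq \nu \preceq \lambda}}
S^\lambda \boxtimes
\Big( (S^\nu \boxtimes \mathrm{sgn}_n)\uparrow_{\sym_{a-n}\times \sym_n}^{\sym_a}\Big).
\]
Now Pieri's rule as recalled in Lemma \ref{lem:pieri} identifies each inner induction as $\bigoplus_{\mu\vdash a,\ \vstrip{\mu}\preceq \nu \preceq \mu} S^\mu$. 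Reindexing the triple sum by $\nu$, then $(\lambda,\mu)$, and recognising the index set as exactly $\pair(a,b;\nu)$ from Notation \ref{nota:pair}, yields the claimed formula.

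There is no real obstacle here: both of the representation-theoretic inputs (Proposition \ref{prop:inj_comp_fact} and Pieri's rule) have already been set up, and the argument is simply bookkeeping of induction and exterior tensor products. The only point that deserves a sentence of justification is the $\sym_b\times\sym_a$-equivariance: the $\sym_b$-action commutes with the induction from $\sym_{a-n}\times\sym_n$ to $\sym_a$ because it acts trivially on the $\mathrm{sgn}_n$ factor and commutes with the $\sym_{a-n}$-action on $\kring\hfi(\mathbf{a-n},\mathbf{b})\trans$, so the decomposition obtained on the $\sym_a$-side is automatically compatible with the $\sym_b$-action inherited from the first Pieri step.
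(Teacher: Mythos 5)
Your argument is correct and is exactly the paper's intended proof: the paper simply cites Proposition \ref{prop:inj_comp_fact}, Lemma \ref{lem:pieri}, and the definition of $\pair(a,b;\nu)$, and your proposal carries out precisely that bookkeeping. No differences worth noting.
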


\begin{rem}
\label{rem:lambda_nu_kz}
Given $(\lambda,\mu)\in \pair (a,b;\nu)$, the homological degree $n$ can be recovered as $|\mu/\nu|$.
\end{rem}

\subsection{Reindexing by $\M$}

So as to focus upon the simple composition factors $S^\lambda \boxtimes S^\mu$, we  introduce the following:

\begin{nota}
\label{nota:M}
For a fixed pair $(\lambda \vdash b, \mu \vdash a)$ let $\M (\lambda, \mu)$ denote the set of partitions $\nu$ such that both the following conditions hold:
\begin{enumerate}
\item 
$\hs{\lambda} \preceq \nu \preceq \lambda$;
\item
$\vstrip{\mu} \preceq \nu \preceq \mu$.
\end{enumerate}
\end{nota}

\begin{nota}
For partitions $\lambda$, $\mu$, let $\lambda \cap \mu$ and $\lambda \cup \mu$ be respectively the infimum and supremum with respect to $\preceq$. (With respect to the associated Young diagrams, these correspond to the intersection and the union, in the obvious sense).
\end{nota}

By construction, Lemma \ref{lem:kz_n} can be reformulated as:

\begin{lem}
\label{lem:M_kz}
For $(a,b)$ and $n$ as above, there is an isomorphism of $\sym_b \times \sym_a$-modules:
\[
\big( \kz_n \kring \hfi (-, \mathbf{b}) \big)(\mathbf{a})
\cong 
\bigoplus_{\lambda \vdash b, \mu \vdash a}
\bigoplus_{\substack{\nu \in \M (\lambda, \mu) \\ \nu \vdash a-n}} S^\lambda  \boxtimes S^\mu.
\]

In particular, $S^\lambda \boxtimes S^\mu $ occurs in the Koszul complex if and only if the following equivalent conditions are satisfied:
\begin{enumerate}
\item 
 $\lambda \cap \mu \in \M (\lambda, \mu)$;
 \item 
 $\M (\lambda, \mu)\neq \emptyset$.
\end{enumerate}
Moreover, $|\M (\lambda, \mu)|$ is the total multiplicity of $S^\lambda \boxtimes S^\mu$  in the Koszul complex (i.e., allowing arbitrary homological degree). 
\end{lem}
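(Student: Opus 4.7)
The plan is to regard this lemma as essentially a reindexing of Lemma \ref{lem:kz_n}, together with a short poset-theoretic check.

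First I would observe that the two notations $\pair(a,b;\nu)$ and $\M(\lambda,\mu)$ describe the same incidence relation from opposite sides: by directly comparing Notation \ref{nota:pair} and Notation \ref{nota:M}, one has
\[
(\lambda,\mu) \in \pair(a,b;\nu) \quad \Longleftrightarrow \quad \nu \in \M(\lambda,\mu),
\]
assuming $\lambda \vdash b$, $\mu \vdash a$, $\nu \vdash a-n$. With this in hand, the first assertion is immediate by swapping the order of summation in the formula of Lemma \ref{lem:kz_n}: the outer sum over $\nu \vdash a-n$ and inner sum over $(\lambda,\mu) \in \pair(a,b;\nu)$ becomes an outer sum over $(\lambda,\mu)$ with $\lambda \vdash b$, $\mu \vdash a$ and inner sum over $\nu \in \M(\lambda,\mu)$ with $\nu \vdash a-n$.

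For the equivalence of the two conditions, the implication (1) $\Rightarrow$ (2) is trivial. For (2) $\Rightarrow$ (1), I would take any $\nu \in \M(\lambda,\mu)$ and verify that $\lambda \cap \mu$ satisfies both defining inequalities of $\M(\lambda,\mu)$. The conditions $\lambda \cap \mu \preceq \lambda$ and $\lambda \cap \mu \preceq \mu$ are automatic from the definition of $\cap$. For $\hs{\lambda} \preceq \lambda \cap \mu$, one needs $\hs{\lambda} \preceq \mu$, which follows from $\hs{\lambda} \preceq \nu \preceq \mu$; symmetrically, $\vstrip{\mu} \preceq \lambda \cap \mu$ follows from $\vstrip{\mu} \preceq \nu \preceq \lambda$. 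In fact this argument shows that $\lambda \cap \mu$ is the maximum of $\M(\lambda,\mu)$ whenever the latter is nonempty, though only nonemptiness is needed for the stated equivalence.

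Finally, for the multiplicity statement, I would note that for a given $\nu \in \M(\lambda,\mu)$ the corresponding summand $S^\lambda \boxtimes S^\mu$ sits in homological degree $n = a - |\nu| = |\mu/\nu|$ (see Remark \ref{rem:lambda_nu_kz}), so different choices of $\nu$ contribute copies in distinct homological degrees and summing over $n$ just counts $|\M(\lambda,\mu)|$. There is no substantive obstacle here; the only point to be careful about is the bookkeeping of the equivalence between the two indexing conventions, which is a purely formal matter given the parallel form of Notations \ref{nota:pair} and \ref{nota:M}.
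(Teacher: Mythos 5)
Your proof is correct and follows the same route as the paper, which presents Lemma \ref{lem:M_kz} simply as a reindexing of Lemma \ref{lem:kz_n} ``by construction'', exactly as you describe; the poset-theoretic checks for the equivalence of (1) and (2) and the multiplicity count are precisely what is needed and what the paper leaves implicit. One small inaccuracy: your parenthetical claim that different $\nu \in \M(\lambda,\mu)$ contribute copies in distinct homological degrees is not true in general (two distinct $\nu$ of the same size can both lie in $\M(\lambda,\mu)$), but this does not affect the conclusion, since each $\nu$ contributes exactly one summand and summing over all $n$ still yields $|\M(\lambda,\mu)|$.
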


Clearly one has:

\begin{lem}
\label{lem:chain}
\ 
\begin{enumerate}
\item 
Suppose that $\nu \in \M (\lambda, \mu)$, then $\nu \preceq (\lambda \cap \mu)$. 
\item 
Moreover, if $\nu \preceq \gamma \preceq (\lambda \cap \mu)$ and $\nu \in \M (\lambda, \mu)$, then $\gamma \in \M (\lambda, \mu)$. 
\end{enumerate}
\end{lem}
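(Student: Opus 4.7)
The statement is essentially a direct consequence of unpacking the definition of $\M(\lambda,\mu)$ together with the basic properties of the partial order $\preceq$, so the proof plan is very short.

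For the first part, the plan is to simply observe that membership $\nu \in \M(\lambda,\mu)$ imposes both $\nu \preceq \lambda$ (from the horizontal-strip condition $\hs{\lambda}\preceq \nu \preceq \lambda$) and $\nu \preceq \mu$ (from the vertical-strip condition $\vstrip{\mu}\preceq \nu \preceq \mu$). Since $\lambda \cap \mu$ is by definition the infimum of $\lambda$ and $\mu$ with respect to $\preceq$, this immediately yields $\nu \preceq \lambda \cap \mu$.

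For the second part, assuming $\nu \preceq \gamma \preceq \lambda \cap \mu$ with $\nu \in \M(\lambda, \mu)$, I will verify each of the four inequalities defining $\gamma \in \M(\lambda, \mu)$ separately. The upper bounds $\gamma \preceq \lambda$ and $\gamma \preceq \mu$ follow by transitivity from $\gamma \preceq \lambda \cap \mu$ together with $\lambda \cap \mu \preceq \lambda$ and $\lambda \cap \mu \preceq \mu$. The lower bounds $\hs{\lambda}\preceq \gamma$ and $\vstrip{\mu}\preceq \gamma$ follow by transitivity from $\hs{\lambda}\preceq \nu$, $\vstrip{\mu}\preceq \nu$ and $\nu \preceq \gamma$.

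There is no real obstacle here; the only thing worth noting is that the argument uses nothing beyond the definition of $\M(\lambda,\mu)$, the transitivity of $\preceq$, and the universal property of the infimum $\lambda \cap \mu$. In particular, the two lemmas together say that $\M(\lambda,\mu)$ is a \emph{down-set} inside the interval $[\hs{\lambda}\cup\vstrip{\mu},\,\lambda\cap\mu]$ with top element $\lambda\cap\mu$, a structural observation which will be useful in the sequel.
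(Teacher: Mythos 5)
Your proof is correct and is exactly the intended argument: the paper itself gives no proof (the lemma is prefaced simply by "Clearly one has"), and your unpacking of the definition of $\M(\lambda,\mu)$ together with the universal property of the infimum $\lambda \cap \mu$ and transitivity of $\preceq$ is precisely what "clearly" is standing in for.

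One small correction to your closing remark, though it does not affect the proof of the lemma itself: what part~(2) establishes is that $\M(\lambda,\mu)$ is an \emph{up-set} (not a down-set) inside the interval below $\lambda \cap \mu$ — from $\nu \in \M(\lambda,\mu)$ and $\nu \preceq \gamma \preceq \lambda\cap\mu$ one concludes $\gamma \in \M(\lambda,\mu)$, i.e.\ membership propagates upward. Combined with the observation that $\hs{\lambda}\cup\vstrip{\mu}$ is the least partition satisfying both lower bounds, this in fact identifies $\M(\lambda,\mu)$, when non-empty, with the full interval $\{\nu : \hs{\lambda}\cup\vstrip{\mu} \preceq \nu \preceq \lambda\cap\mu\}$, consistent with Proposition~\ref{prop:M_lambda_mu}. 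If you intend to use this structural description later, make sure to state the monotonicity in the correct direction.
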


Recall that, if $\nu' \preceq \nu$ with $|\nu'| = |\nu|-1$, then $\nu'$ is obtained from $\nu$ be removing an outer corner of the Young diagram representing $\nu$. (An outer corner of a Young diagram is a box without neighbours either to the right or below.)
 Then, if $\nu \in \M (\lambda, \mu)$, it is natural to ask under what condition $\nu'$  lies in $\M (\lambda , \mu)$. 
 
 Given an outer corner, by hypothesis this belongs both to the diagram of  $\lambda$ and that of $\mu$. Consider the boxes (potentially) to the right and below this outer corner:
 
 \begin{ytableau}
\  & r  \\
d 
\end{ytableau}. 

\noindent
Here the unlabelled box is an outer corner of $\nu$, by hypothesis,  so the boxes $\begin{ytableau} r \end{ytableau}$ and $\begin{ytableau} d \end{ytableau}$ do not belong to $\nu$; they may possibly  belong  to either the diagram $\lambda$ or to that of $\mu$, but this is not necessarily the case.

\begin{lem}
\label{lem:rd}
In the above situation, $\nu' \in \M (\lambda, \mu)$ if and only if both the following conditions are satisfied:
\begin{enumerate}
\item 
the box $\begin{ytableau} r \end{ytableau}$ does not belong to $\mu/\nu$; 
\item 
the box $\begin{ytableau} d \end{ytableau}$ does not belong to $\lambda/\nu$. 
\end{enumerate}
\end{lem}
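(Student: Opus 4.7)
The plan is to observe that $\nu'$ differs from $\nu$ by removal of a single outer corner $c$, so passing from $\nu$ to $\nu'$ adds exactly this box $c$ to each of the skew diagrams $\lambda/\nu$ and $\mu/\nu$. Since $\nu' \preceq \nu \preceq \lambda$ and $\nu' \preceq \nu \preceq \mu$ hold automatically, the only conditions left to check for membership $\nu' \in \M(\lambda, \mu)$ are that $\lambda/\nu'$ is a horizontal strip and that $\mu/\nu'$ is a vertical strip.

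For the first condition, I would argue as follows. Since $\lambda/\nu$ is already a horizontal strip by hypothesis, the enlarged skew diagram $\lambda/\nu' = (\lambda/\nu) \cup \{c\}$ fails to be a horizontal strip iff some box of $\lambda/\nu'$ other than $c$ sits in the column $v_c$ of $c$. Because $c$ is an outer corner of $\nu$, no box of $\nu$ (hence no box of $\nu' = \nu \setminus \{c\}$) lies strictly below $c$ in column $v_c$; consequently every box of $\lambda$ lying in column $v_c$ strictly below $c$ already belongs to $\lambda/\nu$. Using the downward-closedness of Young diagrams within columns, such a box exists iff the immediately-below box $d$ itself belongs to $\lambda$, equivalently iff $d \in \lambda/\nu$ (since $d \notin \nu$). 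Hence $\lambda/\nu'$ is a horizontal strip iff $d \notin \lambda/\nu$.

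The second condition is handled by the dual argument: exchanging the roles of rows and columns and replacing $\lambda$ by $\mu$, the same reasoning shows that $\mu/\nu'$ is a vertical strip iff $r \notin \mu/\nu$. Conjoining the two equivalences gives the lemma. I do not anticipate any genuine obstacle here; the statement essentially unpacks the definitions of horizontal and vertical strip together with the outer-corner property of $c$, the only care required being to exploit the downward- and rightward-closedness of Young diagrams to reduce the presence of offending boxes in the column $v_c$ (resp.\ row $u_c$) to the single box $d$ (resp.\ $r$).
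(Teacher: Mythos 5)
Your argument is correct and follows essentially the same route as the paper's (very terse) proof: passing from $\nu$ to $\nu'$ only adds the single box $c$, so one needs only check the horizontal-strip condition for $\lambda/\nu'$ and the vertical-strip condition for $\mu/\nu'$, each of which reduces by column/row-closedness and the outer-corner property to the absence of the single offending box $d$ resp.\ $r$. The paper establishes the vertical-strip criterion first and obtains the other by transposition, whereas you do the reverse, but this is immaterial; you simply fill in details the paper leaves as ``one checks.''
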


\begin{proof}
One checks that $\mu/ \nu'$ is a vertical strip if and only if $\begin{ytableau} r \end{ytableau}$ does not belong to $\mu/\nu$. The `transpose' gives the second condition.
\end{proof}

One also has:

\begin{lem}
\label{lem:outer}
If $\nu \in \M (\lambda, \mu)$, then $(\lambda\cap \mu)/\nu$ is a skew diagram consisting of outer corners of $\lambda \cap \mu$.  
\end{lem}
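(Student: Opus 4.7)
The plan is to take an arbitrary box of the skew diagram $(\lambda \cap \mu)/\nu$ and show directly that its right neighbour and bottom neighbour both lie outside $\lambda \cap \mu$, using respectively the vertical strip condition on $\mu/\nu$ and the horizontal strip condition on $\lambda/\nu$ that come from the definition of $\M(\lambda,\mu)$.

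In detail, let $(i,j)$ be a box of $(\lambda \cap \mu)/\nu$, so $(i,j) \in \lambda \cap \mu$ and $(i,j) \notin \nu$. Suppose for contradiction that the right neighbour $(i,j+1)$ lies in $\lambda \cap \mu$; in particular $(i,j+1) \in \mu$. Since $\nu$ is a partition, if $(i,j+1) \in \nu$ then also $(i,j) \in \nu$, which is false; hence $(i,j+1) \in \mu/\nu$. But then row $i$ of $\mu/\nu$ contains both $(i,j)$ and $(i,j+1)$, contradicting the hypothesis $\vstrip{\mu} \preceq \nu$, i.e., that $\mu/\nu$ is a vertical strip (Definition \ref{defn:vstrip} and Remark \ref{rem:vstrip}). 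The analogous argument applied to the bottom neighbour $(i+1,j)$, using the horizontal strip condition $\hs{\lambda} \preceq \nu$ on $\lambda/\nu$ (Lemma \ref{lem:hs}), shows that $(i+1,j) \notin \lambda \cap \mu$.

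Therefore $(i,j)$ has neither a right nor a lower neighbour in $\lambda \cap \mu$, so it is an outer corner of $\lambda \cap \mu$. Since $(i,j)$ was arbitrary, every box of $(\lambda \cap \mu)/\nu$ is an outer corner of $\lambda \cap \mu$, as claimed.

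There is no real obstacle here: the result is a direct combinatorial consequence of the partition property of $\nu$ together with the two strip conditions built into the definition of $\M(\lambda,\mu)$. One may note, as a sanity check consistent with Lemma \ref{lem:rd}, that the boxes labelled $r$ and $d$ to the right and below an outer corner of $\nu$ behave exactly as forced above.
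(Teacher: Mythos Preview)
Your proof is correct and follows essentially the same approach as the paper's. The paper reformulates ``every box of $(\lambda\cap\mu)/\nu$ is an outer corner of $\lambda\cap\mu$'' as ``$(\lambda\cap\mu)/\nu$ contains no horizontal or vertical domino'' and then derives the same contradiction from the vertical (resp.\ horizontal) strip condition on $\mu/\nu$ (resp.\ $\lambda/\nu$); you bypass that reformulation and check the outer-corner condition directly, making explicit the use of the partition property of $\nu$ to conclude $(i,j+1)\notin\nu$.
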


\begin{proof}
The condition on the skew diagram is equivalent to the following:

\begin{itemize}
\item
the skew diagram $(\lambda\cap \mu)/\nu$ does not contain a sub diagram of the form 
\begin{ytableau}
\   \\
\ 
\end{ytableau}
or 
\begin{ytableau}
\   &
\ 
\end{ytableau}.
\end{itemize}

Suppose that $(\lambda\cap \mu)/\nu$ contains $\begin{ytableau}
\   &
\ 
\end{ytableau}$, then $\mu / \nu$ does also, in particular it is not a vertical strip, 
contradicting the hypothesis $\nu \in \M (\lambda, \mu)$.

The other case is treated by the `transpose' argument.  
\end{proof}

Putting these points together, one arrives at the following concrete description of the set $\M(\lambda, \mu)$: 

\begin{prop}
\label{prop:M_lambda_mu}
Suppose that $(\lambda \cap \mu) \in \M (\lambda, \mu)$ (equivalently, that $\M (\lambda, \mu) \neq \emptyset$) and let $\nu(\lambda, \mu)\preceq (\lambda \cap \mu)$ be the diagram obtained by removing all the outer corners of $(\lambda \cap \mu)$ that satisfy the criterion of Lemma \ref{lem:rd} (taking $\nu = (\lambda \cap \mu)$).

Then $\M (\lambda , \mu ) = \{ \nu'' | \nu (\lambda ,\mu) \preceq \nu'' \preceq (\lambda \cap \mu) \}$. In particular, this has cardinal $2^{|(\lambda \cap \mu)/ \nu(\lambda, \mu)|}$. 
\end{prop}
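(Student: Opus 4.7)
The plan is to identify $\M(\lambda, \mu)$ with the interval $\{\nu'' \mid \nu(\lambda, \mu) \preceq \nu'' \preceq \lambda \cap \mu\}$ by combining the three preceding lemmas with one observation about the criterion in Lemma \ref{lem:rd}.

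First, Lemmas \ref{lem:chain} and \ref{lem:outer} together already restrict every $\nu \in \M(\lambda, \mu)$ to be of the form $(\lambda \cap \mu) \setminus S$, where $S$ is a set of outer corners of $\lambda \cap \mu$. It therefore suffices to determine which subsets $S$ yield an element of $\M(\lambda, \mu)$.

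The key observation is that the criterion of Lemma \ref{lem:rd} applied to an outer corner $c$ of $\lambda \cap \mu$ is intrinsic to the triple $(\lambda, \mu, c)$, independent of the intermediate $\nu^* \in \M(\lambda, \mu)$ to which it is applied. Indeed, if $c$ is an outer corner of $\lambda \cap \mu$, the boxes $r(c), d(c)$ lie outside $\lambda \cap \mu$, hence outside any $\nu^* \preceq \lambda \cap \mu$; therefore $r(c) \in \mu/\nu^*$ is equivalent to $r(c) \in \mu$, which, since $r(c) \notin \lambda \cap \mu$, is equivalent to $r(c) \in \mu \setminus \lambda$, and symmetrically for $d(c)$. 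Writing $C$ for the set of outer corners of $\lambda \cap \mu$ satisfying the resulting $(\lambda, \mu)$-intrinsic criterion (so that $\nu(\lambda, \mu) = (\lambda \cap \mu) \setminus C$), I would prove by induction on $|S|$ that $(\lambda \cap \mu) \setminus S \in \M(\lambda, \mu)$ if and only if $S \subseteq C$. For the forward direction, enumerate $S = \{c_1, \ldots, c_k\}$ and remove the $c_i$ successively: each $c_i$ remains an outer corner of the current partition (removing other outer corners of $\lambda \cap \mu$ creates no new neighbours of $c_i$), Lemma \ref{lem:rd} applies, and the independence observation reduces its criterion to $c_i \in C$. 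Conversely, if some $c \in S \setminus C$ exists, then $(\lambda \cap \mu) \setminus \{c\}$ lies in the interval $[(\lambda \cap \mu) \setminus S, \lambda \cap \mu]$ but fails Lemma \ref{lem:rd}, and the contrapositive of Lemma \ref{lem:chain}(2) forces $(\lambda \cap \mu) \setminus S \notin \M(\lambda, \mu)$.

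Finally, subsets of $C$ correspond bijectively to partitions $\nu''$ with $\nu(\lambda, \mu) \preceq \nu'' \preceq \lambda \cap \mu$, giving both the identification of $\M(\lambda, \mu)$ and the cardinality $2^{|C|} = 2^{|(\lambda \cap \mu)/\nu(\lambda, \mu)|}$. I do not anticipate a serious obstacle: once the independence observation is recorded, the rest is combinatorial bookkeeping, the only delicate point being to distinguish throughout between outer corners of $\lambda \cap \mu$ and those of the intermediate partitions, and to check that corners of the former type persist as corners of the latter throughout the removal process.
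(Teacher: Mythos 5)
Your proof is correct and follows essentially the same route as the paper: reduce to showing $\nu(\lambda,\mu)\in\M(\lambda,\mu)$ via Lemmas \ref{lem:chain} and \ref{lem:outer}, then establish this by iterated application of Lemma \ref{lem:rd}, using the fact that for an outer corner $c$ of $\lambda\cap\mu$ the boxes $r(c),d(c)$ lie outside $\lambda\cap\mu$, so whether they belong to $\mu/\nu^*$ (resp.\ $\lambda/\nu^*$) is independent of which intermediate $\nu^*\preceq\lambda\cap\mu$ is used. The paper compresses this ``independence'' point into the observation that removing outer corners does not affect the right and below neighbours of the remaining ones; your version spells out why this suffices (the criterion depends only on $(\lambda,\mu,c)$, not on $\nu^*$), and also makes the reverse inclusion $\M(\lambda,\mu)\subseteq\{\nu'' \mid \nu(\lambda,\mu)\preceq\nu''\preceq\lambda\cap\mu\}$ explicit via the contrapositive of Lemma \ref{lem:chain}(2), which the paper leaves to the reader.
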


\begin{proof}
Most of the statement follows directly from Lemma \ref{lem:chain} and Lemma \ref{lem:outer}. The only point that remains to be established is that $\nu (\lambda, \mu)$ belongs to $\M(\lambda,\mu)$. This is  established by using the criterion of Lemma \ref{lem:rd} and the observation that removing outer corners of $\lambda \cap \mu$ does not affect the right and below neighbours of the remaining outer corners.
\end{proof}

\subsection{Criticality}

One distinguishes the critical cases where $\M (\lambda, \mu)= \{ \lambda \cap \mu \}$.
By Lemma \ref{lem:M_kz}  (see also Proposition \ref{prop:M_lambda_mu}), for such cases  $S^\lambda \boxtimes S^\mu$ occurs with multiplicity one in the Koszul complex, in homological degree $|\mu/(\lambda \cap \mu)|$.

\begin{nota}
\label{nota:crit}
Let $\crit (a,b)$ denote the set of pairs of partitions $(\lambda \vdash b, \mu \vdash a)$ such that $\M (\lambda, \mu)= \{ \lambda \cap \mu \}$.
\end{nota}

\begin{exam}
\label{exam:crit}
The following examples given pairs $(\lambda , \mu) \in \crit (a,b)$ for various values of $a$ and $b$.  In the diagrams, $\mu/(\lambda \cap \mu)$ is represented in gray and $\lambda/ (\lambda\cap \nu)$ in light gray; $(\lambda \cap \mu)$ is in white.
Hence the homological degree of the element of $\M (\lambda , \mu)$ is  the number of dark gray boxes.

In the following, the reader should check for themselves that the outer corners of $\lambda \cap \mu$ cannot be removed, using the criterion of Lemma \ref{lem:rd}.
\begin{enumerate}
\item 
$a=1$, $b=2$, with $\lambda =(1,1)$, $\mu=(1)$:

\begin{ytableau}
\ \\
*(lightgray)
 \end{ytableau}.
\item
$a=4$, $b=3$, with $\lambda=(3)$, $\mu =(4)$:

\begin{ytableau}
\ &  \ &  \ & *(gray)
 \end{ytableau}.
\item 
$a=b=4$, with $\lambda=(2,1^2)$, $\mu= (3,1)$:

\begin{ytableau}
 \ &  \ & *(gray)\\
\ \\
*(lightgray)\\
\end{ytableau}.
\item 
$a=5$, $b=4$, with $\lambda=(2,1^2)$, $\mu= (3,2)$:

\begin{ytableau}
 \ &  \ & *(gray)\\
\ & *(gray)\\
*(lightgray)\\
\end{ytableau}.

\item 
$b=5$, $a=4$, with $\lambda= (3,2)$, $\mu=(4)$:

\begin{ytableau}
\ & \ &  \ & *(gray)\\
*(lightgray)& *(lightgray)
\end{ytableau}. 
\item 
$b=6$, $a=4$, with $\lambda= (3,3)$, $\mu=(4)$:

\begin{ytableau}
\ & \ &  \ & *(gray)\\
*(lightgray)& *(lightgray)& *(lightgray)
\end{ytableau}.
\item 
$a=b=6$, with $\lambda = (2,2,2)$, $\mu= (3,3)$:

\begin{ytableau} 
 \ &\  & *(gray) \\
 \  & \ & *(gray) \\
 *(lightgray)& *(lightgray)
\end{ytableau}.
\end{enumerate}
\end{exam}

As a warm-up for describing $\crit (a,b)$ in general, consider the critical pairs for which $\mu \preceq \lambda$, which corresponds to homological degree zero. (This should be compared with the results of Section \ref{sect:perm}.)

\begin{lem}
\label{lem:crit_max_horizontal}
Suppose that $\mu \preceq \lambda$. Then $(\lambda, \mu ) \in \crit (a,b)$ if and only if $\mu = \hs{\lambda} $.
\end{lem}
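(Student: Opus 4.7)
The plan is to apply Proposition~\ref{prop:M_lambda_mu} to the case at hand, where $\lambda \cap \mu = \mu$, and read off when the interval $\M(\lambda, \mu)$ collapses to the single element $\mu$. Under the hypothesis $\mu \preceq \lambda$, criticality means exactly that $\mu \in \M(\lambda,\mu)$ and that no proper subpartition of $\mu$ belongs to $\M(\lambda,\mu)$.

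First I would observe that $\mu \in \M(\lambda,\mu)$ is equivalent to $\lambda/\mu$ being a horizontal strip (the vertical strip condition on $\mu/\mu = \emptyset$ is automatic), i.e., to $\hs{\lambda} \preceq \mu$, by Lemma~\ref{lem:hs}. If this fails then $\M(\lambda,\mu) = \emptyset$ and the pair is trivially not critical; in particular, in proving either direction I may henceforth assume $\hs{\lambda} \preceq \mu$.

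Next, I would apply Lemma~\ref{lem:rd} with $\nu = \mu$ to each outer corner $(i, \mu_i)$ of $\mu$ (so $\mu_{i+1} < \mu_i$). The right-hand neighbour condition is vacuous since $\mu/\mu = \emptyset$; the lower neighbour at $(i+1, \mu_i)$ lies outside $\mu$, so the condition that it not lie in $\lambda/\mu$ reduces to $\lambda_{i+1} < \mu_i$. Thus the outer corner at $(i, \mu_i)$ can be removed, producing a strictly smaller element of $\M(\lambda,\mu)$, precisely when $\lambda_{i+1} < \mu_i$. Hence $(\lambda, \mu)$ is critical if and only if $\hs{\lambda}\preceq \mu$ and $\lambda_{i+1} \geq \mu_i$ for every row $i$ at which $\mu$ has an outer corner.

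Finally, I would match this combinatorial condition with the equality $\mu = \hs{\lambda}$. If $\mu = \hs{\lambda}$, then $\mu_i = \lambda_{i+1}$ for every $i$, so $\lambda_{i+1} \geq \mu_i$ holds trivially and the pair is critical. Conversely, if $\hs{\lambda} \preceq \mu$ with $\mu \neq \hs{\lambda}$, choose $i$ with $\mu_i > \lambda_{i+1}$; the inequality $\mu \preceq \lambda$ forces $\mu_{i+1} \leq \lambda_{i+1} < \mu_i$, so $(i, \mu_i)$ is indeed an outer corner of $\mu$ and the removability condition $\lambda_{i+1} < \mu_i$ is satisfied, yielding a proper subpartition in $\M(\lambda,\mu)$ and thus non-criticality. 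I do not expect a real obstacle here: the lemma is a direct bookkeeping consequence of Proposition~\ref{prop:M_lambda_mu} combined with Lemma~\ref{lem:rd}, the only subtlety being to remember the automatic satisfaction of the right-neighbour condition when $\nu = \lambda \cap \mu = \mu$.
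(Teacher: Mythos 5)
Your proof is correct and follows essentially the same strategy as the paper's: reduce to checking whether an outer corner of $\mu = \lambda \cap \mu$ can be removed while staying inside $\M(\lambda,\mu)$, and translate that via the condition $\hs{\lambda}\preceq\mu$. The paper's proof leaves the outer-corner removability check as "one checks"; you make it explicit by running Lemma~\ref{lem:rd} (noting that with $\nu=\mu$ the right-neighbour condition is vacuous, so removability at row $i$ is exactly $\lambda_{i+1}<\mu_i$) and by invoking Proposition~\ref{prop:M_lambda_mu} to see that it suffices to test one-box removals. This is the same argument, just spelled out more carefully.
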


\begin{proof}
Since $\mu \preceq \lambda$ by hypothesis, $\mu = (\lambda \cap \mu)$. 

Suppose that $\M(\lambda, \mu)$ is non-empty, so that $\mu = (\lambda \cap \mu) \in \M(\lambda, \mu)$.  Since $\lambda /\mu$ is a horizontal strip, one has $\hs{\lambda} \preceq \mu$. Suppose that this is not an equality, then there exists $\mu'$ obtained by removing an outer corner from $\mu$ such that $\hs{\lambda}\preceq \mu' \preceq \mu$. One checks that $\mu' \in \M (\lambda, \mu)$, hence $(\lambda, \mu) \not \in \crit (a,b)$, a contradiction. 

Conversely, if $\mu = \hs {\lambda}$, one checks easily that $(\lambda, \mu) \in \crit (a,b)$.
\end{proof}

\begin{prop}
\label{prop:crit}
The set $\crit (a,b)$ is in bijection with the set of pairs of partitions $(\gamma, \delta)$ that satisfy the following conditions:
\begin{enumerate}
\item 
$\hs {\gamma} \cap \vstrip{\delta} =\gamma \cap \delta  $; 
\item 
$|\gamma \cup \vstrip{\delta}|  = b$; 
\item 
$|\hs{\gamma} \cup \delta|  = a$.
\end{enumerate}

The bijection is defined by setting:
\begin{eqnarray*}
\lambda &:=& \gamma \cup \vstrip{\delta} \\
\mu &:=& \hs{\gamma} \cup \delta.
\end{eqnarray*}
In particular, $\mu \preceq \lambda$ if and only if $\delta =(0)$ and $\lambda \preceq \mu$ if and only if $\gamma =(0)$.
\end{prop}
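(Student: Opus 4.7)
The plan is to make the claimed bijection fully explicit in both directions. The starting point is to unpack Condition~1 coordinate-wise. The equality $\min(\gamma_{i+1}, \vstrip{\delta}_i) = \min(\gamma_i, \delta_i)$ is vacuous when $\delta_i = 0$, and a short case analysis for $\delta_i > 0$ shows it to be equivalent to the conjunction $\gamma_i < \delta_i$ and $\gamma_{i+1} = \gamma_i$. Letting $k-1$ denote the number of positive parts of $\delta$, this forces $\gamma$ to be constant, say equal to $c$, on rows $1, \ldots, k$ with $c \leq \delta_{k-1}-1$, while the tail $\gamma_{k+1} \geq \gamma_{k+2} \geq \cdots$ is an arbitrary partition bounded above by $c$.

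With this parametrisation in hand, one reads off explicit formulas: $\lambda_i = \delta_i - 1$ and $\mu_i = \delta_i$ for $i < k$; $\lambda_k = c$, $\mu_k = \gamma_{k+1}$; and $\lambda_j = \gamma_j = \mu_{j-1}$ for $j > k$. Consequently $(\lambda \cap \mu)_i = \delta_i - 1$ for $i < k$ and $(\lambda \cap \mu)_j = \gamma_{j+1}$ for $j \geq k$. The inclusions $\hs{\lambda} \preceq \lambda \cap \mu$ and $\vstrip{\mu} \preceq \lambda \cap \mu$ then follow by direct comparison, and Conditions~2 and~3 supply $|\lambda|=b$, $|\mu|=a$. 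Criticality is checked via Lemma~\ref{lem:rd}: for any outer corner at $(i, \delta_i - 1)$ with $i < k$, the right-neighbour $(i, \delta_i)$ lies in the vertical strip $\mu/(\lambda \cap \mu)$; for any outer corner at $(j, \gamma_{j+1})$ with $j \geq k$, the down-neighbour $(j+1, \gamma_{j+1})$ lies in the horizontal strip $\lambda/(\lambda \cap \mu)$. In either case the corner cannot be removed, so $(\lambda, \mu) \in \crit(a, b)$.

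For the inverse map, given $(\lambda, \mu) \in \crit(a, b)$, I would first establish two structural rigidities. Let $k$ be the smallest index with $\mu_k \leq \lambda_{k+1}$; since $\hs{\lambda} \preceq \lambda \cap \mu \preceq \mu$ one has $\mu_k = \lambda_{k+1}$. An application of Lemma~\ref{lem:rd} to the outer corner of $\lambda \cap \mu$ lying in row $i$ (for $i < k$) shows further that $\mu_i = \lambda_i + 1$ for all $i<k$; without this, the outer corner $(i,\mu_i)$ would be freely removable, contradicting criticality. One then defines $\delta_i := \mu_i$ for $i<k$, $\delta_i := 0$ otherwise, and $\gamma_j := \lambda_k$ for $j\leq k$, $\gamma_j := \lambda_j$ for $j>k$. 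A direct coordinate-wise check confirms that $(\gamma, \delta)$ satisfies Conditions~1--3 and that the two maps are mutually inverse. The final clauses then fall out: $\delta = (0)$ forces $k = 1$, whence $\mu = \hs{\gamma} = \hs{\lambda}$ (compatibly with Lemma~\ref{lem:crit_max_horizontal}), and the `transpose' argument treats $\gamma = (0) \Leftrightarrow \lambda \preceq \mu$.

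The main obstacle is isolating the two rigidity statements: the implication `$\delta_i > 0 \Rightarrow \gamma_i < \delta_i$ and $\gamma_{i+1} = \gamma_i$' coming from Condition~1 on the $(\gamma,\delta)$ side, and the structural fact $\mu_i = \lambda_i + 1$ for $i < k$ on the critical-pair side. Once both are in place, the explicit formulas are forced, and the remaining verifications reduce to careful bookkeeping.
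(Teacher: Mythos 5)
Your approach is a genuinely different route from the paper's. The paper argues pictorially (the hook configuration and the colouring from Example~\ref{exam:crit}, with the key observation that the boxes of $\lambda/(\lambda\cap\mu)$ must all lie to the left and below those of $\mu/(\lambda\cap\mu)$), while you work fully coordinate-wise. Your unpacking of Condition~1 into ``$\delta_i > 0 \Rightarrow \gamma_i < \delta_i$ and $\gamma_{i+1} = \gamma_i$'' is correct and yields the same explicit formulas for $\lambda$ and $\mu$, and your forward criticality check via Lemma~\ref{lem:rd} is sound and fills in details the paper leaves to the reader.

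There is, however, one genuine gap in the inverse direction. You establish two rigidities: $\mu_k = \lambda_{k+1}$ and $\mu_i = \lambda_i + 1$ for $i < k$. But reconstructing $\mu$ from your $(\gamma,\delta)$ requires $\mu_j = \gamma_{j+1} = \lambda_{j+1}$ for all $j \geq k$, and this is \emph{not} forced by those two rigidities together with $\hs{\lambda}\preceq\mu\preceq(1)+\lambda$. Without criticality, for $j > k$ one only knows $\lambda_{j+1}\leq\mu_j\leq\mu_{j-1}$, which leaves room for $\mu_j>\lambda_{j+1}$. A third rigidity is needed: one argues by increasing induction on $j\geq k$ that $\mu_j=\lambda_{j+1}$. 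Inductively $\mu_j\leq\mu_{j-1}=\lambda_j$, so $(\lambda\cap\mu)_j=\mu_j$; if $\mu_j>\lambda_{j+1}$ then $(j,\mu_j)$ is an outer corner of $\lambda\cap\mu$ whose right-neighbour is outside $\mu$ and whose down-neighbour is outside $\lambda$, so by Lemma~\ref{lem:rd} it could be removed, contradicting criticality. This is the mirror-image of your argument for $i<k$ (down-neighbour in place of right-neighbour, mirroring the transpose/duality of Corollary~\ref{cor:dagger}), so it fits your framework, but it is a separate step that criticality must supply --- it is not mere bookkeeping.
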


\begin{proof}
One first checks that, given a pair $(\gamma, \delta)$, the associated $(\lambda, \mu)$ lies in $\crit(a,b)$. The case $\delta =(0)$ corresponds to Lemma \ref{lem:crit_max_horizontal} and the case $\gamma =(0)$ is proved by the `transpose' argument. 

Hence suppose that both $\gamma$ and $\delta$ are non-trivial. Consider the case where $\delta$ has one row and $\gamma$ one column. The condition $\gamma \cap \delta \preceq \hs {\gamma} \cap \vstrip{\delta} $ implies that $\gamma \cup \delta$ has the form of a hook:

\begin{ytableau}
 \ &  \ & *(gray)\\
\ \\
\ \\
*(lightgray)\\
\end{ytableau}

\noindent
and this corresponds to a critical pair, by the criterion of Lemma  \ref{lem:rd}. (Note that the light gray box, corresponding to $\lambda/(\lambda \cap \mu)$ is to the left and below the dark gray box, which corresponds to $\mu / (\lambda \cap \mu)$.)

The general case follows by applying the argument of Lemma \ref{lem:crit_max_horizontal} (and its transpose) to the parts of the diagram arising from $\gamma$ and $\delta$ respectively. 

To show that this defines a bijection, given $\lambda$ and $\mu$ representing an element of $\crit(a, b)$, we require to exhibit the appropriate $\gamma $ and $\delta$. The key observation is that (again using the colouring of Example \ref{exam:crit}), all light gray boxes must lie to the left and below the dark gray boxes, as explained below. 

Suppose otherwise, then the edge of $\lambda \cup \mu$ would contain a sub-hook of the form

\begin{ytableau}
 \ & \ & \ & *(lightgray)\\
\ \\
*(gray)\\
\end{ytableau}.

\noindent
By the criterion of Lemma \ref{lem:rd}, one derives a contradiction to criticality (both white outer corners can be removed - and this applies even in the case \begin{ytableau}
 \ & *(lightgray)\\
*(gray)\\
\end{ytableau}). 

Hence define $\delta \leq \mu$ to be the smallest partition of the form $(\mu_1,  \ldots , \mu_s)$ with Young diagram containing all the dark gray boxes; $\gamma \preceq \lambda$ is defined likewise using the transpose construction (thus columns) for the light gray boxes. It remains to check that $(\gamma, \delta)$ satisfies the given conditions. This is left as an exercice for the reader. 
\end{proof}

There is a pleasing symmetry in the above construction, using the passage to the transpose partition $\lambda \mapsto \lambda^\dagger$. 

\begin{cor}
\label{cor:dagger}
There is a bijection $\crit(a,b) \cong  \crit (b,a)$ given by $(\lambda, \mu)  \mapsto (\mu^\dagger, \lambda^\dagger)$. 

Under the bijection of Proposition \ref{prop:crit}, this corresponds to 
$(\gamma, \delta) \mapsto (\delta^\dagger, \gamma ^\dagger)$. 
\end{cor}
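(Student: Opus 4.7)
The plan is to verify that the map $(\lambda,\mu)\mapsto (\mu^\dagger,\lambda^\dagger)$ is a well-defined involutive bijection by showing it comes from a transpose-compatible bijection at the level of the sets $\M(\lambda,\mu)$, and then to compare with the parametrization of Proposition~\ref{prop:crit}.

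First I would record the elementary identities governing transpose. Transposing partitions preserves the order $\preceq$ (it is simply reflecting Young diagrams in the diagonal), it commutes with intersection and union (so $(\lambda\cap\mu)^\dagger = \lambda^\dagger\cap\mu^\dagger$ and similarly for $\cup$), and it interchanges rows and columns, so that Remark~\ref{rem:vstrip} gives the equivalence between horizontal strips for $\lambda/\mu$ and vertical strips for $\lambda^\dagger/\mu^\dagger$. In particular, forgetting the first row and forgetting the first column are exchanged by transpose: $\hs{\lambda}^\dagger = \vstrip{\lambda^\dagger}$ and $\vstrip{\lambda}^\dagger = \hs{\lambda^\dagger}$.

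Next I would show that the assignment $\nu\mapsto \nu^\dagger$ gives a bijection $\M(\lambda,\mu)\cong \M(\mu^\dagger,\lambda^\dagger)$. By Notation~\ref{nota:M}, an element $\nu\in \M(\lambda,\mu)$ is characterised by $\hs{\lambda}\preceq\nu\preceq\lambda$ and $\vstrip{\mu}\preceq\nu\preceq\mu$. Applying transpose and using the identities above, this is equivalent to $\vstrip{\lambda^\dagger}\preceq \nu^\dagger\preceq\lambda^\dagger$ and $\hs{\mu^\dagger}\preceq\nu^\dagger\preceq\mu^\dagger$, i.e., to $\nu^\dagger\in \M(\mu^\dagger,\lambda^\dagger)$ (with the roles of the `horizontal' and `vertical' conditions now referring to the pair $(\mu^\dagger,\lambda^\dagger)$ in that order). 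Since $(\lambda\cap\mu)^\dagger = \mu^\dagger\cap\lambda^\dagger$, the bijection moreover sends $\lambda\cap\mu$ to the distinguished element of $\M(\mu^\dagger,\lambda^\dagger)$. Thus $\M(\lambda,\mu)=\{\lambda\cap\mu\}$ if and only if $\M(\mu^\dagger,\lambda^\dagger)=\{\mu^\dagger\cap\lambda^\dagger\}$, giving the bijection $\crit(a,b)\cong\crit(b,a)$; involutivity is immediate from $(\lambda^\dagger)^\dagger=\lambda$.

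For the statement about the parametrization, I would simply substitute into the formulas of Proposition~\ref{prop:crit}. Using $\lambda=\gamma\cup\vstrip{\delta}$ and $\mu=\hs{\gamma}\cup\delta$, transposing and applying the identities $\vstrip{\delta}^\dagger=\hs{\delta^\dagger}$ and $\hs{\gamma}^\dagger=\vstrip{\gamma^\dagger}$ yields
\[
\mu^\dagger = \delta^\dagger\cup \vstrip{\gamma^\dagger},\qquad
\lambda^\dagger = \hs{\delta^\dagger}\cup \gamma^\dagger,
\]
which is precisely the parametrization of $(\mu^\dagger,\lambda^\dagger)\in\crit(b,a)$ with data $(\gamma',\delta')=(\delta^\dagger,\gamma^\dagger)$. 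It remains to verify that this pair indeed satisfies the three conditions of Proposition~\ref{prop:crit} for $\crit(b,a)$; these follow directly by transposing the three corresponding conditions satisfied by $(\gamma,\delta)$, using once more that transpose commutes with $\cap$ and $\cup$ and swaps $\hs{-}$ with $\vstrip{-}$.

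There is no substantial obstacle: the content of the corollary is the observation that transpose is symmetric in the two defining conditions of $\M(\lambda,\mu)$, which swap under $(\lambda,\mu)\leftrightarrow(\mu^\dagger,\lambda^\dagger)$. The only thing to be careful about is bookkeeping the sizes ($\lambda^\dagger\vdash b$, $\mu^\dagger\vdash a$) so that one lands in $\crit(b,a)$ rather than $\crit(a,b)$.
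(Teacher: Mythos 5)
Your proposal is correct and matches the approach the paper has in mind; the paper in fact omits the proof, relying on the preceding remark that ``there is a pleasing symmetry in the above construction, using the passage to the transpose partition,'' so you have simply spelled out the bookkeeping the paper treats as evident. The key identities you use — that $\preceq$, $\cap$, $\cup$ commute with $^\dagger$ and that $\hs{(-)}$ and $\vstrip{(-)}$ swap under transposition (a direct consequence of Remark~\ref{rem:vstrip}) — are exactly the content being invoked, and your check that $\nu\mapsto\nu^\dagger$ carries $\M(\lambda,\mu)$ bijectively onto $\M(\mu^\dagger,\lambda^\dagger)$, preserving the distinguished element $\lambda\cap\mu\mapsto\mu^\dagger\cap\lambda^\dagger$, together with the size bookkeeping, yields the bijection $\crit(a,b)\cong\crit(b,a)$; the translation to the $(\gamma,\delta)$ parametrization is likewise accurate.
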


\subsection{Homological consequences}

Recall that the Koszul complex $\kz _\bullet \kring \hfi (-, \mathbf{b}) \big)(\mathbf{a})$ is zero in homological degree $>a$ and each of the terms is a direct sum of finitely many simple $\sym_b \times \sym_a$-modules. This allows one to reason in terms of the Grothendieck group of such modules and to consider the Euler-Poincaré characteristic of the complex, which coincides with that of its homology. 

\begin{nota}
The class of a finite $\sym_b \times \sym_a $-module $X$ in the Grothendieck group is denoted by $[X]$.
\end{nota}

Using the notion of criticality, one deduces the following:

\begin{prop}
\label{prop:subset_H_finj}
For $a,b \in \nat$ and homological degree $n$, there is an inclusion of $\sym_b \times \sym_a $-modules:
\[
\bigoplus_{\substack{(\lambda, \mu) \in \crit (a,b) \\
|\mu/(\lambda \cap \mu)|=n }}
S^\lambda \boxtimes S^\mu 
\subset 
H_n ^\finj (\kring \hfi(-, \mathbf{b}) ) (\mathbf{a}).
\]
For $n=0$ this is an isomorphism.

Moreover, there is an equality in the Grothendieck group of $\sym_b \times \sym_a $-modules
\[
\sum_{(\lambda, \mu) \in \crit (a,b)}
(-1)^{|\mu/(\lambda \cap \mu)|}[S^\lambda \boxtimes S^\mu] 
= 
\sum_{n \in \nat} (-1)^n [H_n ^\finj (\kring \hfi(-, \mathbf{b}) ) (\mathbf{a})].
\]
\end{prop}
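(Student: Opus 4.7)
The plan rests on two observations: Lemma \ref{lem:M_kz} gives the composition factors of each term of the Koszul complex $\kz_\bullet \kring\hfi(-,\mathbf{b})(\mathbf{a})$, and by definition a pair $(\lambda,\mu)\in\crit(a,b)$ is exactly one for which $S^\lambda\boxtimes S^\mu$ occurs with total multiplicity one across the whole complex, necessarily concentrated in homological degree $n=|\mu/(\lambda\cap\mu)|$.

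To establish the inclusion, I would exploit the fact that the differentials of $\kz_\bullet$ are $\sym_b\times\sym_a$-equivariant and that the ambient category of representations is semisimple. For $(\lambda,\mu)\in\crit(a,b)$ and $n=|\mu/(\lambda\cap\mu)|$, the isotype $S^\lambda\boxtimes S^\mu$ is absent from both $\kz_{n-1}$ and $\kz_{n+1}$, so Schur's lemma forces both the restriction of $d_n$ to the unique copy of this isotype in $\kz_n$ and the projection of $d_{n+1}$ onto it to vanish. Hence this copy descends faithfully to $H_n$, yielding the asserted inclusion after summing over all critical pairs of degree $n$.

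For the isomorphism in degree zero, I would invoke Proposition \ref{prop:coker_H0} to identify $H_0^\finj\kring\hfi(-,\mathbf{b})(\mathbf{a})$ with $\mathrm{Coker}\,\tr_{a,b}$. Lemma \ref{lem:crit_max_horizontal} ensures that a critical pair $(\lambda,\mu)$ with $|\mu/(\lambda\cap\mu)|=0$ (equivalently $\mu\preceq\lambda$) satisfies $\mu=\hs{\lambda}$ and $\lambda_1=b-a$, so the left-hand side in degree $0$ reduces to $\bigoplus_{\lambda\vdash b,\,\lambda_1=b-a} S^\lambda\boxtimes S^{\hs{\lambda}}$. This is precisely the cokernel computed in Theorem \ref{thm:coker}, so the inclusion is an equality.

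For the Euler characteristic identity, I would work in the Grothendieck group of $\sym_b\times\sym_a$-modules. Using Lemma \ref{lem:M_kz}, the class of the Koszul complex is
\[
\sum_n (-1)^n [\kz_n(\mathbf{a})] = \sum_{\lambda\vdash b,\,\mu\vdash a}\Bigl(\sum_{\nu\in\M(\lambda,\mu)} (-1)^{|\mu/\nu|}\Bigr)[S^\lambda\boxtimes S^\mu].
\]
By Proposition \ref{prop:M_lambda_mu}, when $\M(\lambda,\mu)$ is non-empty it is the Boolean interval from $\nu(\lambda,\mu)$ to $\lambda\cap\mu$; writing $k=|(\lambda\cap\mu)/\nu(\lambda,\mu)|$ and grouping by the number of corners removed, the inner sum becomes $(-1)^{|\mu/(\lambda\cap\mu)|}(1-1)^k$, which vanishes unless $k=0$, i.e., unless $(\lambda,\mu)\in\crit(a,b)$. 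Since $\kz_\bullet(\mathbf{a})$ is bounded (concentrated in degrees $0 \leq n \leq a$) and each term has finite length, its Euler characteristic coincides with that of its homology, yielding the stated identity. The main obstacle is essentially cosmetic: Proposition \ref{prop:M_lambda_mu} has already done the real combinatorial work, so the remainder is a routine application of Schur's lemma, Theorem \ref{thm:coker}, and the vanishing of the alternating sum over a nondegenerate Boolean cube.
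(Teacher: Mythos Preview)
Your proposal is correct and follows essentially the same route as the paper's own proof: multiplicity one in the Koszul complex for critical pairs (via Lemma~\ref{lem:M_kz}) gives the inclusion, Lemma~\ref{lem:crit_max_horizontal} together with Theorem~\ref{thm:coker} handles $n=0$, and the Boolean-interval description of $\M(\lambda,\mu)$ from Proposition~\ref{prop:M_lambda_mu} makes the non-critical contributions cancel in the Euler characteristic. If anything, your version is slightly more explicit (spelling out the Schur's-lemma step and the $(1-1)^k$ identity) than the paper's.
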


\begin{proof} By Lemma \ref{lem:M_kz}, for $(\lambda, \mu) \in \crit (a,b)$,  $S^\lambda \boxtimes S^\mu$ occurs with total multiplicity one in the Koszul complex calculating the $\finj$-homology $  H_*^\finj (\kring \hfi(-, \mathbf{b}) ) (\mathbf{a})$ and  this factor is in the given homological degree. This gives the inclusion. 

The case $n=0$ follows from Lemma \ref{lem:crit_max_horizontal} and Theorem \ref{thm:coker}.

The final statement follows by analysing Proposition \ref{prop:M_lambda_mu}, which gives
$$
\M (\lambda , \mu ) = \{ \nu'' | \nu (\lambda ,\mu) \preceq \nu'' \preceq (\lambda \cap \mu) \},
$$
where $\nu''$ represents an element in homological degree $|\mu / \nu''| = 
|\mu / (\lambda \cap \mu) | - |(\lambda \cap \mu) / \nu''|$. 

Now $(\lambda, \mu)$ is critical if and only if $|(\lambda \cap \mu)/ \nu(\lambda, \mu)|>0$. If $(\lambda, \mu)$ is not critical, then $2^{|(\lambda \cap \mu)/ \nu(\lambda, \mu)|}$ is even and one checks that the occurrences of $S^\lambda \boxtimes S^\mu$  in the Koszul complex sum to zero on forming the Euler-Poincaré characteristic. Since this is equal to the Euler-Poincaré characteristic of the homology, the result follows.
\end{proof}

On the basis of Proposition \ref{prop:subset_H_finj}, one can optimistically conjecture the following:

\begin{conj}
For $a,b \in \nat$ and homological degree $n$, there is an isomorphism of $\sym_b \times \sym_a $-modules:
\[
H_n ^\finj (\kring \hfi(-, \mathbf{b}) ) (\mathbf{a})
\cong
\bigoplus_{\substack{(\lambda, \mu) \in \crit (a,b) \\
|\mu/(\lambda \cap \mu)|=n }}
S^\lambda \boxtimes S^\mu. 
\]
\end{conj}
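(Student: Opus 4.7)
The plan is to strengthen Proposition \ref{prop:subset_H_finj} from its current inclusion (complemented by an Euler-Poincar\'{e} identity in the Grothendieck group) to an actual isomorphism. Concretely, I aim to show that, for every non-critical pair $(\lambda,\mu)$, the $(S^\lambda \boxtimes S^\mu)$-isotypic component of the Koszul complex $\kz_\bullet \kring \hfi(-,\mathbf{b})\trans(\mathbf{a})$ is acyclic. Since each critical pair $(\lambda,\mu)$ contributes precisely the stated summand $S^\lambda \boxtimes S^\mu$ in degree $|\mu/(\lambda\cap\mu)|$ --- the set $\M(\lambda,\mu)$ reduces to the single point $\{\lambda \cap \mu\}$ by Proposition \ref{prop:M_lambda_mu} --- acyclicity for non-critical pairs would yield the conjecture.

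Setting up the cube: by $\sym_b \times \sym_a$-equivariance, the Koszul differential preserves isotypic components. By Lemma \ref{lem:M_kz} together with Proposition \ref{prop:M_lambda_mu}, the $(S^\lambda \boxtimes S^\mu)$-isotypic subcomplex has a basis of multiplicities indexed by the Boolean lattice $\M(\lambda,\mu) \cong 2^R$, where $R$ is the set of removable outer corners of $\lambda\cap\mu$, of cardinality $k = |(\lambda\cap\mu)/\nu(\lambda,\mu)|$. A subset $S \subseteq R$ corresponds to $\nu(\lambda,\mu) \cup S \in \M(\lambda,\mu)$ lying in homological degree $|\mu| - |\nu(\lambda,\mu)| - |S|$, so an edge of the Koszul differential goes from $S$ to $S \cup \{i\}$ for some $i \in R \setminus S$. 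Non-criticality is precisely the condition $k \geq 1$. The key claim is that this cube of one-dimensional spaces is isomorphic to the tensor product, over $i \in R$, of $k$ acyclic two-term complexes $\kring \xrightarrow{c_i} \kring$ with each $c_i \neq 0$, hence is acyclic.

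The plan for establishing this factorisation is to invoke Theorem \ref{thm:schur_koszul} of the appendix, which converts the Koszul complex into an explicit complex of Schur bifunctors. Under this conversion, each edge of $\M(\lambda,\mu)$ should be realised as a canonical Pieri-type morphism between the Schur modules associated to $\nu(\lambda,\mu) \cup S$ and $\nu(\lambda,\mu) \cup S \cup \{i\}$; semisimplicity together with Pieri's rule then rules out vanishing of any such morphism. The super-commutativity of the cube should follow from the independence of toggles at distinct elements of $R$, since removable corners of $\lambda\cap\mu$ are by the criterion of Lemma \ref{lem:rd} mutually non-adjacent in a precise sense, so the operations at distinct $i, j \in R$ genuinely act on disjoint parts of the Young diagram.

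The main obstacle will be verifying non-vanishing together with full sign-compatibility along every edge of the cube. This is the higher-degree analogue of the technical core of Section \ref{sect:calc}, where non-vanishing of a single scalar $\coeff(\lambda,\nu,\kappa)$ already required an extended computation in the Young orthonormal basis; the inductive step in the proof of Proposition \ref{prop:inductive_theta} illustrates the kind of edge-by-edge reduction one should expect. The Schur-bifunctor reformulation of the appendix should remove the need for an orthonormal basis and permit more intrinsic arguments, but some residual combinatorics is likely to remain, especially in configurations where two removable corners of $\lambda \cap \mu$ are close together and the associated Pieri morphisms interact non-trivially; this is where a careful treatment, presumably refining the additivity of axial distances (Lemma \ref{lem:axial_additivity}) used in Section \ref{subsect:proof}, will be required.
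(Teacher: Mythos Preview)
The statement is presented in the paper as a \emph{conjecture}; no proof is given. The paper only sketches a possible strategy in the subsection immediately following it, based on the short exact sequence of Proposition~\ref{prop:dévissage} and induction on $b$, and explicitly remarks that carrying this out ``requires non-trivial input which should be expected to be as difficult as proving Theorem~\ref{thm:coker}.''

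Your strategy is genuinely different from the one the paper outlines. You attack the Koszul complex directly: for each non-critical pair $(\lambda,\mu)$ you identify the isotypic subcomplex with a cube indexed by $\M(\lambda,\mu)\cong 2^R$ and aim to show it is acyclic. The paper instead proposes to restrict along $\sym_{b-1}\subset\sym_b$, use the d\'evissage of Proposition~\ref{prop:dévissage}, and control the connecting morphisms of the resulting long exact sequence inductively. Your approach is more intrinsic and dovetails with the Schur-bifunctor description of the appendix; the paper's approach stays closer to the inductive shape of the $n=0$ argument.

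Two remarks on your plan. First, you do not need the full tensor-product factorisation of the cube, nor any separate ``sign-compatibility'' verification: $d^2=0$ on the isotypic subcomplex is inherited from the ambient Koszul complex, and nonvanishing of all edges in any \emph{single} direction $i\in R$ already exhibits the cube as the mapping cone of a degree-wise isomorphism of $(k{-}1)$-cubes, hence acyclic. Second, the sentence ``semisimplicity together with Pieri's rule then rules out vanishing of any such morphism'' is precisely where the gap lies. Each edge of the cube is a \emph{composite} of Pieri-type inclusions/projections with the $\finj$-transfer, and such composites can vanish even when every factor is nonzero; indeed, the entire content of Section~\ref{sect:calc} is that one particular such composite does not vanish in homological degree zero. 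Your edge-nonvanishing claim is the higher-degree analogue of that computation, and neither your outline nor the paper's supplies a mechanism for establishing it. In short: your reduction is sound and arguably cleaner than the paper's proposed route, but both leave the same essential difficulty open.
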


\subsection{A possible approach}

We outline in this subsection a possible strategy for attacking the conjecture, based on the {\em dévissage} provided by Proposition \ref{prop:dévissage} below.

Write $\kring \langle 1 \rangle $ for the $\finj$-module that is zero on $\mathbf{n}$ unless $n=1$, when it takes value $\kring$ (with the only possible action of $\sym_1$).
 Then the following is standard:
 
 \begin{lem}
 For $F \in \ob \fimod$, $\kring \langle 1 \rangle \conv F $ carries a canonical $\finj$-module structure induced by that of $F$. 
 \end{lem}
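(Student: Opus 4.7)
The plan is to exploit that $\kring\langle 1 \rangle$ is concentrated on singletons to reduce to a transparent formula, and then verify $\finj$-functoriality directly from that of $F$. On a finite set $X$, the convolution unpacks as
\[
(\kring \langle 1 \rangle \conv F)(X) \;\cong\; \bigoplus_{x \in X} F(X \setminus \{x\}),
\]
since the only decompositions $X = X_1 \sqcup X_2$ contributing are those with $X_1 = \{x\}$.

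Given an injection $f : X \hookrightarrow Y$, I will define the induced morphism summand-by-summand: on the $x$-summand $F(X \setminus \{x\})$, apply $F$ to the restricted injection $f|_{X \setminus \{x\}} : X \setminus \{x\} \hookrightarrow Y \setminus \{f(x)\}$, and land in the $f(x)$-summand of the target; summands indexed by $y \in Y \setminus f(X)$ are not hit from the $x$-summand. When $f$ is a bijection in $\fb$, this recovers the permutation of summands prescribed by the $\fb$-module convolution, so the construction extends the given $\fb$-structure.

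Functoriality with respect to composition is then immediate from that of $F$ on $\finj$: for a further injection $g : Y \hookrightarrow Z$, the two-step action on the $x$-summand is
\[
F\bigl(g|_{Y \setminus \{f(x)\}}\bigr) \circ F\bigl(f|_{X \setminus \{x\}}\bigr)
\;=\;
F\bigl((g \circ f)|_{X \setminus \{x\}}\bigr),
\]
landing in the $g(f(x))$-summand, which agrees with the one-step action of $g \circ f$. Naturality in $F$ is automatic since every ingredient is built from $F$ alone, giving the ``canonicity'' claimed in the statement.

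The only real point to get right is the formula itself. A naive alternative that sends the $x$-summand to every summand $y \in Y \setminus f(X \setminus \{x\})$ that can receive $f(X \setminus \{x\})$ fails to be functorial: a composition check shows an overcounting proportional to $|Y \setminus f(X)|$ in the images produced by $g \circ f$ versus those produced by applying $f$ then $g$. The correct rule --- sending the $x$-summand only to the $f(x)$-summand --- is pinned down conceptually by regarding $\kring \langle 1\rangle$ itself as an $\finj$-module by extension by zero: between sets of size one, its only non-zero value is $\kring \langle 1 \rangle(\mathbf{1})$ with trivial morphism action, which forces $Y_1 = \{f(x)\}$ in any $\finj$-convolution decomposition of $Y$. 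There is thus no substantive obstacle; the lemma is a direct unwinding of the definitions.
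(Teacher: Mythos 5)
Your proof is correct and takes essentially the same approach as the paper: identify $(\kring\langle 1\rangle \conv F)(X)$ with $\bigoplus_{x\in X}F(X\setminus\{x\})$ and, for an injection $f$, send the $x$-summand to the $f(x)$-summand via $F(f|_{X\setminus\{x\}})$. Your extra remarks on functoriality and on why the ``naive alternative'' fails are a welcome elaboration but do not change the substance.
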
 

\begin{proof}
For a finite set $U$, $\kring \langle 1 \rangle \conv F (U) = \bigoplus_{\substack{U' \subset U \\|U'|= |U|-1}} F(U')$. Given an injection of finite sets $i : U \hookrightarrow V$, and $U ' = U \backslash \{ x \}$, take $V' := V\backslash \{ i(x) \}$, so that $i$ induces $i': U ' \hookrightarrow X'$. This construction gives rise to the required
\[
\kring \langle 1 \rangle \conv F (U)
\rightarrow 
\kring \langle 1 \rangle \conv F (V).
\]
\end{proof}

This allows the statement of the following result, which is the basis for an inductive analysis of the functors $\kring \hfi (-, \mathbf{b})$:

\begin{prop}
\label{prop:dévissage}
For $b \in \nat$, there is a short exact sequence of $\finj$-modules with values in $\kring [\sym_{b-1}]$-modules:
\[
0
\rightarrow
\kring \langle 1 \rangle \conv \kring \hfi (-, \mathbf{b-1}) 
\rightarrow 
\kring \hfi (-, \mathbf{b})\downarrow ^{\sym_b}_{\sym_{b-1}}
\stackrel{\pi}{\rightarrow} 
\kring \hfi (-, \mathbf{b-1}) 
\rightarrow 
0
\]
where the surjection $\pi$ is the retract of the canonical inclusion 
$\kring \hfi (-, \mathbf{b-1}) \subset \kring \hfi (-, \mathbf{b})$ that sends a generator $[f]$, where $\mathrm{image} (f) \not \subset \mathbf{b}$, to zero.
\end{prop}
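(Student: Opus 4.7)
The plan is to define the projection $\pi$ explicitly on the basis, identify the kernel combinatorially, and then check that both maps are morphisms of $\finj$-modules.

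First I would set, for each $\mathbf{a}$, $\pi_{\mathbf{a}}([f]) := [f]$ if $\mathrm{image}(f) \subseteq \mathbf{b-1}$ and $\pi_{\mathbf{a}}([f]) := 0$ otherwise. This clearly splits the canonical inclusion $\kring\hfi(-,\mathbf{b-1}) \hookrightarrow \kring\hfi(-,\mathbf{b})$ and is $\sym_{b-1}$-equivariant, since the subgroup $\sym_{b-1} \subset \sym_b$ fixes $b$. For naturality in $\finj$ with respect to the transpose structure, I would check that, for $i: \mathbf{a}\hookrightarrow \mathbf{a'}$, both $\pi_{\mathbf{a'}} \circ \kring\hfi(i,\mathbf{b})\trans$ and $\kring\hfi(i,\mathbf{b-1})\trans \circ \pi_\mathbf{a}$ send $[f]$ to the sum $\sum [f']$ over injective extensions $f': \mathbf{a'}\hookrightarrow \mathbf{b-1}$ with $f' \circ i = f$; this sum is empty precisely when $b \in \mathrm{image}(f)$, since any extension of such $f$ inherits $b$ in its image.

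Next I would identify the kernel. By construction, $\ker(\pi_\mathbf{a})$ has $\kring$-basis $\{[f] \mid b \in \mathrm{image}(f)\}$, and the assignment $f \mapsto (x, f^\flat)$ with $x := f^{-1}(b)$ and $f^\flat := f|_{\mathbf{a}\setminus\{x\}}$ furnishes a $\kring$-linear isomorphism
\[
\ker(\pi_\mathbf{a}) \;\stackrel{\cong}{\longrightarrow}\; \bigoplus_{x \in \mathbf{a}} \kring\hfi(\mathbf{a}\setminus\{x\}, \mathbf{b-1}) \;=\; \bigl(\kring\langle 1\rangle \conv \kring\hfi(-,\mathbf{b-1})\bigr)(\mathbf{a}).
\]
The $\sym_{b-1}$-equivariance is immediate, since for $\sigma \in \sym_{b-1}$ one has $(\sigma \circ f)^{-1}(b) = x$ and $(\sigma \circ f)^\flat = \sigma \circ f^\flat$.

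It remains to verify that this identification is $\finj$-natural, using the $\finj$-structure on the convolution described in the preceding lemma. Given $i: \mathbf{a}\hookrightarrow \mathbf{a'}$ and $[f]$ with $f^{-1}(b) = x$, every extension $f': \mathbf{a'}\hookrightarrow\mathbf{b}$ of $f$ automatically satisfies $f'(i(x)) = b$, so lies in the summand indexed by $i(x)$ on the convolution side. Moreover, $f' \mapsto f'^\flat$ restricts to a bijection between such $f'$ and the extensions $g:\mathbf{a'}\setminus\{i(x)\}\hookrightarrow \mathbf{b-1}$ of $f^\flat$ along the induced injection $i': \mathbf{a}\setminus\{x\}\hookrightarrow \mathbf{a'}\setminus\{i(x)\}$. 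This is precisely the recipe $(x, [f^\flat]) \mapsto (i(x), \kring\hfi(i',\mathbf{b-1})\trans[f^\flat])$ prescribed by the preceding lemma.

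I do not expect a serious obstacle here: the argument is essentially bookkeeping. The only point requiring care is the matching of the single convolution summand indexed by $i(x)$ with the image on the injection side, together with the observation that all other summands are untouched, exactly as dictated by the $\finj$-structure from the preceding lemma.
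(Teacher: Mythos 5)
Your proposal is correct and follows essentially the same approach as the paper: define the splitting $\pi$, verify it is a morphism of $\finj$-modules (with $\sym_{b-1}$-equivariance), and identify the kernel with $\kring\langle 1\rangle \conv \kring\hfi(-,\mathbf{b-1})$ by tracking the element $f^{-1}(b)$. The only difference is that you spell out the naturality verifications the paper describes as "elementary, but important" and leaves to the reader.
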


\begin{proof}
The key point is that $\pi$ is a morphism of $\finj$-modules. This is an elementary, but important verification. That it is $\sym_{b-1}$-equivariant is clear.

To complete the proof, it remains to identify the kernel; this follows from the observation that, given $W \subset \mathbf{a}$ with $|W|= a-1$, there is an isomorphism
\[
\hfi (W, \mathbf{b-1}) 
\cong 
\hfi (\mathbf{a}, \mathbf{b}) 
\]
given by sending the element of $\mathbf{a} \backslash W$ to $b \in \mathbf{b}$.  Conversely, given $f \in \hfi (\mathbf{a}, \mathbf{b})$ such that $\mathrm{image} f \not \subset \mathbf{b-1}$, taking $W:= \mathbf{a} \backslash f^{-1} (b)$, $f$ is the image of $f|_W$.

Putting these points together, one obtains the result. 
\end{proof}

Applying $H^\finj_*$ to the short exact sequence of Proposition \ref{prop:dévissage} gives a long exact sequence:
\begin{eqnarray*}
\ldots 
\rightarrow
\kring \langle 1 \rangle \conv H^\finj_n \kring \hfi (-, \mathbf{b-1}) 
\rightarrow 
H^\finj_n \kring \hfi (-, \mathbf{b})\downarrow ^{\sym_b}_{\sym_{b-1}}
{\rightarrow} 
H^\finj_n \kring \hfi (-, \mathbf{b-1}) 
\rightarrow 
\\
\kring \langle 1 \rangle \conv  H^\finj_{n-1} \kring \hfi (-, \mathbf{b-1}) 
\rightarrow
\ldots
\end{eqnarray*}
using that $\kring \langle 1 \rangle \conv -$ commutes with the formation of $H^\finj_*$.

The above long exact sequence can be used to analyse $H^\finj_* \kring \hfi (-, \mathbf{b})$, by increasing induction on $b$. The difficulty is that, in order to prove the conjecture,  one requires to show that the connecting morphisms are all as non-trivial as is possible. 

It is possible to quantify exactly what is required (this is based on the explicit description of the $\crit (-,-)$ given by Proposition \ref{prop:crit}). However, this requires non-trivial input which should be expected to be as difficult as proving Theorem \ref{thm:coker}.

\begin{rem}
The above also provides an alternative strategy for proving Theorem \ref{thm:coker}, reducing to establishing the non-triviality of the first connecting morphism. Taking into account the addition structure introduced in \cite{P_wall} could facilitate this approach.
\end{rem}

\appendix
\section{The Koszul complex and Schur functors}
\label{sect:kz_schur}

The purpose of this Section is to identify the complex of Schur (bi)functors that is associated to the Koszul complex considered in Section \ref{sect:homology} in the specific cases of interest.

The passage to Schur functors is part of the  Schur-Weyl correspondence (see \cite[Chapter 4]{MR2522486} for example), which is a powerful tool when considering representations of the symmetric groups, especially when working over $\kring$  a field of characteristic zero (see \cite{2012arXiv1209.5122S}, for example).

\subsection{From $\fbmod$ to Schur functors}

For $G \in \ob \fbmod$, the associated Schur functor (from $\fvs$ to $\vs$, where $\vs$ is the category of $\kring$-vector spaces and $\fvs$ the full subcategory of finite-dimensional spaces) is given by 
\[
G(V):= \bigoplus_{n\in \nat} V^{\otimes n} \otimes_{\sym_n} G(\mathbf{n}),
\]
where $\sym_n$ acts by place permutations of tensor factors on $V^{\otimes n}$. 

\begin{exam}
\label{exam:schur_simple}
If $\lambda \vdash n$ and $S^\lambda$ is the associated simple representation of $\sym_n$, since $\kring$ is a field of characteristic zero, $S^\lambda (V)$ is the usual Schur functor associated to the partition $\lambda$. It is a  simple functor. 
\end{exam}

As is well-known (see \cite{2012arXiv1209.5122S}, for example), 
 this construction is symmetric monoidal with respect to the convolution product on $\fbmod$ and the pointwise tensor product of functors:

\begin{prop}
\label{prop:schur_sym_mon}
For $G_1 , G_2 \in \ob \fbmod$, there is a natural isomorphism of functors:
\[
(G_1 \conv G_2) (V) 
\cong 
G_1 (V) \otimes G_2 (V).
\]
\end{prop}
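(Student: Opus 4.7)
The plan is to compute both sides by grouping terms according to total tensor degree. First I would expand the right-hand side as
\[
G_1(V) \otimes G_2(V)
= \bigoplus_{p,q \in \nat} \bigl(V^{\otimes p} \otimes_{\sym_p} G_1(\mathbf{p})\bigr) \otimes \bigl(V^{\otimes q} \otimes_{\sym_q} G_2(\mathbf{q})\bigr),
\]
and reshuffle tensor factors (using that $\kring$ is a field, so tensor products commute with colimits) to rewrite each summand as
\[
V^{\otimes (p+q)} \otimes_{\sym_p \times \sym_q} \bigl(G_1(\mathbf{p}) \otimes G_2(\mathbf{q})\bigr),
\]
where $\sym_p \times \sym_q$ acts on $V^{\otimes(p+q)}$ via the block inclusion $\sym_p \times \sym_q \hookrightarrow \sym_{p+q}$ (permuting the first $p$ and last $q$ tensor factors separately).

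Next I would apply the standard induction--coinvariants adjunction: for any $\sym_p \times \sym_q$-module $M$, there is a canonical isomorphism
\[
V^{\otimes (p+q)} \otimes_{\sym_p \times \sym_q} M
\cong
V^{\otimes (p+q)} \otimes_{\sym_{p+q}} \bigl(M \uparrow_{\sym_p \times \sym_q}^{\sym_{p+q}}\bigr),
\]
since the $\sym_{p+q}$-action on $V^{\otimes(p+q)}$ by place permutation restricts along the Young subgroup to the given action. Regrouping the double sum over $(p,q)$ by $n := p+q$, the right-hand side becomes
\[
\bigoplus_{n\in \nat} V^{\otimes n} \otimes_{\sym_n} \Bigl(\,\bigoplus_{p+q=n} \bigl(G_1(\mathbf{p}) \otimes G_2(\mathbf{q})\bigr) \uparrow_{\sym_p \times \sym_q}^{\sym_n}\Bigr).
\]

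To finish, I would identify the inner bracket with $(G_1 \conv G_2)(\mathbf{n})$. By definition, $(G_1 \conv G_2)(\mathbf{n}) = \bigoplus_{\mathbf{n} = X_1 \amalg X_2} G_1(X_1) \otimes G_2(X_2)$; the $\sym_n$-action permutes summands according to the way it permutes ordered decompositions of $\mathbf{n}$. The $\sym_n$-orbits of ordered decompositions $(X_1,X_2)$ are indexed by $p = |X_1| \in \{0,\ldots,n\}$, and the stabilizer of the standard decomposition $(\mathbf{p}, \mathbf{n}\setminus\mathbf{p})$ is precisely $\sym_p \times \sym_q$. Hence the summand of cardinality type $(p,q)$ is canonically $\bigl(G_1(\mathbf{p}) \otimes G_2(\mathbf{q})\bigr)\uparrow_{\sym_p \times \sym_q}^{\sym_n}$, giving the desired identification. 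Naturality in $V$ is immediate, and naturality in $(G_1, G_2)$ follows because all identifications used are functorial in the $\fb$-modules.

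The proof is essentially a bookkeeping exercise; the only delicate step is the identification of orbits of ordered decompositions with their stabilizers, which is where the symmetric monoidal structure of convolution on $\fbmod$ really meets the tensor product of functors. I expect no genuine obstacle, since each step is a standard manipulation of induction, coinvariants, and direct sums.
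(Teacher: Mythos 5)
Your proof is correct. The paper does not actually give a proof of this proposition: it is stated as well known, with a pointer to the Sam--Snowden reference, so there is no in-paper argument to compare against. Your argument is the standard one (expand both sides, reassociate and use $(A\otimes_R M)\otimes(B\otimes_S N)\cong (A\otimes B)\otimes_{R\otimes S}(M\otimes N)$, pass from $\sym_p\times\sym_q$-coinvariants to $\sym_{p+q}$-coinvariants via induction, and identify $\bigoplus_{p+q=n}\bigl(G_1(\mathbf{p})\otimes G_2(\mathbf{q})\bigr)\uparrow_{\sym_p\times\sym_q}^{\sym_n}$ with the orbit decomposition of $(G_1\conv G_2)(\mathbf{n})$), and it is a faithful filling-in of the details the paper leaves implicit. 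One small remark: the appeal to $\kring$ being a field in the reshuffling step is unnecessary, since $-\otimes_R M$ is a left adjoint and therefore commutes with all colimits over any commutative ring; the isomorphism you need holds without any hypothesis on $\kring$.
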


The Schur functor construction clearly passes to the bivariant case, namely functors from $\fb\op \times \fb$ (or, equivalently, $\fb \times \fb$) to $\vs$. For example, for a left $\sym_a\op \times \sym_b$-module, $	M$,  the Schur bifunctor is 
\[
(V, W) \mapsto W^{\otimes b}\otimes _{\sym_b} M \otimes _{\sym_a} V^{\otimes a}
\]
for $(V, W) \in (\fvs) ^{\times 2}$.  

\begin{exam}
\label{exam:sym_a_bischur}
Take $b=a$ and consider $\kring \sym_a$ as a $\sym_a$-bimodule with respect to the regular structures. Then the associated Schur bifunctor is 
\[
W^{\otimes a} \otimes _{\sym_a} \kring \sym_a \otimes_{\sym_a} V^{\otimes a}.
\]
This is isomorphic to $W^{\otimes a} \otimes _{\sym_a}  V^{\otimes a}$ and hence to the bifunctor $S^a(W \otimes V)$, where $S^a (-)$ denotes the $a$th symmetric product functor $S^a (Z):=  Z^{\otimes a}/\sym_a$ for $Z \in \ob \fvs$.
\end{exam}

This example extends to give:

\begin{prop}
\label{prop:Schur_bif_ab}
For $a\leq b \in \nat$, the Schur bifunctor associated to the $\sym_a\op \times \sym_b$-module $\kring \hfi (\mathbf{a},\mathbf{b}) $ is 
naturally isomorphic to
\[
S^{b-a} (W) \otimes S^a (W \otimes V).
\] 
\end{prop}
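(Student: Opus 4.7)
My plan is to read off the bifunctor from the explicit description of $\kring \hfi(\mathbf{a}, \mathbf{b})$ as a transitive bimodule, and then use the standard identifications between tensor coinvariants and symmetric/Schur constructions.

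First I would invoke Lemma \ref{lem:perm} to identify $\kring \hfi (\mathbf{a}, \mathbf{b})$ as the $\sym_a\op \times \sym_b$-module
\[
\kring \hfi (\mathbf{a}, \mathbf{b}) \cong \kring[\sym_b]\otimes_{\kring[\sym_{b-a}]} \kring,
\]
on which $\sym_b$ acts by left multiplication and $\sym_a$ acts by right multiplication via the Young subgroup inclusion $\sym_a \subset \sym_b$ complementary to $\sym_{b-a}$ (the $\sym_a$ and $\sym_{b-a}$ actions commute, so the quotient inherits a well-defined right $\sym_a$-action). Substituting this into the Schur bifunctor formula and using associativity of the tensor product yields
\[
W^{\otimes b} \otimes_{\sym_b} \kring \hfi(\mathbf{a}, \mathbf{b}) \otimes_{\sym_a} V^{\otimes a}
\cong W^{\otimes b} \otimes_{\sym_{b-a} \times \sym_a} V^{\otimes a},
\]
where $\sym_{b-a} \times \sym_a$ acts on $W^{\otimes b}$ as the Young subgroup (permuting the first $b-a$ tensor factors and the last $a$ tensor factors respectively), $\sym_a$ acts on $V^{\otimes a}$ by place permutations, and $\sym_{b-a}$ acts trivially on $V^{\otimes a}$.

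Next I would factor $W^{\otimes b} = W^{\otimes(b-a)} \otimes W^{\otimes a}$ as a $\sym_{b-a}\times \sym_a$-module; since the two group actions act on disjoint factors and the $\sym_{b-a}$-action is trivial on $V^{\otimes a}$, the coinvariants split as
\[
\bigl( W^{\otimes (b-a)} \otimes_{\sym_{b-a}} \kring\bigr) \otimes \bigl(W^{\otimes a} \otimes_{\sym_a} V^{\otimes a}\bigr).
\]
The first factor is, by definition, $S^{b-a}(W)$. For the second factor, the standard natural isomorphism $W^{\otimes a} \otimes V^{\otimes a} \cong (W \otimes V)^{\otimes a}$ is $\sym_a$-equivariant (where on the right $\sym_a$ permutes the $a$ tensor factors diagonally), so passing to $\sym_a$-coinvariants gives $S^a(W \otimes V)$. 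Combining these identifications produces the claimed natural isomorphism.

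The argument is essentially bookkeeping of group actions, so I do not expect a genuine obstacle; the only point requiring care is verifying that the $\sym_a$-action inherited from the right $\sym_a$-action on $\sym_b/\sym_{b-a}$ (Lemma \ref{lem:perm}) corresponds exactly, after the tensor reshuffling $W^{\otimes a} \otimes V^{\otimes a} \cong (W \otimes V)^{\otimes a}$, to the diagonal place-permutation action on $(W \otimes V)^{\otimes a}$. This is the step one should write out with indices to make the naturality in $(V, W)$ transparent, and is a direct special case of Example \ref{exam:sym_a_bischur}, which already handles the $b=a$ situation.
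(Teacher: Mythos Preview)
Your proposal is correct and follows essentially the same approach as the paper: invoke Lemma \ref{lem:perm} to rewrite $\kring \hfi(\mathbf{a},\mathbf{b})$ as the permutation bimodule $\kring[\sym_b/\sym_{b-a}]$, then extend the computation of Example \ref{exam:sym_a_bischur}. The paper's proof is just a two-line sketch pointing to that example, whereas you have spelled out the tensor bookkeeping explicitly; the only cosmetic discrepancy is the ordering of the Young subgroup factors (in the paper $\sym_a$ acts on $\{1,\ldots,a\}$ and $\sym_{b-a}$ on $\{a+1,\ldots,b\}$), which does not affect the argument.
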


\begin{proof}
By Lemma \ref{lem:perm}, the $\sym_a\op \times \sym_b$-module $\kring \hfi (\mathbf{a},\mathbf{b}) $ is isomorphic to the permutation bimodule on $\sym_b/ \sym_{b-a}$. 
The result follows by a straightforward extension of the argument outlined in Example \ref{exam:sym_a_bischur}. 
\end{proof}

\subsection{Revisiting the Koszul complex}
\label{subsect:revisit_kz}

The Schur bifunctor construction applies to the Koszul complex $\kz_\bullet \kring \hfi (-,-)$ since, for fixed $a, b \in \nat$, this gives a complex of $\sym_a\op \times \sym_b$-modules and hence a natural complex in bifunctors  $(V,W)\mapsto \kz_\bullet \kring \hfi (-,-)(V,W)$, considering all $a$, $b$ at once. 

\begin{rem}
The natural numbers $a$ and $b$ can be recovered respectively as the polynomial degree with respect to $V$ and the polynomial degree with respect to $b$ (compare Proposition \ref{prop:Schur_bif_ab}.
\end{rem}

\begin{thm}
\label{thm:schur_koszul}
There is an isomorphism of complexes
\[
(\kz_\bullet \kring \hfi (-,-)(V,W) ,d)
\cong 
(S^* (W) \otimes \Lambda^* (V) \otimes S^* (W \otimes V), d)
\]
with Koszul-type differential from homological degree $n+1$ to $n$ given in `polynomial bidegree' $(a,b)$ as the composite
\begin{eqnarray*}
S^{b-a +n+1}  (W) \otimes \Lambda^{n+1} (V) \otimes S^{a-n-1} (W \otimes V)
 &\rightarrow &
 S^{b-a +n}  (W) \otimes \Lambda^{n} (V) \otimes (W \otimes V)\otimes  S^{a-n-1} (W \otimes V)
\\
 &\rightarrow & 
S^{b-a +n}  (W) \otimes \Lambda^{n} (V) \otimes  S^{a-n} (W \otimes V),
\end{eqnarray*}
where the first map is induced by the coproducts 
\begin{eqnarray*}
S^*(W) &\rightarrow& S^{*-1}(W) \otimes W 
\\
\Lambda^* (V) &\rightarrow &  \Lambda^{*-1}(V) \otimes V
\end{eqnarray*}
 and the second by the product of the symmetric algebra $S^*(W \otimes V)$.

In particular, this is a complex in the category of $S^*(W \otimes V)$-modules.
\end{thm}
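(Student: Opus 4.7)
The strategy is to transport everything to the Schur (bi)functor world and then identify the pieces one at a time. For the underlying graded object, one applies the Schur bifunctor construction termwise to the Koszul complex. By definition, in homological degree $n$, the $\fb$-module $\kz_n \kring\hfi(-,\mathbf{b}) = \ori \conv \kring\hfi(-,\mathbf{b})$ restricted to the degree-$n$ part of $\ori$ convolved with $\kring\hfi(\mathbf{a-n},\mathbf{b})$. The Schur functor associated to the degree-$n$ part of $\ori$ is, by construction, $\Lambda^n(V)$ (since $\ori(\mathbf{n})$ is the sign representation). By Proposition \ref{prop:Schur_bif_ab}, the Schur bifunctor of $\kring\hfi(\mathbf{a-n},\mathbf{b})$ is $S^{b-a+n}(W) \otimes S^{a-n}(W \otimes V)$. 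Proposition \ref{prop:schur_sym_mon} turns the convolution $\conv$ into the ordinary tensor product of functors, yielding $\Lambda^n(V) \otimes S^{b-a+n}(W) \otimes S^{a-n}(W\otimes V)$ as claimed.

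Having identified the underlying graded object, the task reduces to tracing through the differential. The Koszul differential in homological degree $n+1 \to n$ is assembled (up to sign) out of the $\finj$-structure maps $\kring\hfi(\iota_{a-n-1,a-n}, \mathbf{b})\trans$, one for each way of removing an element from $\mathbf{n+1}$ inside $\ori$; the alternating sum is absorbed into the sign representation. Lemma \ref{lem:iota_a-1_a} provides the explicit formula for each such structure map: a basis element $[f] \in \kring\hfi(\mathbf{a-n-1},\mathbf{b})$ is sent to $\sum_{z} (a-n,z) [\iota_{a-n-1,a-n}\circ\text{extension}]$, the sum ranging over elements of $\mathbf{b}$ not already in the image. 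Under the Schur functor dictionary of Proposition \ref{prop:Schur_bif_ab}, this operation reads: take one generator of the $S^{b-a+n+1}(W)$ factor and one generator of the $\Lambda^{n+1}(V)$ factor, and couple them into a new generator of $S^{a-n}(W \otimes V)$ via the algebra product. This is precisely the composite displayed in the statement: the coproducts $S^*(W) \to S^{*-1}(W) \otimes W$ and $\Lambda^*(V) \to \Lambda^{*-1}(V) \otimes V$ extract the two factors, and the multiplication in $S^*(W \otimes V)$ pairs them.

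The main obstacle will be a careful bookkeeping check that signs and combinatorial coefficients match: one must verify that the sum over $z \in \mathbf{b}\setminus\mathbf{a-n-1}$ in Lemma \ref{lem:iota_a-1_a} corresponds exactly to the coproduct $S^{b-a+n+1}(W) \to S^{b-a+n}(W)\otimes W$ (coefficient issues can be controlled by working with divided powers or, equivalently, using the characteristic-zero identification of $S^*$ with the symmetric algebra, so that the coproduct is the usual polarization), and that the alternating sum implicit in the Koszul differential assembled from $\ori$ is absorbed into the coproduct on $\Lambda^*(V)$. Once the formula for the differential is in place, the final assertion is immediate: the differential touches only the $S^*(W)$ and $\Lambda^*(V)$ tensor factors on the coproduct side and feeds into $S^*(W\otimes V)$ via multiplication, so $S^*(W\otimes V)$-linearity follows from associativity of that multiplication.
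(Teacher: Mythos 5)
Your proposal follows the paper's own (very terse) proof: identify the terms via the Schur bifunctor of $\ori$ (giving $\Lambda^*(V)$), Proposition \ref{prop:Schur_bif_ab}, and Proposition \ref{prop:schur_sym_mon}, then translate the differential. You supply the detail the paper compresses into ``simply a translation,'' correctly tracing how the coproducts on $S^*(W)$ and $\Lambda^*(V)$ and the multiplication in $S^*(W\otimes V)$ arise from Lemma \ref{lem:iota_a-1_a} and the removal of a wedge factor, so the argument is sound and in essence the same.
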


\begin{proof}
The explicit identification of the terms in $\kz_\bullet \kring \hfi (-,-)(V,W)$ follows 
from the definition of $\kz_\bullet \kring \hfi (-,-)$, the identification of the Schur functor associated with the orientation module $\ori$ as the functor $V \mapsto \Lambda^* (V)$, the exterior algebra on $V$, together with Proposition \ref{prop:schur_sym_mon} to treat the convolution product. 

The Koszul-type differential is simply a translation of the explicit differential in $(\kz_\bullet \kring \hfi (-,-),d)$ in terms of the Schur bifunctors. 
\end{proof}

\begin{rem}
Working over a field of characteristic zero, the Koszul complex $(\kz_\bullet \kring \hfi (-,-),d)$ is determined by the description of the Schur bifunctor given in Theorem \ref{thm:schur_koszul}.
\end{rem}

Theorem \ref{thm:schur_koszul} contains further important information on the $\finj$-homology of $\kring \hfi (-,-)\trans$ that has been omitted in the body of the text:

\begin{cor}
\label{cor:module}
The homology of $(\kz_\bullet \kring \hfi (-,-)(V,W) ,d)$ takes values in the category of $S^* (W \otimes V)$-modules. 
\end{cor}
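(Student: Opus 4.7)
The plan is to derive Corollary \ref{cor:module} directly from the final clause of Theorem \ref{thm:schur_koszul}, which asserts that $(\kz_\bullet \kring \hfi (-,-)(V,W), d)$ is a complex in the category of $S^*(W \otimes V)$-modules. Once this is granted, the corollary is an instance of the elementary principle that the homology of a chain complex of modules over a (commutative) ring inherits a module structure: a differential that commutes with the action preserves both cycles and boundaries, and therefore descends to the quotient.

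To supply the one-step verification behind the clause invoked above, I would equip each term $S^{b-a+n}(W) \otimes \Lambda^n(V) \otimes S^{a-n}(W \otimes V)$ with the $S^*(W \otimes V)$-module structure given by multiplication on the rightmost tensor factor, using the commutative algebra structure on $S^*(W \otimes V)$. The Koszul-type differential in Theorem \ref{thm:schur_koszul} is the composite of two maps: the first applies the coproducts on $S^*(W)$ and $\Lambda^*(V)$ to produce an auxiliary $(W \otimes V)$-factor, leaving the rightmost factor untouched, and is therefore trivially $S^*(W \otimes V)$-linear; the second multiplies this auxiliary factor into the rightmost $S^*(W \otimes V)$-factor using the algebra product. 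Because $S^*(W \otimes V)$ is commutative, multiplication by any fixed element is an $S^*(W \otimes V)$-module homomorphism, so the second map is also $S^*(W \otimes V)$-linear. The composite is thus $S^*(W \otimes V)$-linear, as required.

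With $d$ an $S^*(W \otimes V)$-module map in each bidegree, the submodules $\ker d$ and $\mathrm{im}\, d$ inside each $\kz_n \kring \hfi(-,-)(V,W)$ are $S^*(W \otimes V)$-submodules, and hence their quotient---i.e., the homology in degree $n$---naturally carries an $S^*(W \otimes V)$-module structure. Since all constructions are functorial in $(V,W) \in \fvs^{\times 2}$, this module structure is natural, yielding the stated refinement of the $\finj$-homology of $\kring \hfi (-,\mathbf{b})\trans$. I do not anticipate any real obstacle here: the content of the corollary is simply to make explicit an observation already packaged in Theorem \ref{thm:schur_koszul}.
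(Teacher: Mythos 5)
Your proposal is correct and follows exactly the route the paper takes: Corollary~\ref{cor:module} is an immediate consequence of the final clause of Theorem~\ref{thm:schur_koszul}, together with the standard fact that the homology of a complex of modules over a commutative ring is again a module, since a linear differential preserves cycles and boundaries. One small imprecision: the second map in the differential is not ``multiplication by a fixed element'' (the $W\otimes V$-factor produced by the coproduct varies), but the conclusion is unaffected --- the point is that the multiplication $(W\otimes V)\otimes S^{a-n-1}(W\otimes V)\to S^{a-n}(W\otimes V)$ is $S^*(W\otimes V)$-linear in the second slot precisely because $S^*(W\otimes V)$ is commutative, which is what you invoke.
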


\begin{rem}
One of the contributions of  \cite{P_wall} is to present a natural categorical framework for this structure that does not require passage to Schur bifunctors. 
\end{rem}

\subsection{Flipping the complex}

Corollary \ref{cor:dagger} highlighted a duality of $\crit (-,-)$ under $^\dagger$. 
A form of this already holds at the level of the Koszul complexes. 

To see this it is useful to rewrite the Koszul complex in the following way:
\[
(S^* (W) \otimes S^* (sV) \otimes S^* (W \otimes V), d),
\]
where $sV$ denotes $V$ concentrated in homological degree one and $S^* (-)$ is defined using Koszul signs in the category of graded vector spaces. 

Now, consider the following substitution:
\begin{eqnarray*}
W&:=& s X \\
V&:= & s^{-1} Y,
\end{eqnarray*}
where $X$, $Y$ are $\kring$-vector spaces, so that $W$ is in homological degree one and $V$ in homological degree $-1$. 

This yields the complex:
\[
(S^* (sX) \otimes S^* (Y) \otimes S^* (X \otimes Y), d),
\]
which identifies with the original Koszul complex, with the rôles of $X$, $Y$ reversed.


\providecommand{\bysame}{\leavevmode\hbox to3em{\hrulefill}\thinspace}
\providecommand{\MR}{\relax\ifhmode\unskip\space\fi MR }
\providecommand{\MRhref}[2]{%
  \href{http://www.ams.org/mathscinet-getitem?mr=#1}{#2}
}
\providecommand{\href}[2]{#2}

\end{document}